\documentclass[ejsv2]{imsart}
\usepackage[table]{xcolor}
\usepackage{pgfmath}

%% Packages
%\RequirePackage[numbers]{natbib}
\RequirePackage[authoryear, sort&compress]{natbib}%% uncomment this for author-year citations
\RequirePackage[colorlinks,citecolor=blue,urlcolor=blue]{hyperref}

% \usepackage{amsmath}
% %\usepackage{amsthm}
% \usepackage{graphics}
% \usepackage{epsfig}
% %usepackage{amssymb}
% \usepackage{enumitem}
% \usepackage{url}
% \usepackage{hyperref}
 \usepackage{multirow}
 \usepackage{pifont}
 \allowdisplaybreaks

\RequirePackage{graphicx}

 \RequirePackage{enumitem}

%\arxiv{2010.00000}
\startlocaldefs
%%%%%%%%%%%%%%%%%%%%%%%%%%%%%%%%%%%%%%%%%%%%%%
%%                                          %%
%% Uncomment next line to change            %%
%% the type of equation numbering           %%
%%                                          %%
%%%%%%%%%%%%%%%%%%%%%%%%%%%%%%%%%%%%%%%%%%%%%%
\numberwithin{equation}{section}
%%%%%%%%%%%%%%%%%%%%%%%%%%%%%%%%%%%%%%%%%%%%%%
%%                                          %%
%% For Axiom, Claim, Corollary, Hypothesis, %%
%% Lemma, Theorem, Proposition              %%
%% use \theoremstyle{plain}                 %%
%%                                          %%
%%%%%%%%%%%%%%%%%%%%%%%%%%%%%%%%%%%%%%%%%%%%%%
\theoremstyle{plain}

\newtheorem{theorem}{Theorem}[section]
\newtheorem{lemma}[theorem]{Lemma}
\newtheorem{proposition}[theorem]{Proposition}
\newtheorem{corollary}[theorem]{Corollary}
%%%%%%%%%%%%%%%%%%%%%%%%%%%%%%%%%%%%%%%%%%%%%%
%%                                          %%
%% For Assumption, Definition, Example,     %%
%% Notation, Property, Remark, Fact         %%
%% use \theoremstyle{definition}            %%
%%                                          %%
%%%%%%%%%%%%%%%%%%%%%%%%%%%%%%%%%%%%%%%%%%%%%%
\theoremstyle{definition}

\newtheorem*{assumption}{Assumption}
\newtheorem{remark}{Remark}
%%%%%%%%%%%%%%%%%%%%%%%%%%%%%%%%%%%%%%%%%%%%%%
%%                                          %%
%% For Case use \theoremstyle{remark}       %%
%%                                          %%
%%%%%%%%%%%%%%%%%%%%%%%%%%%%%%%%%%%%%%%%%%%%%%
\theoremstyle{remark}

%%%%%%%%%%%%%%%%%%%%%%%%%%%%%%%%%%%%%%%%%%%%%%
%% Please put your definitions here:        %%
%%%%%%%%%%%%%%%%%%%%%%%%%%%%%%%%%%%%%%%%%%%%%%
% Automatische Zellenfärbung unter H_0
\newcommand{\colorcell}[1]{%
  \pgfmathparse{#1}%
  \ifdim\pgfmathresult pt<0.03pt
    \cellcolor{red!20}{#1}%
  \else
    \ifdim\pgfmathresult pt>0.08pt
      \cellcolor{red!20}{#1}%
    \else
      \ifdim\pgfmathresult pt>0.07pt
        \ifdim\pgfmathresult pt<0.0801pt
          \cellcolor{yellow!30}{#1}%
        \else
          #1%
        \fi
      \else
        \ifdim\pgfmathresult pt>0.029pt
          \ifdim\pgfmathresult pt<0.0701pt
            \cellcolor{green!20}{#1}%
          \else
            #1%
          \fi
        \else
          #1%
        \fi
      \fi
    \fi
  \fi
}

% Automatische Zellenfärbung unter H_1
\newcommand{\colorcellPower}[1]{%
  \pgfmathparse{#1}%
  \ifdim\pgfmathresult pt<0.30pt
    \cellcolor{red!30}{#1}%
  \else
    \ifdim\pgfmathresult pt<0.70pt
      \cellcolor{yellow!30}{#1}%
    \else
      \cellcolor{green!30}{#1}%
    \fi
  \fi
}

\newcommand{\R}{\mathbb{R}}
\newcommand{\lb}{\left(}
\newcommand{\rb}{\right)}
\newcommand{\Var}{\operatorname{Var}}
\newcommand{\cov}{\operatorname{cov}}
\newcommand{\E}{\mathbb{E}}
\newcommand{\N}{\mathbb{N}}

\newcommand{\bfSigma}{\mathbf{\Sigma}}

\newcommand{\bfy}{\mathbf{y}}

\newcommand{\cond}{\stackrel{\mathcal{D}}{\to}}
\newcommand{\conp}{\stackrel{\mathbb{P}}{\to}}
\newcommand{\tr}{\operatorname{tr}}

\newcommand{\PR}{\mathbb{P}}

\newcommand{\bfx}{\mathbf{x}}

\newcommand{\bfA}{\mathbf{A}}
\newcommand{\bfB}{\mathbf{B}}
\newcommand{\bfC}{\mathbf{C}}

\newcommand{\bfu}{\mathbf{u}}

\endlocaldefs

\begin{document}
\begin{frontmatter}
\title{Two-Sample Covariance Inference in High-Dimensional Elliptical Models
}
\runtitle{Two-Sample Covariance Inference in High-Dimensional Elliptical Models}

\begin{aug}
%%%%%%%%%%%%%%%%%%%%%%%%%%%%%%%%%%%%%%%%%%%%%%%
%% Only one address is permitted per author. %%
%% Only division, organization and e-mail is %%
%% included in the address.                  %%
%% Additional information can be included in %%
%% the Acknowledgments section if necessary. %%
%% ORCID can be inserted by command:         %%
%% \orcid{0000-0000-0000-0000}               %%
%%%%%%%%%%%%%%%%%%%%%%%%%%%%%%%%%%%%%%%%%%%%%%%
\author[A]{\fnms{Nina}~\snm{Dörnemann}\ead[label=e1]{ndoernemann@math.au.dk}\orcid{0009-0003-0486-4383}}
% \author[B]{\fnms{Second}~\snm{Author}\ead[label=e2]{second@somewhere.com}\orcid{0000-0000-0000-0000}}
% \and
% \author[B]{\fnms{Third}~\snm{Author}\ead[label=e3]{third@somewhere.com}}
%%%%%%%%%%%%%%%%%%%%%%%%%%%%%%%%%%%%%%%%%%%%%%
%% Addresses                                %%
%%%%%%%%%%%%%%%%%%%%%%%%%%%%%%%%%%%%%%%%%%%%%%
\address[A]{Department of Mathematics,
Aarhus University\printead[presep={,\ }]{e1}}

% \address[B]{Department,
% University or Company Name\printead[presep={,\ }]{e2,e3}}
\runauthor{N. Dörnemann}
\end{aug}

\begin{abstract}
We propose a two-sample test for large-dimensional covariance matrices in generalized elliptical models. The test statistic is based on a U-statistic estimator of the squared Frobenius norm of the difference between the two population covariance matrices. This statistic was originally introduced by \cite{lichen2012} for the independent component model. As a key theoretical contribution, we establish a new central limit theorem for the U-statistics under elliptical data, valid under both the null and alternative hypotheses. 
This result enables asymptotic control of the test level and facilitates a power analysis. To the best of our knowledge, the proposed test is the first such method to be supported by theoretical guarantees for elliptical data. Our approach imposes only mild assumptions on the covariance matrices and does neither require sparsity nor explicit growth conditions on the dimension-to-sample-size ratio.
We illustrate our theoretical findings through applications to both synthetic and real-world data.
\end{abstract}

\begin{keyword}[class=MSC]
\kwd[Primary ]{15A18}
\kwd{60F17}
\kwd[; secondary ]{62H15}
\end{keyword}

\begin{keyword}
\kwd{Central limit theorem}
\kwd{covariance matrix}
\kwd{elliptical models}
\kwd{high-dimensional statistics}
\kwd{hypothesis testing}
\end{keyword}

\end{frontmatter}
%%%%%%%%%%%%%%%%%%%%%%%%%%%%%%%%%%%%%%%%%%%%%%
%% Please use \tableofcontents for articles %%
%% with 50 pages and more                   %%
%%%%%%%%%%%%%%%%%%%%%%%%%%%%%%%%%%%%%%%%%%%%%%
%\tableofcontents

\section{Introduction}
The two-sample testing problem for large covariance matrices has attracted considerable attention in recent years, particularly in modern multivariate analysis, random matrix theory, and their applications \citep{ding2024two, zhang2022asymptotic, lichen2012, chenzhangzhong2010, jiang_yang_2013}. Given two independent samples of possibly different sizes with $p$-dimensional covariance matrices $\bfSigma_{n,1}$ and $\bfSigma_{n,2}$, respectively, the testing problem is formulated as
\begin{align} \label{eq_hypothesis}
    &\mathsf{H}_{0,n} : \bfSigma_{n,1} = \bfSigma_{n,2}
    \quad  \textnormal{vs.}  \quad 
    \mathsf{H}_{1,n} : \bfSigma_{n,1} \neq \bfSigma_{n,2} .
\end{align}

\noindent\textbf{Two influential models for high-dimensional data.}
Besides the challenges of high dimensionality, the data analyst rarely has certainty about the underlying data-generating model. 
Modern statistical methodology is therefore often accompanied by theoretical guarantees under general data-generating models that go well beyond the classical normal distribution. 
Among the most prominent classes of data-generating models are elliptical distributions and independent component models. To be precise, we say that a random vector $\bfx$ in $\R^p$ with mean vector $\boldsymbol{\mu}\in \R^p$ and covariance matrix $\bfSigma\in \R^{p\times p}$ follows an \textit{elliptical model (EM)} if it admits a representation of the form 
\begin{align} \label{eq_em}
    \bfx = \xi \bfSigma^{1/2} \bfu + \boldsymbol{\mu}, \tag{EM}
\end{align}
where $\bfu$ is uniformly distributed on the $p$-dimensional unit sphere and $\xi$ is a non-negative scalar random variable independent of $\bfu$ satisfying $\E [ \xi^p]=p.$ In a different modeling approach, $\bfx$
  follows an \textit{independent component model (ICM)}, where $\bfx$ is given by 
  \begin{align} \label{eq_icm}
      \bfx = \bfSigma^{1/2} \bfy + \boldsymbol{\mu} \tag{ICM}
  \end{align}
  for some random vector $\bfy$ in $\R^p$ with independent entries. 
Despite superficial similarities between EMs and ICMs, they different fundamentally in their modeling capacities with important consequences for application areas \citep{wang2023bootstrap}.
Most notably, elliptical models induce dependence across coordinates of the random vector $\xi \bfu$ via the common scaling factor $\xi$, whereas ICMs assume componentwise independence for $\bfy$. More information on advantages and applications of EMs can be found in \cite{gupta2013elliptically, mcneil2015quantitative, elkaroui2009, owen1983elliptical, kariya2014robustness}.

In applications, the data analyst can typically not be sure about the data-generating model. Recent work has begun to address this issue by proposing goodness-of-fit tests for high-dimensional normal data \citep{chen2023normality} and elliptical distributions \citep{wang2024testing}. However, applying such tests prior to the main data analysis raises the classical challenge of pretesting \citep{bancroft1944biases}. Standard remedies like sample splitting are particularly problematic in high-dimensional settings, as they further diminish the effective sample size relative to the dimension.
It is therefore desirable to avoid pretesting altogether and instead turn to procedures that remain valid and powerful across a broad range of data-generating mechanisms.

\noindent \textbf{Our contributions.} 
Our contributions to tackling the aforementioned challenges are threefold: 
\begin{enumerate}[label=(\roman*)]
    \item We revisit the influential test by \cite{lichen2012}, originally developed for ICMs, and establish asymptotic level control and power consistency under EMs. To the best of our knowledge, the proposed test is the first one for the two-sample problem \eqref{eq_hypothesis} in the model \eqref{eq_em} with theoretical guarantees. Moreover, together with the results of \cite{lichen2012}, this test is thus valid for both ICMs and EMs.
    \item The statistical methodology builds on our key theoretical contribution: a central limit theorem for U-statistics estimating the squared Frobenius norm of $\bfSigma_{n,1}-\bfSigma_{n,2}$ under EMs. Notably, the limiting variance generally differs from that in the ICM case (see Figure \ref{fig:histograms}), reflecting the typical challenges of extending results from independent component models to elliptical models.
    \item We conduct a comprehensive numerical study demonstrating that our method is consistently valid and powerful across a wide range of scenarios. In contrast, competing methods developed for ICMs show erratic performance under EMs, being reliable only in some cases but not others. Additionally, we analyze S\&P 500 stock return data, where our method is the only one to detect a difference.
\end{enumerate}

\noindent\textbf{Related work.}
Two-sample testing for high-dimensional covariance matrices has been extensively studied under the assumption that the data follows an ICM. For instance, \cite{zou2021, zheng2012central, zheng_et_al_2015} developed tests based on the eigenvalues of $F$-matrices which requires $p<n_1$ or $p<n_2.$ In this setting, likelihood-based approaches, such as the corrected likelihood ratio tests in \cite{baietal2009, jiang_yang_2013, jiang2015, dettedoernemann2020} for high-dimensional normal distributions have also been proposed, where the distributional assumption was weakened to ICMs in \cite{dornemann2023likelihood}.
The test proposed by \cite{zhang2022asymptotic} combines leading eigenvalues above the phase-transition threshold with linear spectral statistics, and requires equal sample sizes as well as an asymptotically proportional relationship between dimension and sample size for its theoretical guarantees.
The tests developed by \cite{srivastava2010testing} for normally distributed data, \cite{lichen2012} based on U-statistics estimating the squared Frobenius norm of $\bfSigma_{n,1} - \bfSigma_{n,2}$, \cite{cai2013two} for sparse settings
work under more flexible asymptotic regimes. 
\cite{yu2024fisher} Fisher-combined the tests by \cite{lichen2012} and \cite{cai2013two} to enhance power in both sparse and dense settings. 
 \cite{ding2024two} developed a test for the ultra-high dimensional regime where the dimension is much larger than both samples sizes.
In summary, all of these works provide theoretical guarantees under ICM-type models or even the normal distribution, see also Table \ref{table_method}. 
A recent contribution by \cite{li2025tests} considers shape matrices under both EM and ICM and again requires the dimension not to exceed at least one sample size. Note that shape matrices, unlike covariance matrices, are scale-invariant and encode only directional structure.

In recent years, several researchers have begun addressing the challenge of extending results from ICMs to EMs, making it an increasingly active area of research.
A few general references contribute to the understanding of high-dimensional covariance matrices generated by elliptical data. 
For instance, the fluctuations of linear spectral statistics were studied in \cite{hu_et_al_2019,yang_et_al_2021, zhang2022clt}, with a parametric bootstrap procedure provided by \cite{wang2023bootstrap}. Leading eigenvalues under EMs were considered in \cite{jun2022tracy, ding2023extreme}. 
 For most of these results, moving beyond the ICM setting, has historically required considerable time, highlighting the technical challenges involved. 
These challenges are also reflected in the fact that, compared to inference under ICMs, hypothesis testing in EMs remains much less touched.
 Initial progress has primarily focused on one-sample problems in EMs, including sphericity tests \citep{xu2024adjusted} and tests based on linear spectral statistics \citep{hu_et_al_2019, yang_et_al_2021}.
To the best of our knowledge, however, no two-sample test for \eqref{eq_hypothesis} with theoretical guarantees for EMs exists. This work aims to close this gap.

 \noindent\textbf{Overview.} The remainder of this paper is organized as follows. Section \ref{sec_method} introduces the statistical test. Section \ref{sec_theory} presents the model, assumptions, and theoretical guarantees under the null and alternative hypotheses. Section \ref{sec_numerics} investigates finite-sample performance using synthetic data and an application to S\&P 500 stock returns. The proofs of the main theoretical results are gathered in Appendix \ref{sec_proofs}, while additional technical lemmas are placed in Appendix \ref{sec_appendix_b}.

\section{A two-sample test for large-dimensional covariance matrices}

\subsection{Methodology} \label{sec_method}
We consider two independent samples $\bfx_1^{(i)}, \ldots, \bfx_{n_i}^{(i)}$, $i\in \{1,2\},$ with covariance matrices $\bfSigma_{n,1}$ and $\bfSigma_{n,2}$, respectively, and wish to test \eqref{eq_hypothesis}. 

\noindent\emph{Estimating the squared Frobenius norm.}
The proposed test is based on the squared Frobenius norm
\begin{align} \label{eq_decomp}
\| \bfSigma_{n,1} - \bfSigma_{n,2}\|_F^2 := 
    \tr \lb \bfSigma_{n,1} - \bfSigma_{n,2} \rb ^2
    = \tr \lb \bfSigma_{n,1}^2 \rb  + \tr \lb \bfSigma_{n,2}^2 \rb 
    - 2 \tr \lb \bfSigma_{n,1} \bfSigma_{n,2} \rb.
\end{align}
For this purpose, we estimate each term on the right hand side of \eqref{eq_decomp} individually. Define 
\begin{align} 
     U_{n,i} & = \frac{1}{ (n_i)_2}\sum_{1 \leq j , k \leq n_i}^\star \lb \bfx_j^{(i)^\top} \bfx_k^{(i)} \rb^2 
      -\frac{2}{(n_i)_3}\sum_{1 \leq j,k,l \leq n_i  }^{\star} \bfx_j^{(i)^\top}\bfx_k^{(i)}\bfx_k^{(i)^\top}\bfx_{l}^{(i)} \nonumber \\ & \quad 
       +\frac{1}{(n_i)_4}\sum_{1 \leq j, k , l, m \leq n_i }^{\star} \bfx_j^{(i)^\top}\bfx_k^{(i)}\bfx_{l}^{(i)^\top}\bfx_m^{(i)}
     , ~ \quad i \in \{1,2\}, 
     \label{eq_def_U}
\end{align}
 where $\sum^\star$ stands for the summation over pairwise distinct indices and $(n_i)_m:= n_i(n_i-1) \cdot \ldots \cdot (n-m+1)$ for $1 \leq m \leq n_i. $
Then, $U_{n,i}$ is an unbiased estimator for the second spectral moment $\tr \bfSigma_{n,i}^2$ of $\bfSigma_{n,i}$. As an unbiased estimator for $\tr \lb \bfSigma_{n,1} \bfSigma_{n,2} \rb$, we use $V_n,$ where
\begin{align}
    V_n & = \frac{1}{ n_1 n_2} \sum_{j=1}^{ n_1 } \sum_{k=1}^{n_2 } \lb \bfx_i^{(1)^\top} \bfx_j^{(2)} \rb^2
    - \frac{1}{n_1 n_2 (n_1 -1)} \sum_{1 \leq k,l \leq n_1}^\star \sum_{j=1}^{n_2} \bfx_l^{(1)^\top} \bfx_j^{(2)} \bfx_j^{(2)^\top} \bfx_k^{(1)} \nonumber \\
    & \quad  -   \frac{1}{n_2 n_1 (n_2 -1)} \sum_{1 \leq k,l \leq n_2}^\star \sum_{j=1}^{n_1} \bfx_l^{(2)^\top} \bfx_j^{(1)} \bfx_j^{(1)^\top} \bfx_k^{(2)} \nonumber \\ & \quad 
    + \frac{1}{(n_1)_2 (n_2)_2} \sum_{1 \leq i,k \leq n_1}^\star \sum_{1 \leq j,l \leq n_2}^\star 
    \bfx_i^{(1)^\top} \bfx_j^{(2)} \bfx_k^{(1)^\top} \bfx_l^{(2)^\top} 
    .
     \label{eq_def_V}
\end{align}
Then, we propose $T_n$ as an estimator of $\| \bfSigma_{n,1} - \bfSigma_{n,2} \|_F^2,$ where
\begin{align*}
    T_n : =  U_{n,1 } + U_{n,2 } - 2 V_n.
\end{align*}

\noindent\emph{Estimating the variance of $T_n$.}
We will show in Theorem \ref{thm_conv_non_feasible} that the leading order term in the variance of $T_n$ under EMs equals
  \begin{align} 
        \sigma_n^2 & := \sum_{i=1,2} \Bigg\{ 
        \frac{4}{ n_i^2} \tr^2 \bfSigma_{n,i}^2 
         + \frac{8}{n_i} \tr  \left\{ \bfSigma_{n,i} \lb \bfSigma_{n,1} - \bfSigma_{n,2} \rb   \right\} ^2 
         \nonumber \\ & \quad ~ \quad ~ \quad  
        + \frac{4 (\tau_i - 2) }{p n_i}  \tr^2 \lb \bfSigma_{n,i} \lb \bfSigma_{n,1} - \bfSigma_{n,2} \rb \rb 
         \Bigg\} 
        + \frac{8}{n_1 n_2} \tr^2 \lb \bfSigma_{n,1} \bfSigma_{n,2}\rb.  \label{eq_def_sigma_n}
    \end{align}
   Here, $\tau_i$ is a constant independent of $n$ and is related to the variance of the scalar variable in \eqref{eq_em}. 
   It will be formally introduced in \eqref{eq_ass_xi}. 
    Under $\mathsf{H}_{0,n},$ we denote $\bfSigma_{n,0} = \bfSigma_{n,1} = \bfSigma_{n,2}$ and the expression for $\sigma_n^2$ simplifies to 
    \begin{align}
        \sigma_{n,0}^2 & = 4 \lb \frac{1}{n_1} + \frac{1}{n_2} \rb^2 \tr^2 \lb \bfSigma_{n,0}^2 \rb .
    \end{align}
Then, the proposed estimator for $\sigma_{n,0}$ is given by 
\begin{align*}
    \hat\sigma_{n,0} = \frac{2}{n_1} U_{n,1} + \frac{2}{n_2} U_{n,2} .
\end{align*}

\noindent\emph{Test decision.}
For the testing problem \eqref{eq_hypothesis}, we propose to reject $\mathsf{H}_{0,n}$ in favor of $\mathsf{H}_{1,n}$ at a prescribed level $\alpha \in (0,1)$, whenever
\begin{align} \label{eq_test_decision}
    \hat{\mathcal{T}}_{n} := \frac{1}{\hat\sigma_{n,0}}  T_n > z_{1-\alpha} ,
\end{align}
where $z_{1-\alpha}$ denotes the $(1-\alpha)$-quantile of a standard normal distribution. 
    
\subsection{Theoretical results} \label{sec_theory}

\begin{assumption} 
\noindent

\begin{enumerate}[label=(A\arabic*)] 
\item \label{ass_model}
    The samples $\bfx_1^{(1)}, \ldots, \bfx_{n_1}^{(1)}$ and $\bfx_2^{(2)}, \ldots, \bfx_{n_2}^{(2)}$ are independent. 
    For $i=1,2$, the observations $\bfx_1^{(i)}, \ldots, \bfx_{n_i}^{(i)}$ follow a generalized elliptical distribution with mean vector $\boldsymbol{\mu}_{n,i}\in\R^p$ and covariance matrix $\bfSigma_{n,i}\in \R^{p\times p}$, that is,
    \begin{align*}
    \bfx_j^{(i)} = \xi_{i,j} \bfSigma_{n,i}^{1/2} \bfu_{i,j} + \boldsymbol{\mu}_{n,i}, \quad 1 \leq j \leq n_i.
\end{align*}
    Here, $(\bfu_{i,1}, \xi_{i,1}), \ldots, (\bfu_{i,n_i}, \xi_{i,n_i})$ are i.i.d. random vectors in $\R^p \times [0,\infty)$ such that
    $\bfu_{i,1}$ is uniformly distributed on the $p$-dimensional unit sphere, $\xi_{i,1}$ is independent of $\bfu_{i,1}$. Moreover, $\xi_{i,1}$ satisfies the following moment assumptions
    \begin{align}
       \E [ \xi_{i,1}^2]  = p, \quad  
       \Var \lb \frac{\xi_{i,1}^2}{\sqrt{p}} \rb  =  \tau_i + o(1), \quad 
       \E \left| \frac{\xi_{i,1}^2 - p}{\sqrt{p}} \right|^{8} = o(p^2),
       \label{eq_ass_xi}
    \end{align}
    for some fixed $\tau_i>0$ independent of $n$. 
    \item The parameters $p=p(n), n_1 = n_1(n), n_2=n_2(n) \in\N$  are functions of $n\in\N$, and $p, n_1, n_2\to\infty$, as $n\to\infty$. Moreover, $n_1/(n_1 + n_2)$ is asymptotically of order $1.$ \label{ass_n_p}
    \item \label{ass_cov}
    For any $i,j,k,l\in \{1,2\}$, we have that $\tr ( \bfSigma_k \bfSigma_l ) \to \infty$ and
    \begin{align*}
        \tr \lb \bfSigma_{n,i} \bfSigma_{n,j} \bfSigma_{n,k} \bfSigma_{n,l}\rb = o \left\{ \tr \lb \bfSigma_{n,i} \bfSigma_{n,j} \rb \tr \lb \bfSigma_{n,k} \bfSigma_{n,l} \rb \right\} .
    \end{align*}
    \end{enumerate}
\end{assumption}
\begin{remark} {\rm
Assumption \ref{ass_model} appeared in many previous works analyzing high-dimensional elliptical data and describes the way how $\xi_{i,1}^2/\sqrt{p} $ degenerates. Examples of specific distributions satisfying these conditions, as well as further discussion, can be found in \cite{wang2023bootstrap, hu_et_al_2019, wang2024testing, hu2019central}. Assumption \ref{ass_n_p} on the sample sizes $n_1, n_2$ being comparable is conventional for two-sample testing problems \citep{srivastava2010testing, cai2013two, schott2007test, baietal2009}. Notably, it does not impose restriction on the magnitude of the dimension-to-sample size ratio $p/n.$
Assumption \ref{ass_cov} was introduced by \cite{lichen2012}, while \cite{yu2024fisher} impose a stronger version of \ref{ass_cov}.  }
\end{remark}

\begin{figure}[h!]
    \centering
    \includegraphics[width=0.49\linewidth]{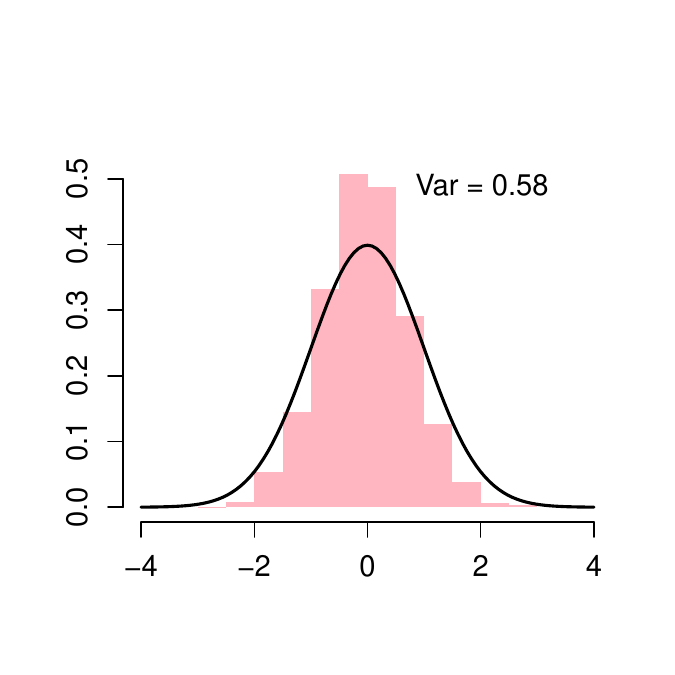}
    \includegraphics[width=0.49\linewidth]{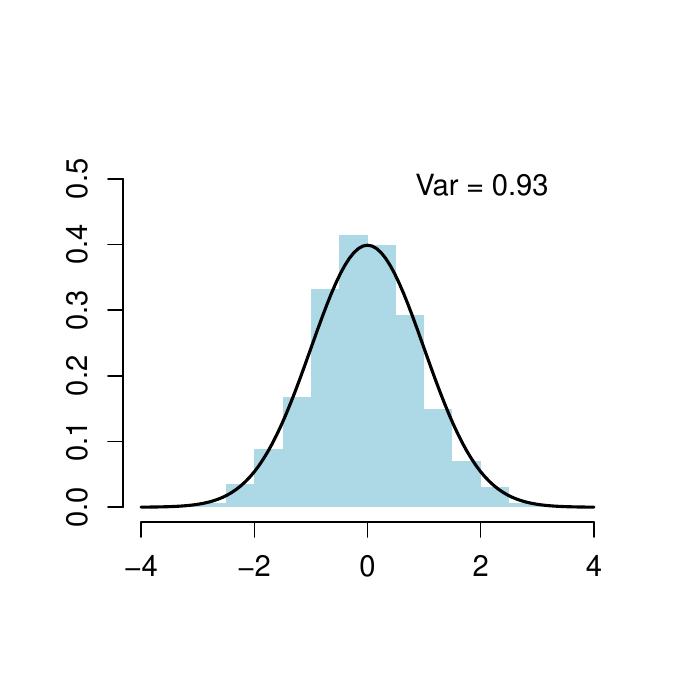}
     \includegraphics[width=0.49\linewidth]{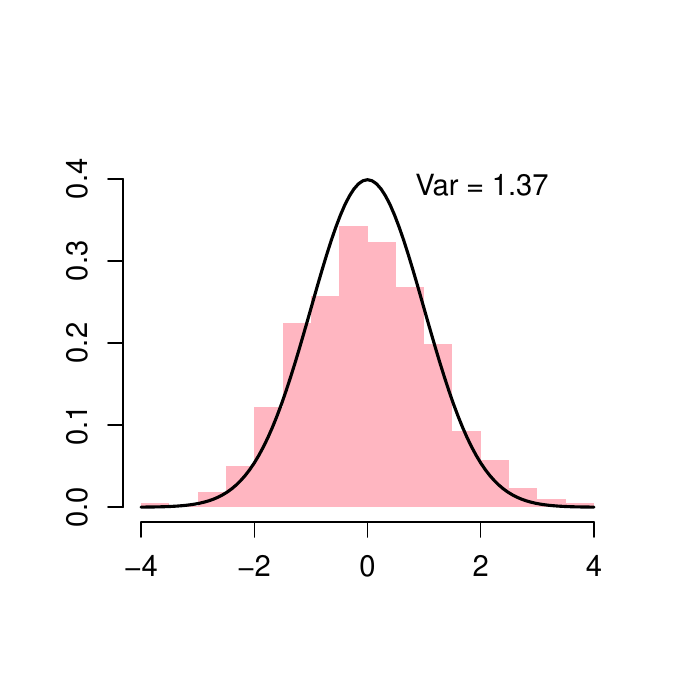}
    \includegraphics[width=0.49\linewidth]{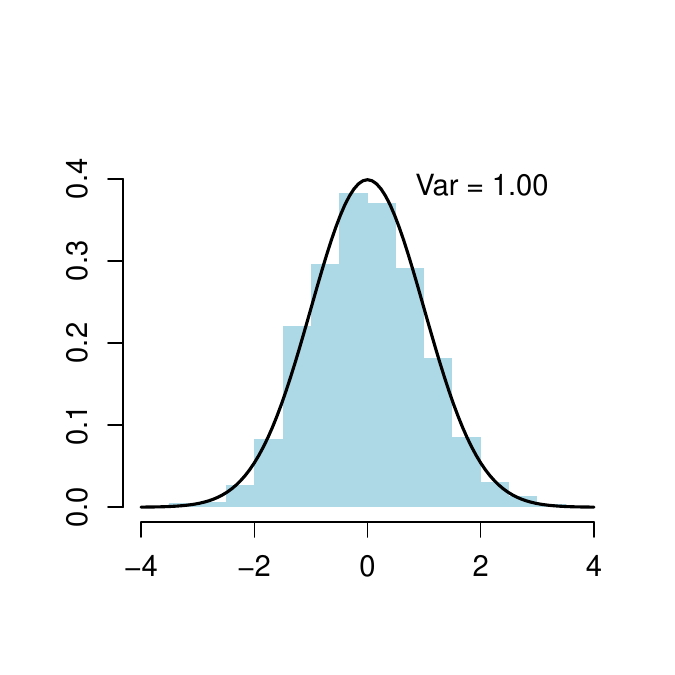}
    \caption{Histograms of    
    $(1/\gamma_n) (T_n - \| \bfSigma_{n,1} - \bfSigma_{n,2}\|_F^2)$ (left column) 
    and $(1/\sigma_n) (T_n - \| \bfSigma_{n,1} - \bfSigma_{n,2}\|_F^2)$ (right column) for $\xi_{1,1}^2, \xi_{1,2}^2 \sim (p+4)\operatorname{Beta}(p/2, 2)$ (upper row), $\xi_{1,1}^2, \xi_{1,2}^2 \sim 1/(p+1)\operatorname{Gamma}( p, 1)^2$ (lower row), $(n_1, n_2,p)=(100,100,200)$, $\bfSigma_{n,1}=\operatorname{diag}(1, \ldots,1)$, $\bfSigma_{n,2}=\operatorname{diag}(5, \ldots,5)$, based on 2000 repititions. }
    \label{fig:histograms}
\end{figure}
\begin{theorem} \label{thm_conv_non_feasible}
Suppose that assumptions \ref{ass_model}- \ref{ass_cov} are satisfied. Then, it holds 
\begin{align*}
    \frac{1}{\sigma_n} \lb T_n - \| \bfSigma_{n,1} - \bfSigma_{n,2} \|_F^2 \rb  \cond Z \sim \mathcal{N}(0,1),
\end{align*}
where $\sigma_n$ is defined in \eqref{eq_def_sigma_n}.
\end{theorem}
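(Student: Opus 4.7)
The plan is to establish the CLT via a martingale-difference decomposition after reducing to the mean-zero setting and applying the spherical-Gaussian representation of the elliptical vectors. First, I would observe that each of the U-statistics $U_{n,i}$ and $V_n$ is designed as an unbiased estimator whose combination of summands over pairwise distinct index tuples exactly cancels all contributions from $\boldsymbol{\mu}_{n,i}$; hence I may assume without loss of generality that $\boldsymbol{\mu}_{n,i}=0$ and write $\bfx_j^{(i)} = \xi_{i,j} \bfSigma_{n,i}^{1/2} \bfu_{i,j}$. I would then substitute $\bfu_{i,j} \stackrel{d}{=} \bfz_{i,j}/\|\bfz_{i,j}\|$ with $\bfz_{i,j} \iid \mathcal{N}(0, \bfI_p)$ independent of $\xi_{i,j}$, so that $\bfx_j^{(i)} = c_{i,j}\, \bfSigma_{n,i}^{1/2} \bfz_{i,j}$ with $c_{i,j} := \xi_{i,j}/\|\bfz_{i,j}\|$. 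Each inner product then takes the ICM-like form $c_{i,j} c_{i',k}\, \bfz_{i,j}^\top \bfSigma_{n,i}^{1/2} \bfSigma_{n,i'}^{1/2} \bfz_{i',k}$, with the elliptical structure fully absorbed into the scalar factors $c_{\cdot,\cdot}$.

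Second, I would construct a martingale decomposition of $T_n - \E T_n = T_n - \|\bfSigma_{n,1} - \bfSigma_{n,2}\|_F^2$ by enumerating the $n_1+n_2$ observations in some order and letting $\mathcal{F}_s$ denote the $\sigma$-algebra generated by the first $s$ of them. The differences $D_s := \E[T_n \mid \mathcal{F}_s] - \E[T_n \mid \mathcal{F}_{s-1}]$ form a martingale difference sequence summing to $T_n - \E T_n$. Since each of $U_{n,i}$ and $V_n$ is a U-statistic of order at most four, each $D_s$ is a finite linear combination of forms of degree at most three in the previously revealed vectors contracted against the newly-added one, plus a scalar piece involving $c_{\cdot,s}$ alone. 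I would then invoke the martingale CLT, which reduces the problem to verifying $\sum_s \E[D_s^2 \mid \mathcal{F}_{s-1}] \conp \sigma_n^2$ together with a Lindeberg or conditional fourth-moment condition; the latter is accessible through the 8th-moment bound on $(\xi_{i,1}^2 - p)/\sqrt{p}$ in \ref{ass_model} combined with standard moment bounds for quadratic forms in Gaussian vectors.

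Third, I would compute the leading variance by expanding $c_{i,j}^2 = 1 + (c_{i,j}^2 - 1)$ and separately tracking the resulting contributions. The constant-one branch reproduces exactly the Gaussian variance formula and, via classical identities for $\E[(\bfz^\top \bfA \bfz')^2]$ and $\E[(\bfz^\top \bfA \bfz)^2]$, yields the terms $\frac{4}{n_i^2}\tr^2 \bfSigma_{n,i}^2$, $\frac{8}{n_i}\tr\{\bfSigma_{n,i}(\bfSigma_{n,1}-\bfSigma_{n,2})\}^2$, and $\frac{8}{n_1 n_2}\tr^2(\bfSigma_{n,1}\bfSigma_{n,2})$. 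The cross term $(c_{i,j}^2 - 1)(c_{i,k}^2 - 1)$ for $j\neq k$ in the same sample, combined with $\Var(\xi_{i,j}^2/\sqrt{p}) = \tau_i + o(1)$, produces the additional contribution $\frac{4(\tau_i-2)}{pn_i}\tr^2\{\bfSigma_{n,i}(\bfSigma_{n,1}-\bfSigma_{n,2})\}$; the factor $\tau_i - 2$ here measures the excess variance of $\xi_{i,j}^2/\sqrt{p}$ beyond the chi-square baseline value $2$, at which the elliptical model reduces to the Gaussian.

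The principal technical obstacle lies in the second step: the scalar factors $c_{i,j}$ act multiplicatively on every observation, so the quadratic forms inside $\E[D_s^2 \mid \mathcal{F}_{s-1}]$ involve products of up to four distinct $c$'s whose fluctuations around $1$ couple nontrivially with the spherical moments of the $\bfu$'s. Isolating the order-$(\tau_i - 2)/(pn_i)$ contribution among the many combinatorial cross terms, while simultaneously bounding every term of strictly smaller order uniformly under the mild spectral condition \ref{ass_cov}---which imposes no explicit constraint on the ratio $p/n$---is the core difficulty that distinguishes the elliptical analysis from the ICM argument of \cite{lichen2012}.
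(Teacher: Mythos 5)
Your proposal follows the same high-level strategy as the paper: reduce to centered observations, decompose $T_n - \E T_n$ into martingale differences by revealing the observations one at a time, and invoke the martingale CLT after verifying convergence of the conditional variance and a Lindeberg-type condition. That skeleton is correct, so the question is whether the sketch of the variance computation holds up. Two remarks.

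First, the Gaussian representation $\bfu \stackrel{d}{=} \bfz/\|\bfz\|$, $\bfx = c\,\bfSigma^{1/2}\bfz$ with $c=\xi/\|\bfz\|$, is more of a detour than a simplification: $c$ and $\bfz$ are dependent through $\|\bfz\|$, so $c^2$ cannot be treated as a scalar that factors out of Gaussian Wick calculations. Indeed $c^2\,\bfz^\top \bfA\bfz = \xi^2\,\bfu^\top\bfA\bfu$, which is exactly the quantity the paper manipulates directly, using the independence $\xi\perp\bfu$ and closed-form moments of $\bfu^\top\bfA\bfu$ (Lemmas \ref{lem_quad_form_formula}, \ref{lem_quad_form_est}). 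Re-expressing through $\bfz$ only re-couples what is already decoupled.

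Second, and more substantively, your attribution of the $(\tau_i-2)$ correction is wrong. You write that it comes from cross terms $(c_{i,j}^2-1)(c_{i,k}^2-1)$ with $j\neq k$. But for $j\neq k$ the scalars $c_{i,j}$ and $c_{i,k}$ are independent, so $\E\bigl[(c_{i,j}^2-1)(c_{i,k}^2-1)\bigr] = \bigl(\E[c_{i,1}^2]-1\bigr)^2 = O(p^{-2})$, which is too small and in any case does not carry $\tau_i$. The $(\tau_i-2)/p$ term instead arises from a \emph{single} observation: for a fixed symmetric matrix $\bfB$,
\begin{align*}
\Var\bigl(\bfx_k^\top\bfB\bfx_k\bigr)
= \frac{\E[\xi_{i,1}^4]}{p(p+2)}\Bigl(\tr^2(\bfSigma_i\bfB) + 2\tr(\bfSigma_i\bfB)^2\Bigr) - \tr^2(\bfSigma_i\bfB),
\end{align*}
and the coefficient $\frac{\E[\xi_{i,1}^4]}{p(p+2)} - 1 = \frac{\tau_i-2}{p} + o(p^{-1})$ multiplying $\tr^2(\bfSigma_i\bfB)$ is precisely the elliptical excess; with $\bfB = \bfSigma_{n,1}-\bfSigma_{n,2}$ this yields $\frac{4(\tau_i-2)}{pn_i}\tr^2\bigl(\bfSigma_{n,i}(\bfSigma_{n,1}-\bfSigma_{n,2})\bigr)$ after summing over $k$. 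The paper isolates exactly this contribution in the proof of Lemma \ref{lem_cov_mean}, showing that the analogous coefficient multiplying $\tr^2(\bfSigma_i\bfA_{1:(k-1)})$ (the data-dependent part of $\bfB_{1,k}$) is asymptotically negligible. As written, your sketch misidentifies the mechanism and would not produce the stated variance if carried out literally; you would need to redo the bookkeeping of where $\E[\xi^4]$ enters.
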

The proof of Theorem \ref{thm_conv_non_feasible} is deferred to Section \ref{sec_proof_main}. 
\begin{remark}
{\rm
    \cite{lichen2012} showed a central limit theorem for $T_n$ for an ICM-type model. (In fact, the authors allow for a mild relaxation of the model presented in \eqref{eq_icm} and in particular replace the assumption of independent components by the factorizing of mixed moments up to order 8.) In this case, the asymptotic variance equals
       \begin{align*}
         \gamma_n^2 & := \sum_{i=1,2} \Bigg\{ 
        \frac{4}{ n_i^2} \tr^2 \bfSigma_{n,i}^2 
         + \frac{8}{n_i} \tr  \left\{ \bfSigma_{n,i} \lb \bfSigma_{n,1} - \bfSigma_{n,2} \rb   \right\} ^2 
         \nonumber \\ & \quad ~ \quad ~ \quad  
        + \frac{4 ( \nu_{i,3} -3 )}{n_i} \tr \left\{ \bfSigma_{n,i}^{1/2} \lb \bfSigma_{n,1} - \bfSigma_{n,2} \rb \bfSigma_{n,i}^{1/2} \circ \bfSigma_{n,i}^{1/2} \lb \bfSigma_{n,1} - \bfSigma_{n,2} \rb \bfSigma_{n,i}^{1/2}  \right\}
         \Bigg\} \\ & 
        + \frac{8}{n_1 n_2} \tr^2 \lb \bfSigma_{n,1} \bfSigma_{n,2}\rb. 
    \end{align*}
    Here, $'\circ '$ denotes the Hadamard product, and $\nu_{i,3}$ denotes the fourth moment of the independent component. 
    It is important to emphasize that their result is not valid for elliptical data-generating distributions as captured by assumption \ref{ass_model}.
    In general, their asymptotic variance significantly differs from the variance $\sigma_n^2$ for the EM. This observation is illustrated numerically in Figure \ref{fig:histograms}, where we display histograms of    
    $(1/\gamma_n) (T_n - \| \bfSigma_{n,1} - \bfSigma_{n,2}\|_F^2)$ (left column) 
    and $(1/\sigma_n) (T_n - \| \bfSigma_{n,1} - \bfSigma_{n,2}\|_F^2)$ (right column)
    for two elliptical distributions. The black curve indicates the density of the standard normal distribution. It can be observed, that under  \ref{ass_model}, $1/\gamma_n$ does not standardize $T_n$, with $\gamma_n$ overestimating (upper left) or underestimating (lower left) the variance.

    For the normal case, however, we have $\xi_{1,1}^2, \xi_{2,1}^2\sim \mathcal{X}^2_p$, $\tau_1 = \tau_2 = 2$, and $\nu_{1,3} = \nu_{2,3} = 3$. Thus, the variances $\gamma_n^2$ and $\sigma_n^2$ coincide in this special case. }
\end{remark}
Next, we provide the ratio-consistency of the estimator $\hat\sigma_{n,0}$ for $\sigma_{n,0}$ under the null hypothesis and show that its deterministic approximate under the alternative is given by
\begin{align*}
    \tilde\sigma_{n} & =  \frac{2}{n_1} \tr \lb  \bfSigma_{n,1}^2 \rb + \frac{2}{n_2} \tr \lb  \bfSigma_{n,2}^2 \rb. 
\end{align*}
The proof can be found in Section \ref{sec_proof_power_consistent}.
\begin{proposition} \label{prop_consistent_sigma}
     Suppose that assumptions \ref{ass_model}- \ref{ass_cov} are satisfied. Then, it holds
     \begin{align*}
         \frac{\hat\sigma_{n,0}}{\tilde\sigma_{n}} \conp 1.
     \end{align*}
     In particular, if $\mathsf{H}_{0,n}$ holds true for all large $n$, then  $\hat\sigma_{n,0}$ is a ratio-consistent estimator of $\sigma_{n,0}$, that is,
     \begin{align*}
         \frac{\hat\sigma_{n,0}}{\sigma_{n,0}} \conp 1.
     \end{align*}
\end{proposition}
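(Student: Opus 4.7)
The plan is to reduce the claim to a per-sample ratio consistency of $U_{n,i}$ for $\tr(\bfSigma_{n,i}^2)$. Writing
\[
\frac{\hat\sigma_{n,0}}{\tilde\sigma_n} \;=\; w_{n,1}\,\frac{U_{n,1}}{\tr(\bfSigma_{n,1}^2)} + w_{n,2}\,\frac{U_{n,2}}{\tr(\bfSigma_{n,2}^2)}, \qquad w_{n,i} := \frac{(2/n_i)\,\tr(\bfSigma_{n,i}^2)}{\tilde\sigma_n},
\]
the weights $w_{n,1},w_{n,2}$ are deterministic, nonnegative, and sum to one. Hence it suffices to establish $U_{n,i}/\tr(\bfSigma_{n,i}^2)\conp 1$ for $i\in\{1,2\}$, after which Slutsky's lemma delivers the first assertion.

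For each $i$, I would combine the unbiasedness $\E[U_{n,i}] = \tr(\bfSigma_{n,i}^2)$, already noted after the definition \eqref{eq_def_U}, with Chebyshev's inequality. The nontrivial step is the variance bound
\[
\frac{\Var(U_{n,i})}{\tr^2(\bfSigma_{n,i}^2)} \;\longrightarrow\; 0.
\]
This is essentially a by-product of Theorem \ref{thm_conv_non_feasible}: identifying the leading-order contributions to $\sigma_n^2$ in its proof requires computing $\Var(U_{n,i})$ via moment expansions of the bilinear forms $\bfx_j^{(i)\top}\bfx_k^{(i)} = \xi_{i,j}\xi_{i,k}\,\bfu_{i,j}^\top \bfSigma_{n,i}\bfu_{i,k}$ under the representation \eqref{eq_em}, using the standard moment identities for the uniform distribution on the sphere together with the eighth-moment control on $\xi_{i,1}^2/\sqrt{p}$ from \eqref{eq_ass_xi}. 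The dominant contribution is $4\tr^2(\bfSigma_{n,i}^2)/n_i^2$, while all other terms involve $\tr(\bfSigma_{n,i}^4)$ or lower-order mixed traces that are $o(\tr^2(\bfSigma_{n,i}^2))$ by assumption \ref{ass_cov}. Dividing through by $\tr^2(\bfSigma_{n,i}^2)$ and invoking \ref{ass_n_p} gives $O(1/n_i^2) + o(1)\to 0$, and Chebyshev's inequality yields the desired in-probability convergence.

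For the second assertion, substituting $\bfSigma_{n,1}=\bfSigma_{n,2}=\bfSigma_{n,0}$ into the definition of $\tilde\sigma_n$ immediately yields $\tilde\sigma_n = 2(1/n_1 + 1/n_2)\tr(\bfSigma_{n,0}^2) = \sigma_{n,0}$, so $\hat\sigma_{n,0}/\sigma_{n,0}\conp 1$ follows from the first part. The main obstacle is the variance estimate for $U_{n,i}$: the scalar factor $\xi$ in \eqref{eq_em} induces dependence across coordinates that must be tracked alongside the many index configurations arising from squaring the U-statistic. However, this bookkeeping is a single-sample specialization of the more intricate two-sample computation already carried out in the proof of Theorem \ref{thm_conv_non_feasible}, so no genuinely new technical machinery is required.
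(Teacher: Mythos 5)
Your proposal is correct and follows essentially the same route as the paper: unbiasedness of $U_{n,i}$ for $\tr(\bfSigma_{n,i}^2)$ plus a variance bound $\Var(U_{n,i}) = o\bigl(\tr^2\bfSigma_{n,i}^2\bigr)$ (obtained from the moment identities for elliptical quadratic forms already used in the proof of Theorem \ref{thm_conv_non_feasible}, specifically via the decomposition $U_{n,i}=U_{n,i,1}+U_{n,i,2}+U_{n,i,3}$ and bounds such as \eqref{eq_bound_un112}, \eqref{eq_bound_un113}), then Chebyshev, with the second assertion dropping out by substituting $\bfSigma_{n,1}=\bfSigma_{n,2}$ into $\tilde\sigma_n$. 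Your explicit convex-weight decomposition is just a cleaner packaging of the paper's implicit ``per-sample'' reduction, and your claim that the dominant variance term is exactly $4\tr^2(\bfSigma_{n,i}^2)/n_i^2$ is sharper than what the paper bothers to show (it only establishes $\mathcal{O}(n_i^{-1}\tr^2\bfSigma_{n,i}^2)$), but that extra precision is unneeded and harmless.
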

Recall the definition of $ \hat{\mathcal{T}}_{n}$ in \eqref{eq_test_decision}. Then, Proposition \ref{prop_consistent_sigma} and Theorem \ref{thm_conv_non_feasible} imply that the level of the test proposed in \eqref{eq_test_decision} can be asymptotically controlled. 
\begin{corollary}[Level control] \label{cor_level}
    Suppose that assumptions \ref{ass_model}- \ref{ass_cov} are satisfied. 
  If $\mathsf{H}_{0,n}$ holds true for all large $n,$ we have 
            \begin{align*}
                \lim_{n\to\infty} \PR \lb \hat{\mathcal{T}}_{n} > z_{1-\alpha} \rb  = \alpha.
            \end{align*}
\end{corollary}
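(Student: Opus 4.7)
The plan is to obtain Corollary \ref{cor_level} as a direct consequence of Theorem \ref{thm_conv_non_feasible} and Proposition \ref{prop_consistent_sigma} via Slutsky's theorem. Under $\mathsf{H}_{0,n}$, setting $\bfSigma_{n,1} = \bfSigma_{n,2} = \bfSigma_{n,0}$ causes the squared Frobenius norm term in Theorem \ref{thm_conv_non_feasible} to vanish, and the general expression for the variance in \eqref{eq_def_sigma_n} collapses to $\sigma_{n,0}^2 = 4(1/n_1 + 1/n_2)^2 \tr^2(\bfSigma_{n,0}^2)$, as noted in the main text. Invoking Theorem \ref{thm_conv_non_feasible} therefore yields
\begin{align*}
    \frac{T_n}{\sigma_{n,0}} \cond \mathcal{N}(0,1).
\end{align*}

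Next, I would apply Proposition \ref{prop_consistent_sigma}, which guarantees $\hat\sigma_{n,0}/\sigma_{n,0} \conp 1$ under the null, so that by the continuous mapping theorem $\sigma_{n,0}/\hat\sigma_{n,0} \conp 1$ as well (noting that $\sigma_{n,0} > 0$ by assumption \ref{ass_cov}, so the reciprocal is eventually well defined). Decomposing
\begin{align*}
    \hat{\mathcal{T}}_n \;=\; \frac{T_n}{\hat\sigma_{n,0}} \;=\; \frac{T_n}{\sigma_{n,0}} \cdot \frac{\sigma_{n,0}}{\hat\sigma_{n,0}},
\end{align*}
Slutsky's theorem delivers $\hat{\mathcal{T}}_n \cond \mathcal{N}(0,1)$. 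Since $z_{1-\alpha}$ is a continuity point of the standard normal cumulative distribution function, the Portmanteau theorem yields $\PR(\hat{\mathcal{T}}_n > z_{1-\alpha}) \to \PR(Z > z_{1-\alpha}) = \alpha$.

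There is no substantive obstacle at this stage: all analytic difficulties have already been resolved by the CLT of Theorem \ref{thm_conv_non_feasible} for the U-statistic $T_n$ under elliptical data, and by the ratio-consistency of the variance estimator in Proposition \ref{prop_consistent_sigma}. The corollary is essentially a packaging step that combines these two ingredients through Slutsky's theorem.
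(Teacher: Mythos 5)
Your argument is correct and is exactly the route the paper intends: the corollary is stated to follow directly from Theorem \ref{thm_conv_non_feasible} and Proposition \ref{prop_consistent_sigma}, and your Slutsky-plus-Portmanteau packaging is the standard way to make that explicit. No discrepancies.
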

\begin{remark}
    The analogue of Corollary \ref{cor_level} was proven in \cite{lichen2012} for ICM-type models. Thus, the test based on $\hat{\mathcal{T}}_{n}$ is valid under both ICMs and EMs. 
\end{remark}
The following theorem specifies when the proposed test attains asymptotically power one. This result will be proven in Section \ref{sec_proof_power_consistent}. 
\begin{theorem}[Power consistency] \label{thm_power}
 Suppose that assumptions \ref{ass_model}- \ref{ass_cov} are satisfied. 
  If $\| \bfSigma_{n,1} - \bfSigma_{n,2} \|_F^2 / \sigma_n \to \infty,$ then
    \begin{align*}
          \lim_{n\to\infty} \PR \lb \hat{\mathcal{T}}_{n} > z_{1-\alpha} \rb 
         = 1.
    \end{align*}
\end{theorem}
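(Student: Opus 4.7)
The plan is to reduce the claim to a signal-to-noise comparison, combining the central limit theorem for $T_n$ from Theorem~\ref{thm_conv_non_feasible} with the ratio consistency of $\hat\sigma_{n,0}$ established in Proposition~\ref{prop_consistent_sigma}. I would first decompose
\[
    \hat{\mathcal{T}}_n \;=\; \frac{\sigma_n}{\hat\sigma_{n,0}}\lb \frac{T_n-\| \bfSigma_{n,1}-\bfSigma_{n,2}\|_F^2}{\sigma_n} \;+\; \frac{\| \bfSigma_{n,1}-\bfSigma_{n,2}\|_F^2}{\sigma_n}\rb.
\]
By Theorem~\ref{thm_conv_non_feasible} the first ratio inside the bracket is $O_{\PR}(1)$, while the second tends to $+\infty$ by the hypothesis of the theorem; their sum therefore diverges to $+\infty$ in probability. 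It remains to prove that the prefactor $\sigma_n/\hat\sigma_{n,0}$ is bounded away from zero in probability.

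Since Proposition~\ref{prop_consistent_sigma} gives $\hat\sigma_{n,0}/\tilde\sigma_n \conp 1$, the remaining step is the purely deterministic comparison $\sigma_n \gtrsim \tilde\sigma_n$, which is the technical core of the argument. In \eqref{eq_def_sigma_n} the summand $\frac{4(\tau_i-2)}{pn_i}\tr^2\{\bfSigma_{n,i}(\bfSigma_{n,1}-\bfSigma_{n,2})\}$ may be negative when $\tau_i<2$. My plan is to absorb it into its non-negative companion $\frac{8}{n_i}\tr\{\bfSigma_{n,i}(\bfSigma_{n,1}-\bfSigma_{n,2})\}^2$ via the Cauchy--Schwarz inequality $\tr^2(N)\leq p\,\tr(N^2)$ applied to the symmetric matrix $N=\bfSigma_{n,i}^{1/2}(\bfSigma_{n,1}-\bfSigma_{n,2})\bfSigma_{n,i}^{1/2}$; since $\tau_i>0$, the combined coefficient in front of $\tr\{\bfSigma_{n,i}(\bfSigma_{n,1}-\bfSigma_{n,2})\}^2$ remains non-negative. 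Dropping this non-negative leftover and the cross term $\frac{8}{n_1 n_2}\tr^2(\bfSigma_{n,1}\bfSigma_{n,2})\geq 0$ leaves $\sigma_n^2 \geq \frac{4}{n_1^2}\tr^2(\bfSigma_{n,1}^2)+\frac{4}{n_2^2}\tr^2(\bfSigma_{n,2}^2)$. The elementary inequality $a^2+b^2\geq (a+b)^2/2$ with $a=\frac{2}{n_1}\tr(\bfSigma_{n,1}^2)$ and $b=\frac{2}{n_2}\tr(\bfSigma_{n,2}^2)$ then yields $\sigma_n^2 \geq \tilde\sigma_n^2/2$.

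Combining the two steps, $\sigma_n/\hat\sigma_{n,0}\geq (1+o_{\PR}(1))/\sqrt{2}$, so the prefactor exceeds $1/(2\sqrt 2)$ with probability tending to one. Together with the divergence of the bracket this gives $\hat{\mathcal{T}}_n \conp +\infty$, whence $\PR(\hat{\mathcal{T}}_n>z_{1-\alpha})\to 1$. The main obstacle is precisely the deterministic inequality $\sigma_n^2 \geq \tilde\sigma_n^2/2$: once the signed $(\tau_i-2)$-terms are shown to be dominated by their non-negative companions through the Cauchy--Schwarz step above, everything else is a routine combination of Theorem~\ref{thm_conv_non_feasible} and Proposition~\ref{prop_consistent_sigma}.
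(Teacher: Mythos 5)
Your proposal is correct and follows essentially the same route as the paper: establish the lower bound $\sigma_n^2 \geq \frac{4}{n_1^2}\tr^2\bfSigma_{n,1}^2 + \frac{4}{n_2^2}\tr^2\bfSigma_{n,2}^2$ by combining the Cauchy--Schwarz inequality $\tr^2 N \leq p\tr N^2$ with $\tau_i>0$, deduce $\sigma_n\gtrsim\tilde\sigma_n$, and then feed Proposition~\ref{prop_consistent_sigma} and Theorem~\ref{thm_conv_non_feasible} into an algebraic decomposition of $\hat{\mathcal{T}}_n$. The only cosmetic difference is that you obtain $\sigma_n^2\geq\tilde\sigma_n^2/2$ directly via $a^2+b^2\geq(a+b)^2/2$, whereas the paper invokes an inequality from \cite{lichen2012} to bound $\tilde\sigma_n/\sigma_n$ by $1/(k_n(1-k_n))$; your version is marginally more self-contained.
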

Theorem \ref{thm_power} says that the proposed test attains nontrivial power if the signal $\| \bfSigma_{n,1} - \bfSigma_{n,2} \|_F^2$ is large compared to the noise $\sigma_n.$ It is not hard to show that a sufficient condition for the test to be powerful is that 
\begin{align*}
   \tilde\sigma_n =  \frac{1}{n_1} \tr \bfSigma_{n,1}^2 + \frac{1}{n_2} \tr \bfSigma_{n,2}^2 = \mathcal{O} \lb \| \bfSigma_{n,1} - \bfSigma_{n,2} \|_F^2 \rb. 
\end{align*}
Sufficient conditions for the latter are discussed in \cite{lichen2012} and will not be repeated here for the sake of brevity. 

\section{Finite-sample performance} \label{sec_numerics}
In Section \ref{sec_experiments}, we conduct experiments with synthetic data, while we analyze  stock returns from S\&P 500 in Section \ref{sec_stock}.
We compare the proposed test to several state-of-the-art methods for two-sample covariance testing in high dimensions, as summarized in Table \ref{table_method}. 
\begin{table}[h!]
    \centering
    \begin{tabular}{cccc}
    Abbreviation & Article & Data & Asymptotics  \\
    \hline
        LRT & \cite{dornemann2023likelihood} &  ICM & $p/n_1 \to y_1 \in (0,1)$, $p/n_2 \to y_2 \in (0,1)$ \\
         DHW & \cite{ding2024two} & ICM & $p\asymp n_1^{\alpha_1},~ p\asymp n_2^{\alpha_2}$, $\alpha_1,\alpha_2>1$ \\
         SY & \cite{srivastava2010testing} & normal data  & $n_1, n_2 = \mathcal{O}(p^\delta),~ \delta >1/2$ \\
        ZZPZ & \cite{zhang2022asymptotic}  & ICM  & $n_1=n_2, ~ p/n_1 = p/n_2 \to y \in (0,\infty)$ \\
    \end{tabular}
    \caption{Schematic overview about different methods used for comparision in Section \ref{sec_numerics}.}
    \label{table_method}
\end{table}
Notably, none of the existing methods was developed for the elliptical model \eqref{eq_em}, which is reflected in their erratic numerical performance throughout this section: they sometimes perform well, but often do not approximate the nominal level well or fail to detect differences reliably. In contrast, the proposed method -- explicitly designed for elliptical models -- performs reliably across all scenarios under consideration.

\begin{table}[h!]
    \centering
\begin{tabular}{ccccccc|ccccc} \\
    && \multicolumn{5}{c|}{$\mathsf{H}_0$} & \multicolumn{5}{c}{$\mathsf{H}_1$} \\ 
    & & \ref{case1_data} & \ref{case2_data} & \ref{case3_data} & \ref{case4_data} & \ref{case5_data} 
&  \ref{case1_data} & \ref{case2_data} & \ref{case3_data} & \ref{case4_data} & \ref{case5_data} 
    \\  \hline \\
    \multirow{5}{*}{\ref{case1_cov}}
    & LRT & \colorcell{0.05} & \colorcell{0.10} & \colorcell{0.01} & \colorcell{0.36} & \colorcell{0.18}
    & \colorcellPower{0.88} & \colorcellPower{0.93} & \colorcellPower{0.61} & \colorcellPower{0.98} & \colorcellPower{0.96}
\\
        & DHW & \colorcell{0.00} & \colorcell{0.00} & \colorcell{0.00} & \colorcell{0.00} & \colorcell{0.00}

        & \colorcellPower{0.00} & \colorcellPower{0.00} &  \colorcellPower{0.00} & \colorcellPower{0.00} & \colorcellPower{0.00}
        \\
        & SY  & \colorcell{0.00} & \colorcell{0.00} & \colorcell{0.00} & \colorcell{0.00} & \colorcell{0.00}
        & \colorcellPower{0.09} & \colorcellPower{0.06} & \colorcellPower{0.10} & \colorcellPower{0.09} & \colorcellPower{0.09}
        \\
      & ZZPZ   
      & \colorcell{0.03} & \colorcell{0.03} & \colorcell{0.17} & \colorcell{0.04} & \colorcell{0.02}
      & \colorcellPower{0.62} & \colorcellPower{0.45} & \colorcellPower{1.00} &  \colorcellPower{0.33} & \colorcellPower{0.37}
      \\
      & Proposed &   \colorcell{0.06} &  \colorcell{0.06} &  \colorcell{0.04} &  \colorcell{0.05} &  \colorcell{0.04}
      & \colorcellPower{0.75} & \colorcellPower{0.75} & \colorcellPower{0.74} & \colorcellPower{0.75} & \colorcellPower{0.74}
      \\  \\
      \hline \\ 
       \multirow{5}{*}{\ref{case2_cov}}
       & LRT & \colorcell{0.06} & \colorcell{0.11} & \colorcell{0.00} & \colorcell{0.33} & \colorcell{0.22}
       & \colorcellPower{1.00} & \colorcellPower{1.00} & \colorcellPower{1.00} & \colorcellPower{1.00} & \colorcellPower{1.00}
\\ 
        & DHW  & \colorcell{0.00} & \colorcell{0.00} & \colorcell{0.00} & \colorcell{0.00} & \colorcell{0.00}
        &  \colorcellPower{0.99} & \colorcellPower{0.99}  &  \colorcellPower{ 1.00} & \colorcellPower{0.99} & \colorcellPower{0.97}
        \\
        & SY & \colorcell{0.03} & \colorcell{0.02} & \colorcell{0.03} & \colorcell{0.03} & \colorcell{0.02}
& \colorcellPower{1.00} & \colorcellPower{1.00} & \colorcellPower{1.00} & \colorcellPower{1.00} & \colorcellPower{1.00}
        
        \\
      & ZZPZ & \colorcell{0.03} & \colorcell{0.03} & \colorcell{0.05} & \colorcell{0.03} & \colorcell{0.03}
    & \colorcellPower{1.00} & \colorcellPower{1.00} &    \colorcellPower{1.00} &  \colorcellPower{0.99} & \colorcellPower{1.00}
      
      \\
      & Proposed  & \colorcell{0.09} & \colorcell{0.05} & \colorcell{0.04} & \colorcell{0.05} & \colorcell{0.06}
      & \colorcellPower{1.00} & \colorcellPower{1.00} & \colorcellPower{1.00} & \colorcellPower{1.00} & \colorcellPower{1.00}
      \\
     \\ \hline \\ 
       \multirow{5}{*}{\ref{case3_cov}}
       & LRT & \colorcell{0.04} & \colorcell{0.11} & \colorcell{0.01} & \colorcell{0.34} & \colorcell{0.19}
    & \colorcellPower{1.00} & \colorcellPower{1.00} & \colorcellPower{0.98} & \colorcellPower{1.00} & \colorcellPower{1.00}
       \\ 
        & DHW  
        & \colorcell{0.00} & \colorcell{0.00} & \colorcell{0.00} & \colorcell{0.00} & \colorcell{0.00}
& \colorcellPower{0.02} & \colorcellPower{0.03} & \colorcellPower{0.74} & \colorcellPower{0.20} & \colorcellPower{0.68}
        \\
        & SY  
        & \colorcell{0.02} & \colorcell{0.04} & \colorcell{0.03} & \colorcell{0.03} & \colorcell{0.02}
        &  \colorcellPower{0.14} & \colorcellPower{0.11} &  \colorcellPower{0.11} & \colorcellPower{0.13} & \colorcellPower{0.11}
\\
      & ZZPZ 
      & \colorcell{0.04} & \colorcell{0.04} & \colorcell{0.36} & \colorcell{0.02} & \colorcell{0.04}
    & \colorcellPower{0.85} & \colorcellPower{0.68} &  \colorcellPower{1.00} &  \colorcellPower{0.40} & \colorcellPower{0.49}
      \\
      & Proposed 
      & \colorcell{0.05} & \colorcell{0.07} & \colorcell{0.05} & \colorcell{0.06} & \colorcell{0.05}
    & \colorcellPower{0.98} & \colorcellPower{0.98} &  \colorcellPower{0.98} & \colorcellPower{0.97} & \colorcellPower{0.99}
      \\
     
    \end{tabular}

    \caption{Empirical rejection rates of different tests under $\mathsf{H}_0$ and $\mathsf{H}_1 (\delta=0.2)$ for $(n_1, n_2,p)=(300, 300, 100)$.}
    \label{table_n300_p100}
\end{table}

\begin{table}[h!]
    \centering
\begin{tabular}{ccccccc|ccccc} \\
    && \multicolumn{5}{c|}{$\mathsf{H}_0$} & \multicolumn{5}{c}{$\mathsf{H}_1$} \\ 
    & & \ref{case1_data} & \ref{case2_data} & \ref{case3_data} & \ref{case4_data} & \ref{case5_data} 
&  \ref{case1_data} & \ref{case2_data} & \ref{case3_data} & \ref{case4_data} & \ref{case5_data} 
    \\  \hline \\
    \multirow{5}{*}{\ref{case1_cov}}
        & LRT & \colorcell{0.05} & \colorcell{0.09} & \colorcell{0.01} & \colorcell{0.28} & \colorcell{0.17} 
        & \colorcellPower{0.47} & \colorcellPower{0.59} & \colorcellPower{0.24} &  \colorcellPower{0.83} & \colorcellPower{0.70} \\
        & DHW  & \colorcell{0.06} & \colorcell{0.09} & \colorcell{0.06} & \colorcell{0.07} &  \colorcell{0.03}  
       &   \colorcellPower{0.06} & \colorcellPower{0.06} & \colorcellPower{0.07} & \colorcellPower{0.00} & \colorcellPower{0.09}\\
        & SY  &  \colorcell{0.00} & \colorcell{0.00} & \colorcell{0.00} &  \colorcell{0.00} &  \colorcell{0.00} 
        & \colorcellPower{0.00} & \colorcellPower{0.01} & \colorcellPower{0.02} & \colorcellPower{0.02} & \colorcellPower{0.02}\\
      & ZZPZ  &  \colorcell{0.03} & \colorcell{0.03} & \colorcell{0.13} & \colorcell{0.02} &  \colorcell{0.03}  
      & \colorcellPower{0.46} & \colorcellPower{0.34} & \colorcellPower{0.99} & \colorcellPower{0.19} & \colorcellPower{0.25}
      \\
      & Proposed  &  \colorcell{0.08} & \colorcell{0.04} &  \colorcell{0.04}  & \colorcell{0.04} & \colorcell{0.05}
     & \colorcellPower{0.46} & \colorcellPower{0.45} & \colorcellPower{0.48} & \colorcellPower{0.44} & \colorcellPower{0.45} \\  \\
      \hline \\ 
       \multirow{5}{*}{\ref{case2_cov}}
        & LRT & \colorcell{0.05}  & \colorcell{0.08} & \colorcell{0.01} &  \colorcell{0.29} &  \colorcell{0.18} 
       &  \colorcellPower{0.94} & \colorcellPower{0.98} & \colorcellPower{0.87} & \colorcellPower{1.00} & \colorcellPower{1.00}
        \\
        & DHW  & \colorcell{0.11} & \colorcell{0.13} & \colorcell{0.09} &  \colorcell{0.11} &  \colorcell{0.11} 
        & \colorcellPower{0.98} & \colorcellPower{0.96} & \colorcellPower{0.99} & \colorcellPower{0.95} & \colorcellPower{0.97}
        \\
        & SY  & \colorcell{0.03} & \colorcell{0.04} &  \colorcell{0.03} &  \colorcell{0.03} &  \colorcell{0.05} & \colorcellPower{1.00} & \colorcellPower{1.00} & \colorcellPower{1.00} & \colorcellPower{1.00} & \colorcellPower{1.00}
        \\
      & ZZPZ  &   \colorcell{0.03} & \colorcell{0.02} &  \colorcell{0.05} &  \colorcell{0.03} & \colorcell{0.05} 
     & \colorcellPower{0.99} &  \colorcellPower{0.97} & \colorcellPower{1.00} & \colorcellPower{0.91} & \colorcellPower{0.94}
      \\
      & Proposed  & \colorcell{0.04} & \colorcell{0.04} &  \colorcell{0.05} & \colorcell{0.06} &  \colorcell{0.06} 
      &  \colorcellPower{0.96} & \colorcellPower{0.98} & \colorcellPower{0.97} &  \colorcellPower{0.97}  & \colorcellPower{0.97}
      \\
     \\ \hline \\ 
       \multirow{5}{*}{\ref{case3_cov}}
        & LRT & \colorcell{0.05} & \colorcell{0.10} &  \colorcell{0.00} &  \colorcell{0.28} &  \colorcell{0.18} 
      &  \colorcellPower{0.86} & \colorcellPower{0.91} &  \colorcellPower{0.60}  & \colorcellPower{0.98} & \colorcellPower{0.95}
        \\
        & DHW  & \colorcell{0.09} & \colorcell{0.02} & \colorcell{0.08} & \colorcell{0.04} &  \colorcell{0.03} 
        & \colorcellPower{0.11} & \colorcellPower{0.09} &  \colorcellPower{0.22} &  \colorcellPower{0.15} &  \colorcellPower{0.22}
        \\
        & SY  &  \colorcell{0.02} & \colorcell{0.02} & \colorcell{0.02} & \colorcell{0.03} &  \colorcell{0.02} 
        & \colorcellPower{0.09} &  \colorcellPower{0.07} &  \colorcellPower{0.04} & \colorcellPower{0.11} & \colorcellPower{0.08}
      \\
      & ZZPZ  & \colorcell{0.05} & \colorcell{0.04} &  \colorcell{0.29} &  \colorcell{0.04} &  \colorcell{0.04} 
     & \colorcellPower{0.65} & \colorcellPower{0.53} & \colorcellPower{1.00} & \colorcellPower{0.27} & \colorcellPower{0.37}
      \\
      & Proposed  & \colorcell{0.06} & \colorcell{0.07} & \colorcell{0.07} & \colorcell{0.06}  &\colorcell{0.06} 
     & \colorcellPower{0.79} &  \colorcellPower{0.77} & \colorcellPower{0.82} & \colorcellPower{0.81} &  \colorcellPower{0.77}
      \\
     
    \end{tabular}

    \caption{Empirical rejection rates of different tests under $\mathsf{H}_0$ and $\mathsf{H}_1 (\delta=0.2)$ for $(n_1, n_2,p)=(200, 200, 100)$.}
    \label{table_n200_p100}
\end{table}

\begin{table}[h]
    \centering
\begin{tabular}{ccccccc|ccccc} \\
    && \multicolumn{5}{c|}{$\mathsf{H}_0$} & \multicolumn{5}{c}{$\mathsf{H}_1$} \\ 
    & & \ref{case1_data} & \ref{case2_data} & \ref{case3_data} & \ref{case4_data} & \ref{case5_data} 
&  \ref{case1_data} & \ref{case2_data} & \ref{case3_data} & \ref{case4_data} & \ref{case5_data} 
    \\  \hline \\
    \multirow{3}{*}{\ref{case1_cov}}
        & DHW & \colorcell{0.08} & \colorcell{0.01} & \colorcell{0.01} & \colorcell{0.03} & \colorcell{0.01}

        &\colorcellPower{0.33} & \colorcellPower{0.31} & \colorcellPower{ 0.03} & \colorcellPower{0.51} & \colorcellPower{0.50}
        \\
        & SY 
        &  \colorcell{0.00} & \colorcell{0.00} & \colorcell{0.00} & \colorcell{0.00} & \colorcell{0.00}
& \colorcellPower{0.00} & \colorcellPower{0.00} & \colorcellPower{0.01} & \colorcellPower{0.00} & \colorcellPower{0.00}
      \\
      & Proposed  & \colorcell{0.03} & \colorcell{0.06} & \colorcell{0.08} & \colorcell{0.05} & \colorcell{0.05}
& \colorcellPower{0.28} & \colorcellPower{0.32} &  \colorcellPower{0.26} & \colorcellPower{0.27} & \colorcellPower{0.31}
       \\  \\
      \hline \\ 
       \multirow{3}{*}{\ref{case2_cov}}
        & DHW & \colorcell{0.06} & \colorcell{0.01} & \colorcell{0.09} & \colorcell{0.01} & \colorcell{0.02}

        & \colorcellPower{0.94} & \colorcellPower{0.92} & \colorcellPower{1.00} & \colorcellPower{0.87} & \colorcellPower{0.16}
        \\
        & SY & \colorcell{0.02} & \colorcell{0.04} & \colorcell{0.05} & \colorcell{0.06} & \colorcell{0.03}

        & \colorcellPower{0.95}  & \colorcellPower{0.90} & \colorcellPower{0.99} & \colorcellPower{0.80} & \colorcellPower{0.86} \\
      & Proposed  & \colorcell{0.04} & \colorcell{0.05} & \colorcell{0.04} & \colorcell{0.05} & \colorcell{0.05}
      & \colorcellPower{0.73} & \colorcellPower{0.80} & \colorcellPower{0.78} &  \colorcellPower{0.78} &  \colorcellPower{0.75}
      \\
     \\ \hline \\ 
       \multirow{3}{*}{\ref{case3_cov}}
        & DHW & \colorcell{0.00} & \colorcell{0.02} & \colorcell{0.07} & \colorcell{0.01} & \colorcell{0.11}

        & \colorcellPower{0.32} & \colorcellPower{0.90} & \colorcellPower{0.99} & \colorcellPower{0.04} & \colorcellPower{0.26}
        \\
        & SY & \colorcell{0.00} & \colorcell{0.00} & \colorcell{0.01} & \colorcell{0.02} & \colorcell{0.00}

        & \colorcellPower{0.11} &  \colorcellPower{0.11} & \colorcellPower{0.24} & \colorcellPower{0.06}  & \colorcellPower{0.10}\\
      & Proposed & \colorcell{0.05} & \colorcell{0.05} & \colorcell{0.04} & \colorcell{0.07} & \colorcell{0.05}
      & \colorcellPower{0.74} & \colorcellPower{0.71} & \colorcellPower{0.75} & \colorcellPower{0.71} & \colorcellPower{0.71}
      \\
     
    \end{tabular}

    \caption{Empirical rejection rates of different tests under $\mathsf{H}_0$ and $\mathsf{H}_1$ ($\delta=0.3$ for $(n_1, n_2, p)=(50,100, 300)$.}
    \label{table_n1n2}
\end{table}

\subsection{Numerical experiments} \label{sec_experiments}
\noindent \textbf{Design of experiments.}
Following \cite{wang2024testing}, we consider the following distributions for $\xi_{i,1}^2, i=1,2:$
\begin{enumerate}[label=(\roman*)]
    \item \label{case1_data} $ \mathcal{X}_p^2$ ,
    \item \label{case2_data} Beta-Prime$(\frac{p(1+p+3)}{3}, \frac{1+p+6}{3} )$,
    \item \label{case3_data} $(p+4)$Beta$(p/2, 2)$,
    \item \label{case4_data}
    $\operatorname{Gamma} (p/5, 1/5)$,
    \item \label{case5_data} % case 5 
    $\frac{1}{p+1} \Gamma (p,1)^2$ .
\end{enumerate}
Note that the above choices for $\xi_i^2$ satisfy $\E[\xi_i^2]=p. $
Under $\mathsf{H}_0$, we consider the following choices for $\bfSigma_{n,1}=\bfSigma_{n,2}$
\begin{enumerate}[label=(\alph*)]
    \item \label{case1_cov} (Two bulk eigenvalues) $\mathbf{Q}_a \operatorname{diag}( \underbrace{2, \ldots, 2}_{p/2}, \underbrace{1, \ldots, 1}_{p/2} )
 \mathbf{Q}_a^\top $,
    \item \label{case2_cov} (Toeplitz matrix) $\bfSigma_{n,1}=\bfSigma_{n,2} $ has entries $\Sigma_{ij} = \rho^{|i-j|}$, $1 \leq i \leq p,$
    \item \label{case3_cov} (Spiked covariance) $\bfSigma_{n,1}=\bfSigma_{n,2} = \mathbf{Q}_b \operatorname{diag}(5,4,3, \underbrace{1, \ldots, 1}_{p-3}) \mathbf{Q}_b^\top $, 
\end{enumerate}
where $\mathbf{Q}_a, \mathbf{Q}_b$ are  randomly generated $p\times p$ orthogonal matrices. For $\mathsf{H}_1,$ we follow \cite{lichen2012, ding2024two} considering banded alternatives. More precisely, $\bfSigma_{n,1}$ is defined according to \ref{case1_cov}-\ref{case3_cov} and we then set $\bfSigma_{n,2} = \bfSigma_{n,1} + \mathbf{B}(\delta),$
where $\mathbf{B}(\delta)$ is a $p\times p$ banded matrix with entries $b_{ij}(\delta)= \delta^2 \mathbf{1}_{\{i=j\}} + \delta \mathbf{1}_{\{|i-j|=1\}} $.

In Tables \ref{table_n300_p100}-\ref{table_n1n2}, we present the empirical rejection rates of the tests under both the null hypothesis $\mathsf{H}_0$
  and the alternative hypothesis $\mathsf{H}_1$ for \ref{case1_cov}-\ref{case3_cov}, \ref{case1_data}-\ref{case5_data} and $(n_1, n_2,p)\in \{(300,300,100), \\ (200,200,100), (50,100,300)\}$. Note that LRT cannot be applied if $p>n_1$ or $p>n_2$ and ZZPZ requires $n_1=n_2$. Thus, these two methods do not appear in Table \ref{table_n1n2}. 
  All reported results are based on $500$ repetitions and we set the level to $\alpha=0.05.$

\noindent\textbf{Color-coding.}
  Under \(\mathsf{H}_0\), cells are color-coded based on how close the empirical rejection rate is to the nominal level \(\alpha\). Green indicates the rate falls within the 95\% confidence interval, yellow means it falls outside the 95\% but within the 99\% confidence interval, and red means it lies outside the 99.9\% confidence interval. These intervals are based on testing 
$
H_0: q = 0.05$ versus $H_1: q \neq 0.05
$
using 500 observations from a Bernoulli distribution with success probability \(q\), and are rounded to two digits.
To illustrate the power of the tests under the alternative hypothesis $\mathsf{H}_1$, we again use a three-color scheme in the tables. Cells are colored green for high power ($\geq 70\%$), yellow for moderate power ($30\%-69\%$), and red for low power ($<30\%$). 

\noindent\textbf{Numerical results under $\mathsf{H}_0$.} 
In the vast majority of cases under consideration, that is, 41 out of 45, the proposed method closely approximates the nominal level even in finite samples (green cells). In contrast, the performance of the competing methods is more mixed.
Case \ref{case1_data} refers to normally distributed data, which can be modelled both through ICMs and EMs. Thus, all methods are expected to perform well. Indeed, most approaches achieve reasonable level control here, though often slightly conservatively. 
For the other elliptical distributions \ref{case2_data}-\ref{case5_data}, however, the LRT, often fails to maintain the nominal level. The DHW, SY and ZZPZ methods generally perform acceptably, though sometimes conservative, but in several instances exhibit notable exceedances of the nominal level. For example, see case \ref{case3_data}\ref{case3_cov} for ZZPZ in Tables \ref{table_n300_p100} and \ref{table_n200_p100}, and case \ref{case2_cov} in Table \ref{table_n200_p100} for DHW.

\noindent\textbf{Numerical results under $\mathsf{H}_1$.} Under the alternative for scenario \ref{case2_cov}, all methods are very powerful, expect DHW for \ref{case5_data} in Table \ref{table_n1n2}. In Table \ref{table_n300_p100}, cases \ref{case1_cov} and \ref{case3_cov}, DHW and SY have in 8 out of 10 cases no significant power, while LRT and the proposed method are generally powerful. Note that in many of these cases, the high power of the LRT comes at the expense of poor level control under $\mathsf{H}_0$.
If the dimension is closer the sample sizes (Table \ref{table_n200_p100}) or even exceeds them  (Table \ref{table_n1n2}), all methods have low or moderate power for detecting the alternative in \ref{case1_cov} in most cases, with the proposed method being mostly in the moderate regime. 
Moreover, still in Tables \ref{table_n200_p100} and \ref{table_n1n2}, the proposed method is the only one to detect alternative in case \ref{case3_cov} reliably with high power.
\begin{figure}[h!]
    \centering
    \includegraphics[width=0.49\linewidth]{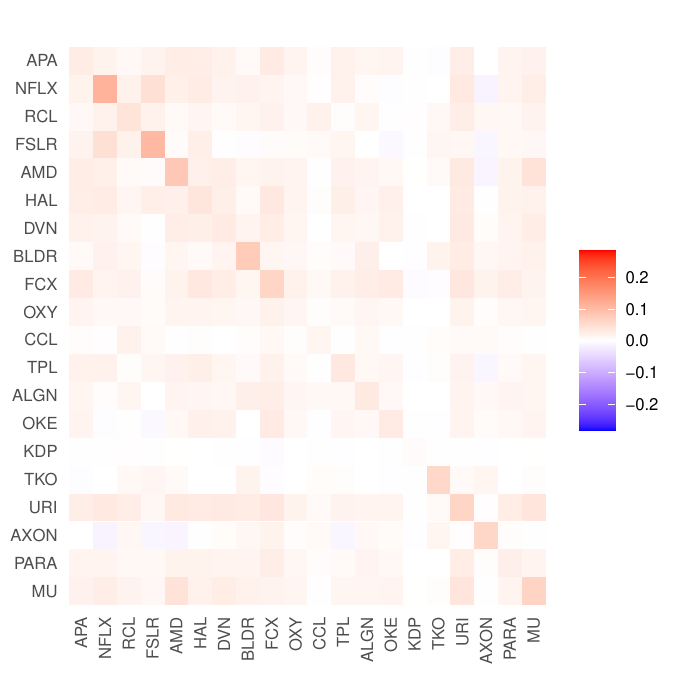}
    \includegraphics[width=0.49\linewidth]{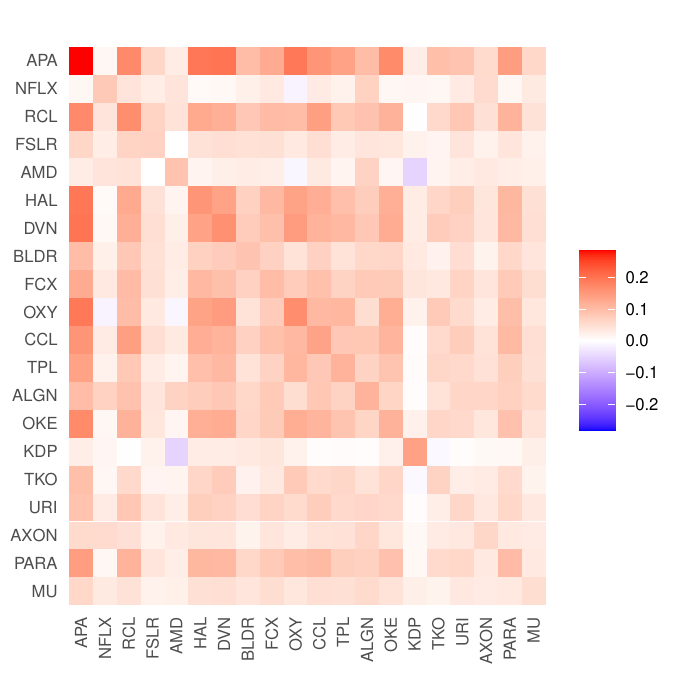}
    \caption{Heatmaps of the sample covariance matrices for the periods 2010 Q1–2017 Q2 (left figure) and 2017 Q3–2022 Q4 (right figure) of the S\&P 500 returns studied in Section \ref{sec_stock} We display only the top 20 stocks with the highest marginal variances. The full covariance matrices are of dimension $434\times 434.$
    }
    \label{fig:heatmaps}
\end{figure} 

\noindent\textbf{Summary.} The numerical study illustrates that tests rooted in the ICM cannot be expected to generalize to EM without suitable adaptations. Competing methods exhibit erratic behavior: they perform well in some scenarios, moderately or poorly in others. In contrast, the proposed method demonstrates reliable performance across a diverse range of covariance structures and data-generating distributions. This inconsistency among existing approaches highlights the need for methods with theoretical guarantees under elliptical models.

\subsection{Application to S\&P-500 stock data} \label{sec_stock}
Elliptical models are frequently used in the analysis of financial data \citep{gupta2013elliptically, mcneil2015quantitative, owen1983elliptical}. One key reason is their ability to capture joint movements in asset returns: in the elliptical model \eqref{eq_em}, the scalar random variable $\xi$ driving the dispersion is shared across all coordinates, which makes it well-suited for modeling events where many assets experience simultaneous increases or drops in magnitude.
In this section, we analyze stock return data for S\&P 500 downloaded from Yahoo Finance, covering the period from January 2010 to December 2022. From these, we compute log quarterly returns . To ensure comparability over time, we retain only those stocks with complete return histories over the entire period, yielding a dataset with $p=434$ stocks.
We then apply the test proposed in this work to assess a change in the covariance structure between 2010 Q1 - 2017 Q2 and 2017 Q3 - 2022 Q4, meaning $n_1=30, n_2=22$ (referred to as Case 1). To visualize the difference between the two periods, we show heatmaps of the sample covariance matrices in Figure \ref{fig:heatmaps}. For clarity, we restrict the display to the $20$ stocks with the highest marginal variances; note that the full covariance matrix is of size 
$434 \times 434$, so the overall change is more subtle.
We also study the periods between 2013 Q1 - 2016 Q1 and 2016 Q2 - 2022 Q4 with smaller samples sizes $n_1=13, n_2=27$ (referred to as Case 2). 

In the two cases, our test rejects the null of equal covariance matrices at both the 5\% and 10\% significance levels. In contrast, the tests of SY and DHW do not reject at either level, see Table \ref{table_stock} for on overview. Note that the ZZPZ-test cannot be applied since it requires $n_1=n_2$, while the LRT-test requires $p<n_1,n_2$.

\begin{table}[h!]
    \centering
    \begin{tabular}{ccc|cc}
         &  \multicolumn{2}{c|}{$\alpha=0.05$} & \multicolumn{2}{c}{$\alpha=0.1$} \\
         & Case 1 & Case 2 & Case 1 & Case 2 \\ 
        DHW &  \ding{55}  & \ding{55} & \ding{55} & \ding{55} \\
        SY   & \ding{55}  & \ding{55} & \ding{55} & \ding{55}\\
        Proposed   & \ding{51}  & \ding{51} & \ding{51}  & \ding{51}\\
    \end{tabular}
    \caption{Test decisions for \eqref{eq_hypothesis} for the S\&P 500 stock returns studied in Section \ref{sec_stock}. "\ding{51}" stands for a rejection of $\mathsf{H}_0$ by the corresponding method, while "\ding{55}" indicates a non-rejection. }
    \label{table_stock}
\end{table}

%%%%%%%%%%%%%%%%%%%%%%%%%%%%%%%%%%%%%%%%%%%%%%
%% Example with single Appendix:            %%
%%%%%%%%%%%%%%%%%%%%%%%%%%%%%%%%%%%%%%%%%%%%%%
% \begin{appendix}
% \section*{Title}\label{appn} %% if no title is needed, leave empty \section*{}.
% Appendices should be provided in \verb|{appendix}| environment,
% before Acknowledgements.

% If there is only one appendix,
% then please refer to it in text as \ldots\ in the \hyperref[appn]{Appendix}.
% \end{appendix}
%%%%%%%%%%%%%%%%%%%%%%%%%%%%%%%%%%%%%%%%%%%%%%
%% Example with multiple Appendixes:        %%
%%%%%%%%%%%%%%%%%%%%%%%%%%%%%%%%%%%%%%%%%%%%%%
% \newpage 
\begin{appendix}
 \section{Proofs of main results} \label{sec_proofs}

In our proofs, we use a notation to compare the size of two sequences of positive numbers $(a_n)_{n \in \mathbb{N}}$ and $(b_n)_{n \in \mathbb{N}}$. We write $a_n \lesssim b_n$, if there exists a non-random constant $C>0$, independent of $n$ such that $a_n \leq C\, b_n$ for all $n\in \N$.
Moreover, we suppress the dependence on $n$ of $\bfSigma_{n,i}$ in our notation, and set  $\bfSigma_{n,i}=:\bfSigma_i$ for $i\in \{1,2\}$. 
 
\subsection{Proof of Theorem \ref{thm_conv_non_feasible}} \label{sec_proof_main}

\noindent\textbf{Reduction of $T_n$.} 
Note that the statistic $T_n$ is invariant under location shift (see also \cite{lichen2012}), and thus, we may and will assume w.l.o.g. that $\boldsymbol{\mu}_{n,1} = \boldsymbol{\mu}_{n,2} = \mathbf{0}.$ In this case, the leading order terms in the variance of $T_n$ are contributed by the first terms in $U_{n,i}$ and $V_n$, respectively. That is, it suffices to analyze $\tilde T_n = U_{n,1,1} + U_{n,2,1} - 2 V_{1,n},$ where 
\begin{align}
    U_{n,i,1} & =  \frac{1}{ (n_i)_2}\sum_{1 \leq j , k \leq n_i}^\star \lb \bfx_j^{(i)^\top} \bfx_k^{(i)} \rb^2 
    \quad i = 1,2, \label{def_u_ni1} \\ 
    V_{1,n} & = \frac{1}{ n_1 n_2} \sum_{j=1}^{ n_1 } \sum_{k=1}^{n_2 } \lb \bfx_i^{(1)^\top} \bfx_j^{(2)} \rb^2
    .
    \label{def_v_n1}
\end{align}
More formally, the two main steps in the proof of Theorem \ref{thm_conv_non_feasible} lie in the verification of 
\begin{align}
    \label{eq_clt_tilde_T_n}
       \frac{1}{\sigma_n} \lb \tilde T_n - \| \bfSigma_1 - \bfSigma_2 \|_F^2 \rb  \cond Z \sim \mathcal{N}(0,1)
\end{align}
and 
\begin{align} \label{eq_remainder_negl}
    T_n - \tilde T_n = o_{\PR}(\sigma_n). 
\end{align}
The latter fact \eqref{eq_remainder_negl} will be proven in Section \ref{sec_proof_remainder}, while we proceed in the following with a preparatory step for the proof of \eqref{eq_clt_tilde_T_n} via the martingale CLT. 

\noindent\textbf{Preparations for the proof of \eqref{eq_clt_tilde_T_n}.}
Recall that we assume w.l.o.g. that $\boldsymbol{\mu}_{n,i}=\mathbf{0}$.
To facilitate the application of the martingale CLT, we proceed with some preparations. By the means of a martingale decomposition, we write $\tilde T_n$ as a sum of martingale differences. 
For this purpose, we define the triangular array of random variables
\begin{align*}
    \bfx_{j,n} = \bfx_j  = \begin{cases}
        \bfx_{j}^{(1)} & : \textnormal{ if } 1 \leq j \leq n_1, \\
        \bfx_{j - n_1 }^{(2)} & : \textnormal{ if } n_1 +1 \leq j \leq n_1 + n_2,
    \end{cases}
    \quad 1 \leq j \leq n_1 + n_2. 
\end{align*}
Moreover, let $\E_k$ denote the conditional expectation with respect to the $\sigma-$field $\mathcal{F}_k$ generated by $\bfx_1, \ldots, \bfx_k$, $1 \leq k \leq n_1 + n_2.$  More precisely, we have
\begin{align*}
    \tilde T_n =  T_{n,1} + T_{n,2} + T_{n,3},
\end{align*}
where 
\begin{align*}
      T_{n,1} & := \sum_{k=1}^{ n_1 + n_2}  T_{n,k,1}(t) 
      := \sum_{k=1}^{n_1 + n_2} ( \E_k - \E_{k-1} ) \left[  U_{n,1,1}(t) \right] ,\\
     T_{n,2} & := \sum_{k=1}^{n_1 + n_2}  T_{n,k,2}(t)  
     := \sum_{k=1}^{n_1 + n_2} ( \E_k - \E_{k-1} ) \left[  U_{n,2,1}(t)   \right] , \\
    T_{n,3} & := \sum_{k=1}^{n_1 + n_2}  T_{n,k,3} (t) 
    := - 2\sum_{k=1}^{n_1 + n_2} ( \E_k - \E_{k-1} ) \left[  V_{1,n}(t)  \right],
\end{align*}
where $ U_{n,i}, V_n $ are defined in \eqref{eq_def_U} and \eqref{eq_def_V}, respectively. 
 In the following, we will find more handy representations for $T_{n,k,i}$, $i\in \{1,2,3\}$,  by using basic properties of the conditional expectation  
For the first term, we have for $k\leq n_1$
\begin{align*}
    T_{n,k,1} & = \frac{1}{n_1 ( n_1 - 1 )} \sum_{\substack{i,j=1 \\ i \neq j }}^{ n_1 } ( \E_k - \E_{k-1} ) \lb \bfx_i^\top \bfx_j \rb^2 \\ 
    & =   \frac{2}{n_1 ( n_1 - 1 )} \sum_{\substack{i=1 \\ i \neq k }}^{ n_1 } ( \E_k - \E_{k-1} ) \lb \bfx_i^\top \bfx_k \rb^2
    \\ 
    & = \frac{2}{n_1 ( n_1 - 1 )} \sum_{\substack{i=1 }}^{k-1} ( \E_k - \E_{k-1} ) \lb \bfx_i^\top \bfx_k \rb^2 
    +  \frac{2}{n_1 ( n_1 - 1 )} \sum_{\substack{i=k+1 }}^{n_1} ( \E_k - \E_{k-1} ) \lb \bfx_i^\top \bfx_k \rb^2 
    \\ 
    & = \frac{2}{n_1 ( n_1 -1) } \Big\{ \sum_{i=1}^{k-1} \lb \lb \bfx_i^\top \bfx_k \rb^2 - \bfx_i^\top \bfSigma_1 \bfx_i \rb 
    + ( n_1 - k) \lb \bfx_k^\top \bfSigma_1 \bfx_k - \tr \bfSigma_1^2 \rb 
    \Big\} \\ 
    & = \frac{2}{n_1 ( n_1 -1) }  \sum_{i=1}^{k-1} \left\{  \bfx_k^\top \lb \bfx_i \bfx_i^\top - \bfSigma_1 \rb \bfx_k -  \tr \lb  \bfSigma_1 \lb \bfx_i \bfx_i^\top  - \bfSigma_1 \rb \rb  \right\}
    \\ & ~ +  \frac{2 ( k -1) }{n_1 ( n_1 -1) }
    \lb \bfx_k^\top \bfSigma_1 \bfx_k - \tr \bfSigma_1^2 \rb 
     +  \frac{2 ( n_1 - k ) }{n_1 ( n_1 -1) }
    \lb \bfx_k^\top \bfSigma_1 \bfx_k - \tr \bfSigma_1^2 \rb 
    \\ 
    & = \frac{2}{n_1 ( n_1 -1) }  \sum_{i=1}^{k-1} \left\{  \bfx_k^\top \lb \bfx_i \bfx_i^\top - \bfSigma_1 \rb \bfx_k -  \tr \lb  \bfSigma_1 \lb \bfx_i \bfx_i^\top  - \bfSigma_1 \rb \rb  \right\}
    \\ ~& +  \frac{2  }{n_1  }
    \lb \bfx_k^\top \bfSigma_1 \bfx_k - \tr \bfSigma_1^2 \rb.
\end{align*}
For $k \geq n_1 +1,$ it holds $T_{n,k,1} = 0.$
Similarly to $T_{n,1}$, we get for the second term $T_{n,k,2}=0$ for $k \leq n_1$ and for $k\geq n_1 +1$, we have 
\begin{align*}
    T_{n,k,2} & = \frac{2}{ n_2 ( n_2 -1) }  \sum_{i= n_1 +1}^{k-1} \left\{  \bfx_k^\top \lb \bfx_i \bfx_i^\top - \bfSigma_2 \rb \bfx_k -  \tr \bfSigma_2 \lb \bfx_i \bfx_i^\top  - \bfSigma_2 \rb \right\}
    \\ ~& +  \frac{2  }{n_2  }
    \lb \bfx_k^\top \bfSigma_2 \bfx_k - \tr \bfSigma_2^2 \rb .
\end{align*}
We now turn to the computation of the third term. Let us first assume that $k \leq n_1.$ In this case, we get
\begin{align*}
    - \frac{1}{2}  T_{n,k,3} & = \frac{1}{n_1 n_2 } \sum_{i=1}^{ n_1} \sum_{l= n_1+1}^{n_1+ n_2} ( \E_k - \E_{k-1} ) \tr \bfx_i \bfx_i^\top \bfx_l \bfx_l^\top \\
    & = \frac{1}{n_1 n_2 } \sum_{i=1}^{k-1} \sum_{l=n_2}^{n_1 + n_2} ( \E_k - \E_{k-1} ) \tr \bfx_i \bfx_i^\top \bfx_l \bfx_l^\top
    \\ & \quad + \frac{1}{n_1 n_2 } \sum_{i=k+1 }^{ n_1} \sum_{l=n_1+1}^{n_1 + n_2} ( \E_k - \E_{k-1} ) \tr \bfx_i \bfx_i^\top \bfx_l \bfx_l^\top \\ 
    & \quad + \frac{1}{n_1 n_2 }  \sum_{l=n_1+1}^{n_1 + n_2} ( \E_k - \E_{k-1} ) \tr \bfx_k \bfx_k^\top \bfx_l \bfx_l^\top \\ 
    & = \frac{1}{n_1 n_2 }  \sum_{l=n_1+1}^{n_1 + n_2} ( \E_k - \E_{k-1} ) \tr \bfx_k \bfx_k^\top \bfx_l \bfx_l^\top \\
    & = \frac{ 1}{n_1 } \lb \bfx_k^\top \bfSigma_2 \bfx_k - \tr \bfSigma_1 \bfSigma_2 \rb.
\end{align*}
Similar calculations show in the case $k \geq n_1+1$ that
\begin{align*}
   - \frac{1}{2} T_{n,k,3} &= \frac{1}{n_1 n_2 } \sum_{i=1}^{ n_1} \sum_{l=n_1+1}^{n_1 + n_2} ( \E_k - \E_{k-1} ) \tr \bfx_i \bfx_i^\top \bfx_l \bfx_l^\top \\
   & = \frac{1}{n_1 n_2 } \sum_{i=1}^{ n_1}  ( \E_k - \E_{k-1} ) \tr \bfx_i \bfx_i^\top \bfx_k \bfx_k^\top \\
   & = \frac{1}{n_1 n_2 }  \left[   \bfx_k \lb \sum_{i=1}^{ n_1} \bfx_i \bfx_i^\top \rb  \bfx_k  - 
   \tr \bfSigma_2 \lb \sum_{i=1}^{n_1} \bfx_i \bfx_i^\top \rb  
   \right]  \\
   &=  \frac{1}{n_1 n_2 }  \left[   \bfx_k \lb \sum_{i=1}^{n_1} \bfx_i \bfx_i^\top - \bfSigma_1 \rb  \bfx_k  - 
   \tr \bfSigma_2 \lb \sum_{i=1}^{n_1} \bfx_i \bfx_i^\top - \bfSigma_1 \rb  
   \right] \\  & \quad 
   +  \frac{1}{  n_2 } \left[ \bfx_k^\top \bfSigma_1 \bfx_k - \tr \bfSigma_1 \bfSigma_2
   \right].
\end{align*}
Now define 
\begin{align}
    \bfA_{l':l} = \sum_{i=l'}^{l} \lb  \bfx_i \bfx_i^\top - \bfSigma_{\kappa(l)} \rb , \quad 1 \leq l' \leq l \leq n_1 + n_2 , 
    \label{eq_def_A_1k}
\end{align}
where $\kappa(l)=1$ for $1 \leq l \leq n_1$ and $\kappa(l)=2$ for $n_1+1 \leq l \leq {n_1 + n_2}.$
Then, in summary, we have
\begin{align}
    T_{n,k} & := T_{n,k,1} + T_{n,k,2} + T_{n,k,3},
    \label{eq_t_nk_formula}
\end{align}
where for $1 \leq k \leq n_1$ and $n_1 +1 \leq k' \leq n_1 + n_2$,
\begin{align}
    T_{n,k}
    & =  \frac{2}{n_1 ( n_1 -1) }   \left\{  \bfx_k^\top \bfA_{1:(k-1)} \bfx_k -  \tr \bfSigma_1 \bfA_{1:(k-1)} \right\}
    + \frac{2  }{n_1  }
    \lb \bfx_k^\top \bfSigma_1 \bfx_k - \tr \bfSigma_1^2 \rb
   \nonumber \\ & \quad  - \frac{2}{n_1} \lb \bfx_k^\top \bfSigma_2 \bfx_k - \tr \bfSigma_1 \bfSigma_2 \rb
   \nonumber \\ & 
   =  \frac{2}{n_1 }   \Bigg\{  \bfx_k^\top \lb \frac{1}{n_1 - 1} \bfA_{1:(k-1)} + \bfSigma_1 - \bfSigma_{2} \rb \bfx_k \nonumber \\ & 
   -  \tr \bfSigma_1 \lb \frac{1}{ n_1 - 1} \bfA_{1:(k-1)} + \bfSigma_1 - \bfSigma_{2} \rb \Bigg\}
    , \label{eq_t_nk} \\
    T_{n,k'} & = \frac{2}{ n_2 ( n_2 -1) }  \left\{  \bfx_{k'}^\top \bfA_{(n_1+1):(k'-1)} \bfx_{k'} -  \tr \bfSigma_2 \bfA_{(n_1+1) : (k'-1) } \right\} \nonumber \\
    & \quad 
   +  \frac{2  }{n_2  }
    \lb \bfx_{k'}^\top \bfSigma_2 \bfx_{k'} - \tr \bfSigma_2^2 \rb -  \frac{2}{n_1 n_2 }  \left[   \bfx_{k'} \bfA_{1:n_1} \bfx_{k'}  - 
   \tr \bfSigma_2 \bfA_{1:n_1} 
   \right] \nonumber  \\ 
    & \quad  -  \frac{2}{ n_2 } \left[ \bfx_{k'}^\top \bfSigma_1 \bfx_{k'} - \tr \bfSigma_1 \bfSigma_2
   \right] \nonumber \\ 
   & = \frac{2}{  n_2  }  \Bigg\{  
   \bfx_{k'}^\top  \lb \frac{1}{ n_2 -1}  \bfA_{(n_1+1):(k'-1)} - \frac{1}{n_1} \bfA_{1:n_1} + \bfSigma_2 - \bfSigma_1 \rb  \bfx_{k'} \nonumber \\ 
   & \quad -  \tr \bfSigma_2 \lb \frac{1}{ n_2 -1}  \bfA_{(n_1+1):(k'-1)} - \frac{1}{n_1} \bfA_{1:n_1} + \bfSigma_2 - \bfSigma_1 \rb  \Bigg\}.  \label{eq_t_nk'}
\end{align}
We now turn to the computation of the covariance kernel. 
By the martingale CLT and \eqref{eq_t_nk_formula}, we need to compute 
\begin{align*}
    \sum_{k=1}^{n_1 + n_2} \sigma_{n,k}  := \sum_{k=1}^{n_1 + n_2} \E_{k-1} \left[  T_{n,k}^2 \right] . 
\end{align*}
We will show the following auxiliary results. Recall the definition of $\sigma_n^2$ in \eqref{eq_def_sigma_n}. 
\begin{lemma} \label{lem_cov_mean}
Under the assumptions of Theorem \ref{thm_conv_non_feasible}, it holds
    \begin{align*}
         & \E \left[ \sum_{k=1}^{n_1 + n_2}  \sigma_{n,k}^2 \right] 
          =  \sigma_n^2
    + o \lb \sigma_n^2 \rb .
    \end{align*}
\end{lemma}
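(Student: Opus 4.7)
The plan is to exploit that, conditionally on $\mathcal{F}_{k-1}$, each $T_{n,k}$ is a centered quadratic form in the elliptical vector $\bfx_k$: by \eqref{eq_t_nk}--\eqref{eq_t_nk'} one can write
\begin{align*}
T_{n,k} = \frac{2}{n_{\kappa(k)}} \Big\{ \bfx_k^\top M_k \bfx_k - \tr\lb \bfSigma_{\kappa(k)} M_k \rb \Big\},
\end{align*}
where $M_k$ is symmetric and $\mathcal{F}_{k-1}$-measurable, and splits into a deterministic signal part equal to $\pm (\bfSigma_1 - \bfSigma_2)$ plus one (if $k \leq n_1$) or two (if $k > n_1$) independent mean-zero noise pieces built from the $\bfA_{l':l}$ in \eqref{eq_def_A_1k}. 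Since $\bfx_k$ is independent of $\mathcal{F}_{k-1}$ and has the representation $\bfx_k = \xi_k \bfSigma_{\kappa(k)}^{1/2} \bfu_k$, combining the moments of $\xi_k$ from \eqref{eq_ass_xi} with the spherical identity $\E[(\bfu_k^\top B \bfu_k)^2] = [\tr^2(B) + 2\tr(B^2)]/[p(p+2)]$ yields, uniformly in deterministic symmetric $M$,
\begin{align*}
\Var\lb \bfx_k^\top M \bfx_k \rb = 2 \tr\lb (\bfSigma_{\kappa(k)} M)^2 \rb + \frac{\tau_{\kappa(k)}-2}{p} \tr^2\lb \bfSigma_{\kappa(k)} M \rb + R_n(M),
\end{align*}
where $R_n(M)$ is the $o(1/p)$-correction coming from $\E \xi_k^4 / p^2 - 1$. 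Applying this conditionally at $M = M_k$ gives an exact formula for $\sigma_{n,k}^2$.

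Taking unconditional expectations and expanding $\tr\{(\bfSigma_{\kappa(k)} M_k)^2\}$ and $\tr^2(\bfSigma_{\kappa(k)} M_k)$ into the signal and noise contributions, the mutual independence of disjoint $\bfA$-blocks and the centering $\E \bfA_{l':l} = \mathbf{0}$ wipe out every cross term, leaving only the squared-signal piece and the variances of the individual blocks. Summing the squared-signal piece over $k$ reproduces
\begin{align*}
\sum_{i=1,2} \Big\{ \frac{8}{n_i} \tr\lb \bfSigma_i (\bfSigma_1 - \bfSigma_2) \rb^2 + \frac{4(\tau_i-2)}{p\, n_i} \tr^2\lb \bfSigma_i (\bfSigma_1 - \bfSigma_2) \rb \Big\},
\end{align*}
i.e.\ the second and third lines of \eqref{eq_def_sigma_n}. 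For the noise contributions, a direct calculation using the same variance formula yields, for $\bfx$ elliptical with covariance $\bfSigma_j$,
\begin{align*}
\E \tr\lb \bfSigma_i (\bfx \bfx^\top - \bfSigma_j) \bfSigma_i (\bfx \bfx^\top - \bfSigma_j) \rb = \tr^2( \bfSigma_i \bfSigma_j ) + o\lb \tr^2(\bfSigma_i \bfSigma_j) \rb ,
\end{align*}
where the error hides trace terms such as $\tr((\bfSigma_i \bfSigma_j)^2)$ that are negligible by Assumption \ref{ass_cov}. Combined with $\sum_{k} (k - l')/(n_i - 1)^2 \sim 1/2$, the variance of $(n_i-1)^{-1} \bfA_{\cdot}$ contributes the $\frac{4}{n_i^2} \tr^2(\bfSigma_i^2)$-terms, while the extra cross-sample block $-n_1^{-1} \bfA_{1:n_1}$ appearing for $k > n_1$ produces the remaining $\frac{8}{n_1 n_2} \tr^2(\bfSigma_1 \bfSigma_2)$-term. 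Collecting everything recovers $\sigma_n^2$.

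The main obstacle is the bookkeeping in the second step: for $k > n_1$ the matrix $M_k$ is a sum of the deterministic signal plus three independent mean-zero random pieces, so the expansion of $\tr\{(\bfSigma_{\kappa(k)} M_k)^2\}$ generates many cross terms, which must be systematically classified either as leading (matching the entries of $\sigma_n^2$) or as a remainder to be absorbed in $o(\sigma_n^2)$. Proving negligibility requires repeated applications of Assumption \ref{ass_cov} to discard mixed traces of the form $\tr(\bfSigma_i \bfSigma_j \bfSigma_k \bfSigma_l)$ against $\tr(\bfSigma_i \bfSigma_j) \tr(\bfSigma_k \bfSigma_l)$, together with \eqref{eq_ass_xi} to handle the $R_n(M_k)$-contribution. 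A subtle point is that $\sigma_n^2$ itself aggregates terms of potentially different orders, so every remainder must be compared against the full $\sigma_n^2$ and not merely its largest component.
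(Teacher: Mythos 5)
Your proposal is correct and follows essentially the same route as the paper: recognize $\E_{k-1}[T_{n,k}^2]$ as the conditional variance of a quadratic form with $\mathcal{F}_{k-1}$-measurable matrix $\bfB_{1,k}$ (resp. $\bfB_{2,k}$), apply the elliptical second-moment identity (Lemma \ref{lem_quad_form_formula}\ref{lem_part1}), expand the matrix into signal plus independent centered $\bfA$-blocks so that cross terms vanish in expectation, evaluate the surviving pieces, and absorb the $O(1/p)$ corrections via Assumption \ref{ass_cov} and \eqref{eq_ass_xi}. The only cosmetic differences are that the paper packages the block-noise evaluations as auxiliary Lemmas \ref{lem_tr_sq} and \ref{lem_tr}, and for $k>n_1$ the matrix $M_k$ contains two (not three) centered random pieces plus the deterministic signal.
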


\begin{lemma}
    \label{lem_cov_var}
   Under the assumptions of Theorem \ref{thm_conv_non_feasible}, it holds
    \begin{align*}
        \E \left[  \sum_{k=1}^{n_1 + n_2}  \sigma_{n,k}^2 \right] - \sum_{k=1}^{n_1 + n_2}  \sigma_{n,k}^2 = o_{\PR}(\sigma_n^2). 
    \end{align*}
\end{lemma}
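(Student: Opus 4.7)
By Chebyshev's inequality, the claim reduces to proving
\begin{align*}
\Var\!\Bigl(\sum_{k=1}^{n_1+n_2}\sigma_{n,k}^2\Bigr)=o(\sigma_n^4),
\end{align*}
which I would carry out by a second martingale decomposition with respect to the same filtration $(\mathcal{F}_k)_k$ already used in the proof of Theorem \ref{thm_conv_non_feasible}.

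\textbf{Explicit form for $\sigma_{n,k}^2$.} From the identities \eqref{eq_t_nk}--\eqref{eq_t_nk'}, each increment $T_{n,k}$ is a centered quadratic form
\begin{align*}
T_{n,k}=\frac{2}{n_{\kappa(k)}}\bigl\{\bfx_k^\top M_k\bfx_k-\tr(\bfSigma_{\kappa(k)}M_k)\bigr\},
\end{align*}
where $M_k$ is an $\mathcal{F}_{k-1}$-measurable symmetric matrix assembled from the cumulative matrix $\bfA_{1:(k-1)}$ (and, for $k>n_1$, also $\bfA_{1:n_1}$) together with the deterministic shift $\bfSigma_1-\bfSigma_2$. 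Substituting the elliptical representation $\bfx_k=\xi_k\bfSigma_{\kappa(k)}^{1/2}\bfu_k$ and applying the standard moment identities for $\bfu_k$ uniform on the unit sphere, together with the moment assumptions on $\xi_k$ in \eqref{eq_ass_xi}, expresses $\sigma_{n,k}^2=\E_{k-1}[T_{n,k}^2]$, up to negligible relative error, as
\begin{align*}
\sigma_{n,k}^2\approx\frac{4}{n_{\kappa(k)}^2}\Bigl\{2\tr\bigl((M_k\bfSigma_{\kappa(k)})^2\bigr)+\tfrac{\tau_{\kappa(k)}-2}{p}\tr^2(M_k\bfSigma_{\kappa(k)})\Bigr\},
\end{align*}
which is a quadratic polynomial in the entries of the $\bfA$-increments.

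\textbf{Variance via a second martingale decomposition.} Writing
\begin{align*}
\sum_{k=1}^{n_1+n_2}\sigma_{n,k}^2-\E\!\sum_{k=1}^{n_1+n_2}\sigma_{n,k}^2=\sum_{j=1}^{n_1+n_2}(\E_j-\E_{j-1})\!\Bigl[\sum_{k>j}\sigma_{n,k}^2\Bigr]
\end{align*}
and exploiting orthogonality of martingale differences reduces the target variance to $\sum_j\E\bigl[((\E_j-\E_{j-1})\sum_{k>j}\sigma_{n,k}^2)^2\bigr]$. Using the decomposition $\bfA_{1:(k-1)}=\bfA_{1:(j-1)}+(\bfx_j\bfx_j^\top-\bfSigma_{\kappa(j)})+\bfA_{(j+1):(k-1)}$ together with the quadratic dependence of $\sigma_{n,k}^2$ on $\bfA_{1:(k-1)}$, the increment $(\E_j-\E_{j-1})\sigma_{n,k}^2$ depends on $\bfx_j$ only through linear and quadratic forms contracted against products of $\bfSigma_1,\bfSigma_2$ and the $\bfA$-residuals. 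Taking conditional expectations with the same sphere and $\xi$-moment identities reduces the problem to bounding deterministic trace products.

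\textbf{Main obstacle.} The principal difficulty is controlling the resulting fourth-order trace expressions uniformly in $j,k$; the decisive input is assumption \ref{ass_cov}, which supplies the bound $\tr(\bfSigma_i\bfSigma_j\bfSigma_k\bfSigma_l)=o(\tr(\bfSigma_i\bfSigma_j)\tr(\bfSigma_k\bfSigma_l))$ needed to push the double sum over $j,k$ below the threshold $\sigma_n^4$. A secondary obstacle is the cross-sample contribution for $k>n_1$, where $M_k$ additionally involves $\bfA_{1:n_1}$ and thereby couples the two populations; this term is structurally analogous to the within-sample terms but requires more careful bookkeeping, so I would first treat the within-sample case $k\le n_1$ and then transfer the argument to the cross-sample case.
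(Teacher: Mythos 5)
Your proposal correctly reduces the claim to $\Var\bigl(\sum_k \sigma_{n,k}^2\bigr)=o(\sigma_n^4)$ via Chebyshev and correctly identifies the explicit leading-order form of $\sigma_{n,k}^2$. The technical route you propose — a second martingale decomposition $\sum_j(\E_j-\E_{j-1})\sum_{k>j}\sigma_{n,k}^2$ and then orthogonality of increments — is genuinely different from the paper's. The paper instead expands $\Var\bigl(\sum_k\tr(\bfSigma_1\bfB_{1,k})^2\bigr)$ directly into variance plus covariance sums, carries out an explicit combinatorial expansion over the four free indices using independence of the $\bfx_i$, and reduces everything to bounding the variances of $(\bfx_1^\top\bfSigma_1\bfx_1)^2$, $(\bfx_1^\top\bfSigma_1\bfx_2)^2$ and $\bfx_1^\top\bfSigma_1^3\bfx_1$ via Lemma \ref{lem_quad_form_formula}, Lemma \ref{lem_quad_form_est} and assumption \ref{ass_cov}. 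Your martingale route would in the end need exactly the same trace estimates, but it buys you automatic cancellation of cross-$k$ terms through orthogonality, at the price of having to compute $(\E_j-\E_{j-1})\sigma_{n,k}^2$ — which requires integrating out not just $\bfx_j$ but also $\bfx_{j+1},\dots,\bfx_{k-1}$ hidden in $\bfA_{(j+1):(k-1)}$, a non-trivial step your sketch glosses over.

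Two gaps are worth flagging. First, you present $\sigma_{n,k}^2$ "up to negligible relative error," but a blanket relative-error claim does not by itself control the variance of the discarded remainder. The paper confronts the exact expression from \eqref{a1} and \eqref{a10} and handles the $\frac{1}{p}\tr^2(\bfSigma_i\bfA_{\cdot:\cdot})$ piece separately (equations \eqref{q1}--\eqref{eq_second_line} with Lemma \ref{lem_tr_sq}); you would need an analogous argument. Second, the decisive step — expanding the martingale increment and pushing the resulting fourth-order trace products below $\sigma_n^4$ — is deferred; the paper shows concretely how assumption \ref{ass_cov} together with the lower bounds $\sigma_n^4 \gtrsim n_1^{-4}\tr^4\bfSigma_1^2$ and \eqref{eq_ref_lower_bound_sigma} closes the argument, and your proof would need the same machinery. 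As a strategy your approach is sound and workable, but it remains a sketch.
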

From Lemma \ref{lem_cov_mean} and Lemma \ref{lem_cov_var}, it follows that 
\begin{align}
    \label{eq_conv_var}
   \frac{ \sum_{k=1}^{n_1 + n_2}  \sigma_{n,k}^2  }{ \sigma_n^2} \conp  1.
\end{align}
 The following lemma says that $\sum_{k=1}^{n_1 + n_2} T_{n,k} / \sigma_n $ satisfies the Lindeberg-type condition. 
\begin{lemma} 
\label{lem_lindeberg}
Under the assumptions of Theorem \ref{thm_conv_non_feasible}, it holds
    \begin{align*}
        \frac{   \sum_{k=1}^{n_1 + n_2}  \E [ T_{n,k}^4  ] }{ \sigma_n^4} = o(1) .
    \end{align*}
\end{lemma}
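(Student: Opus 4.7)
The plan is to use the explicit martingale-difference representations \eqref{eq_t_nk} and \eqref{eq_t_nk'}, which express
\begin{align*}
T_{n,k} = \frac{2}{n_{\kappa(k)}}\bigl\{\bfx_k^\top M_k \bfx_k - \tr(\bfSigma_{\kappa(k)} M_k)\bigr\}
\end{align*}
as a centered quadratic form in the elliptical vector $\bfx_k$, whose kernel $M_k$ is $\mathcal{F}_{k-1}$-measurable and decomposes into a deterministic signal $\pm(\bfSigma_1-\bfSigma_2)$ plus a normalized sum of $O(n)$ i.i.d.\ centered outer products coming from $\bfA_{1:(k-1)}$ (and, for $k\geq n_1+1$, also $\bfA_{1:n_1}$). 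Since $\bfx_k$ is independent of $\mathcal{F}_{k-1}$, my strategy is to first derive a conditional fourth-moment bound in terms of traces of $M_k$, then pass to unconditional expectations, and finally sum over $k$ and compare with $\sigma_n^4$.

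For the conditional step, writing $\bfx_k = \xi_k \bfSigma_{\kappa(k)}^{1/2}\bfu_k$ and $\bfB_k := \bfSigma_{\kappa(k)}^{1/2} M_k \bfSigma_{\kappa(k)}^{1/2}$, I would split
\begin{align*}
\bfx_k^\top M_k \bfx_k - \tr(\bfSigma_{\kappa(k)} M_k) = \Bigl(\frac{\xi_k^2}{p}-1\Bigr)\tr(\bfB_k) + \frac{\xi_k^2}{p}\bigl(p\,\bfu_k^\top\bfB_k\bfu_k - \tr(\bfB_k)\bigr)
\end{align*}
into a radial and an angular piece. The radial piece is controlled by \eqref{eq_ass_xi}: its eighth-moment bound combined with Lyapunov interpolation bounds $\E[(\xi_k^2/p-1)^4]$, while the angular piece is handled via the Gaussian representation $\bfu_k \stackrel{d}{=} \bfz_k/\|\bfz_k\|$ with $\bfz_k \sim \mathcal{N}(0,\bfI_p)$ together with the standard Isserlis-type identities for fourth moments of Gaussian quadratic forms. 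The resulting conditional bound takes the form
\begin{align*}
\E\bigl[T_{n,k}^4 \bigm| \mathcal{F}_{k-1}\bigr] \lesssim n^{-4}\bigl\{\E[(\xi_{k}^2/p - 1)^4]\,\tr(\bfB_k)^4 + \tr(\bfB_k^2)^2\bigr\}.
\end{align*}

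For the unconditional step, I would take expectations and expand $M_k$ into signal and random parts. Since the random part is a normalized sum of $O(n)$ i.i.d.\ centered elliptical outer products, standard moment identities together with \ref{ass_cov} (which forces four-matrix traces $\tr(\bfSigma_{i_1}\cdots\bfSigma_{i_4})$ to be subdominant to products of two-matrix traces) show that the leading contributions to $\E[\tr(\bfB_k)^4]$ and $\E[\tr(\bfB_k^2)^2]$ are $\tr^4(\bfSigma_{\kappa(k)}(\bfSigma_1-\bfSigma_2))$ and $\tr^2[\{\bfSigma_{\kappa(k)}(\bfSigma_1-\bfSigma_2)\}^2]$, respectively, with the random part contributing only lower-order corrections (roughly $n^{-1}\tr^2(\bfSigma_i\bfSigma_j)$). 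Summing over the $O(n)$ values of $k$ inserts a prefactor of $n$, and a term-by-term comparison with the squared components of $\sigma_n^2$ in \eqref{eq_def_sigma_n} then produces $\sum_k \E[T_{n,k}^4] = o(\sigma_n^4)$, as required.

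I expect the main obstacle to be the precise bookkeeping in this last comparison. The $\E[(\xi_k^2/p-1)^4]\,\tr^4(\bfB_k)$ contribution, which is absent in the ICM setting of \cite{lichen2012}, must be absorbed by the squared $(\tau_i-2)/(pn_i)$ component of $\sigma_n^2$, and this matching exploits the full strength of the eighth-moment condition in \eqref{eq_ass_xi} rather than the fourth-moment assumption that suffices in the ICM/Gaussian case. A secondary difficulty is ensuring that the cross terms generated by expanding $M_k$ into signal and random parts---in particular, those involving several copies of $\bfA_{1:l}$---are dominated by the right two-matrix-trace products via \ref{ass_cov}, which is needed to avoid unwanted growth in $p$.
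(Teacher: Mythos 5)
Your overall framework matches the paper's proof: use the martingale-difference formulas \eqref{eq_t_nk} and \eqref{eq_t_nk'}, apply a conditional fourth-moment bound for centered quadratic forms in the elliptical vector $\bfx_k$, expand the $\mathcal{F}_{k-1}$-measurable kernel into a deterministic signal $\pm(\bfSigma_1-\bfSigma_2)$ plus a normalized sum of i.i.d.\ centered outer products, and then compare with $\sigma_n^4$ term by term using \ref{ass_cov}. The conditional bound you sketch via the radial/angular split of $\bfx_k = \xi_k\bfSigma^{1/2}\bfu_k$ is precisely Lemma~\ref{lem_quad_form_est}, which the paper cites rather than re-derives; your Gaussian-representation route to it is fine.

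However, your speculation about where the critical comparison happens would lead you astray. You write that the $\E[(\xi_k^2/p-1)^4]\tr^4(\bfB_k)$ contribution ``must be absorbed by the squared $(\tau_i-2)/(pn_i)$ component of $\sigma_n^2$.'' That component vanishes identically when $\tau_i = 2$ (the Gaussian case), so any bound leaning on it would be vacuous there. The paper instead uses the two lower bounds $\sigma_n^4 \gtrsim n_i^{-4}\tr^4\bfSigma_i^2$ and $\sigma_n^4 \gtrsim n_i^{-2}\tr^2\bigl\{\bfSigma_i(\bfSigma_1-\bfSigma_2)\bigr\}^2$. The latter is not immediate from the definition of $\sigma_n^2$; it holds because the Frobenius term and the $(\tau_i-2)$ term combine to
\begin{align*}
\frac{8}{n_i}\tr\bigl\{\bfSigma_i(\bfSigma_1-\bfSigma_2)\bigr\}^2 + \frac{4(\tau_i-2)}{pn_i}\tr^2\bigl(\bfSigma_i(\bfSigma_1-\bfSigma_2)\bigr) \geq \frac{4\min(\tau_i,2)}{n_i}\tr\bigl\{\bfSigma_i(\bfSigma_1-\bfSigma_2)\bigr\}^2,
\end{align*}
using $\tr^2(A)\le p\tr(A^2)$, and $\tau_i>0$ is a fixed constant. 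The deterministic-signal pieces (the paper's $S_{n,1,2}$ and $S_{n,2,2}$) and the random-kernel pieces ($S_{n,1,1}$ and $S_{n,2,1}$) are then matched against these bounds, with the $\tr^4$ contribution reduced to a $\tr^2(\cdot)^2$ contribution via $\tr^4(\cdot)\le p^2\tr^2(\cdot)^2$. Your proposal should replace the proposed matching against the $(\tau_i-2)^2$ term by this comparison. Also note that for the Lindeberg step only $q=4$ is used in Lemma~\ref{lem_quad_form_est}, so the full eighth-moment condition is not yet exploited here; it is needed later (for instance in the variance calculations of Lemma~\ref{lem_cov_var}).
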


From \eqref{eq_conv_var} and Lemma \ref{lem_lindeberg}, we conclude \eqref{eq_clt_tilde_T_n} by an application of the martingale CLT \citep[Theorem 18.1]{billingsley1999}. Thus, the next task lies in verifying the lemmas stated in this section. 

\subsubsection{Proofs of Lemma \ref{lem_cov_mean}, Lemma \ref{lem_cov_var} and Lemma \ref{lem_lindeberg}}
In this section, we give the proofs of the auxiliary results needed in the proof of Theorem \ref{thm_conv_non_feasible}. 
\begin{proof}[Proof of Lemma \ref{lem_cov_mean}]
We decompose
\begin{align*}
    \sum_{k=1}^{n_1 + n_2} \sigma_{n,k}^2  = 
    \sum_{k=1}^{n_1} \E_{k-1} \left[ T_{n,k}^2\right]
    + \sum_{k=n_1+1}^{n_1 + n_2} \E_{k-1} \left[ T_{n,k}^2\right]
\end{align*}
Let $k \leq n_1$ and define
\begin{align} \label{eq_def_B1}
    \bfB_1 = \bfB_{1,k} = \lb \frac{1}{ n_1 - 1} \bfA_{1:(k-1)} + \bfSigma_1 - \bfSigma_{2} \rb. 
\end{align}
Using \eqref{eq_t_nk}, Lemma \ref{lem_quad_form_formula} \ref{lem_part1} and Assumption \ref{ass_model}, we have 
\begin{align}
& \E_{k-1} [ T_{n,k}^2(t) ] \nonumber \\
   &=  \frac{4}{n_1^2  }\E_{k-1} \lb \bfx_k^\top \bfB_1 \bfx_k 
    - \tr \bfSigma_1 \bfB_1 \rb^2 \nonumber \\  
    & = \frac{4}{n_1^2  } \Big\{ \frac{\E [ \xi_{1,1}^4 ]}{p(p+2)} \lb \lb \tr  \bfSigma_1 \bfB_1 \rb ^2  + 2 \tr \lb \bfSigma_1 \bfB_1 \rb^2 \rb - \lb  \tr  \bfSigma_1 \bfB_1 \rb^2 \Big\}. \label{a1}
\end{align}
Note that 
\begin{align*}
    \E \tr \lb \bfSigma_1 \bfB_1 \rb^2 
   &  = \frac{1}{ ( n_1 - 1 ) ^2 } \E \left[ \tr \lb  \bfSigma_1 \bfA_{1:(k-1)} \rb^2 \right] + \tr \left\{ \bfSigma_1 \lb \bfSigma_1 - \bfSigma_2 \rb \right\} ^2, \\ 
    \E \tr^2 \lb \bfSigma_1 \bfB_1 \rb 
   &  = \frac{1}{ ( n_1 - 1 ) ^2 } \E \left[ \tr^2 \lb  \bfSigma_1 \bfA_{1:(k-1)} \rb \right] + \tr^2 \lb \bfSigma_1 \lb \bfSigma_1 - \bfSigma_2 \rb \rb .
\end{align*}
Using Lemma \ref{lem_tr_sq}, Lemma \ref{lem_tr} and Assumption \ref{ass_cov}, we see that the term 
\begin{align} \label{eq_1stcase}
\frac{1}{ n_1^2 ( n_1 - 1 ) ^2 } \lb \frac{\E [ \xi_{1,1}^4 ]}{p(p+2)} - 1\rb   \E \left[ \tr^2 \lb  \bfSigma_1 \bfA_{1:(k-1)} \rb \right]
\end{align}
does asymptotically not contribute to the mean of \eqref{a1}.
Using these results, we get for the mean of \eqref{a1}
\begin{align}
\sum_{k=1}^{n_1} \E \left[ \E_{k-1} [ T_{n,k}^2(t) ] \right] 
&= \E \left[ \frac{8}{n_1^2 (n_1 - 1)^2 } \sum_{k=1}^{n_1} \tr \lb \bfSigma_1 \bfA_{1: ( k-1 ) }\rb^2 \right] + \frac{8}{n_1} \tr  \left\{ \bfSigma_1 \lb \bfSigma_1 - \bfSigma_2 \rb   \right\} ^2 \nonumber \\ 
 & + \frac{4}{n_1} \lb \frac{\E [ \xi_{1,1}^4 ]}{p(p+2)} - 1\rb  \tr^2 \lb \bfSigma_1 \lb \bfSigma_1 - \bfSigma_2 \rb \rb \nonumber \\
 & \quad 
    + o \lb \sum_{k=1}^{n_1} \E \left[ \E_{k-1} [ T_{n,k}^2(t) ] \right] \rb \\ 
& = 
    \frac{4}{n_1^2} \tr^2 \bfSigma_1^2 
    + \frac{8}{n_1} \tr  \left\{ \bfSigma_1 \lb \bfSigma_1 - \bfSigma_2 \rb   \right\} ^2 \nonumber \\ 
    & + \frac{4}{n_1} \lb \frac{\E [ \xi_{1,1}^4 ]}{p(p+2)} - 1\rb  \tr^2 \lb \bfSigma_1 \lb \bfSigma_1 - \bfSigma_2 \rb \rb \nonumber \\ 
    & \quad 
    + o \lb \sum_{k=1}^{n_1} \E \left[ \E_{k-1} [ T_{n,k}^2(t) ] \right] \rb  ,
     \label{e1}
\end{align}
where used Lemma \ref{lem_tr} for the last step. 
We now turn to the case $k\geq n_1+1.$ For this purpose, we define
\begin{align}
    \bfB_{2,k} = \bfB_2 = \lb \frac{1}{ n_2 -1}  \bfA_{(n_1+1):(k-1)} - \frac{1}{n_1} \bfA_{1:n_1} + \bfSigma_2 - \bfSigma_1 \rb.
     = : \mathbf{C}_2 + \bfSigma_2 - \bfSigma_1. \label{eq_def_C}
\end{align}
Using \eqref{eq_t_nk'}, Lemma \ref{lem_quad_form_formula} \ref{lem_part1} and Assumption \ref{ass_model}, we have 
\begin{align}
& \E_{k-1} [ T_{n,k}^2(t) ] \nonumber \\
   &=  \frac{4}{n_2^2  }\E_{k-1} \lb \bfx_k^\top \bfB_2 \bfx_k 
    - \tr \bfSigma_2 \bfB_2 \rb^2 \nonumber \\  
    & = \frac{4}{n_2^2  } \Big\{ \frac{\E [ \xi_{2,1}^4 ]}{p(p+2)} \lb \lb \tr  \bfSigma_2 \bfB_2 \rb ^2  + 2 \tr \lb \bfSigma_2 \bfB_2 \rb^2 \rb - \lb  \tr  \bfSigma_2 \bfB_2 \rb^2 \Big\}. \label{a10}
\end{align}
Now we want to identify the dominating terms in the mean of \eqref{a10}. To this end, we note that 
\begin{align} \label{eq_extra_term}
    \frac{4}{n_2^2  } \lb \frac{\E [ \xi_{2,1}^4 ]}{p(p+2)} - 1 \rb   \E \left[ \tr^2  \bfSigma_2 \bfC_2  \right] 
\end{align}
does asymptotically not contribute to the mean of \eqref{a10},
which follows similarly to the analysis of \eqref{eq_1stcase} in the first case of this proof. 
Using this fact and $\E[\xi_{2,1}^4]=p^2 + \mathcal{O}(p)$, we see that the dominating term in the mean of \eqref{a10} will be 
\begin{align}
    & \frac{8}{n_2^2 } \E  \tr \lb \bfSigma_2 \bfB_2 \rb^2 
    +  \frac{4}{( n- n_1)^2} \lb \frac{\E [ \xi_{2,1}^4 ]}{p(p+2)} - 1\rb  \tr^2 \lb \bfSigma_2 \lb \bfSigma_1 - \bfSigma_2 \rb \rb \nonumber 
    \\ & = \frac{8}{n_2^2 } \E \tr \lb \bfSigma_2 \mathbf{C}_2 \rb^2 
    + \frac{8}{n_2^2 } \E \tr \left\{ \bfSigma_2 \lb \bfSigma_1 - \bfSigma_2 \rb \right\}^2
    + \frac{16}{n_2^2 } \E \tr \lb \bfSigma_2 \mathbf{C}_2 \lb \bfSigma_2 - \bfSigma_1 \rb \rb \nonumber \\
    & \quad  +  \frac{4}{( n- n_1)^2} \lb \frac{\E [ \xi_{2,1}^4 ]}{p(p+2)} - 1\rb  \tr^2 \lb \bfSigma_2 \lb \bfSigma_1 - \bfSigma_2 \rb \rb \\ 
    & =  \frac{8}{n_2^2 } \E \tr \lb \bfSigma_2 \mathbf{C}_2 \rb^2 
    + \frac{8 }{n_2^2 }  \tr \left\{ \bfSigma_2 \lb \bfSigma_1 - \bfSigma_2 \rb \right\}^2 \nonumber 
    \\ & \quad + \frac{4}{( n- n_1)^2} \lb \frac{\E [ \xi_{2,1}^4 ]}{p(p+2)} - 1\rb  \tr^2 \lb \bfSigma_2 \lb \bfSigma_1 - \bfSigma_2 \rb \rb \nonumber \\ 
    &= \frac{8}{ n_2^2 ( n_2 - 1 )^2 } \E \tr \lb \bfSigma_2 \bfA_{(n_1+1):(k-1)} \rb^2 
    +  \frac{8}{ n_2^2 n_1^2 } \E \tr \lb \bfSigma_2 \bfA_{1:n_1} \rb ^2 
    \label{h1}
    \\ & \quad  + \frac{8}{ n_2^2 }  \tr \left\{ \bfSigma_2 \lb \bfSigma_1 - \bfSigma_2 \rb \right\}^2
     +  \frac{4}{( n- n_1)^2} \lb \frac{\E [ \xi_{2,1}^4 ]}{p(p+2)} - 1\rb  \tr^2 \lb \bfSigma_2 \lb \bfSigma_1 - \bfSigma_2 \rb \rb, \nonumber 
\end{align}
where we used 
\begin{align*}
    \E \tr \lb \bfSigma_2 \mathbf{C}_2 \lb \bfSigma_2 - \bfSigma_1 \rb \rb = 0.
\end{align*}
Now note that the two summands in \eqref{h1} are computed in Lemma \ref{lem_tr}. 
This gives
\begin{align}
    & \sum_{k= n_1+1}^{n_1 + n_2} \E \E_{k-1} [T_{n,k}^2] \nonumber \\ 
    & = \frac{4}{n_2^2} \tr^2 \bfSigma_2^2
    + \frac{8}{n_2 n_1} \tr^2 \bfSigma_2 \bfSigma_1 \nonumber \\ & \quad +
    \frac{8}{ n_2 }  \tr \left\{ \bfSigma_2 \lb \bfSigma_1 - \bfSigma_2 \rb \right\}^2
     +  \frac{4}{n- n_1} \lb \frac{\E [ \xi_{2,1}^4 ]}{p(p+2)} - 1\rb  \tr^2 \lb \bfSigma_2 \lb \bfSigma_1 - \bfSigma_2 \rb \rb
    \nonumber \\ & \quad 
    + o \lb \sum_{k= n_1+1}^n \E \E_{k-1} [T_{n,k}^2]  \rb . \label{e2}
\end{align}
The proof concludes by combining \eqref{e1} and \eqref{e2}.
\end{proof}
We continue with the proof of Lemma \ref{lem_lindeberg}. 
\begin{proof}[Proof of Lemma \ref{lem_lindeberg}]
    To begin with, we consider the summands for $k\leq n_1.$ Using the formula for $T_{n,k}$ in \eqref{eq_t_nk} and the definition of $\bfB_{1,k}$ in \eqref{eq_def_B1}, we obtain
    \begin{align}
        \sum_{k=1}^{n_1} \E [ T_{n,k}^4]
        &= \frac{4}{n_1^4} \sum_{k=1}^{n_1} \E \lb \bfx_k^\top \bfB_{1,k} \bfx_k - \tr \bfSigma_1 \bfB_{1,k} \rb^4 \nonumber \\
        & \lesssim \frac{1}{n_1^4} \sum_{k=1}^{n_1} \E \left[  \tr^2 \lb \bfSigma_1 \bfB_{1,k} \rb^2 \right] + \frac{1}{n_1^4} o \lb \frac{1}{p}\rb  \sum_{k=1}^{n_1}  \E \left[ \tr^4 \lb \bfSigma_1 \bfB_{1,k} \rb \right]  , 
        \label{eq_est_t_4thmom}
    \end{align}
    where we used Lemma \ref{lem_quad_form_est} for the inequality. 
    First, we concentrate on bounding the first term, namely,
    \begin{align*}
        S_{n,1} = \frac{1}{n_1^4} \sum_{k=1}^{n_1} \E \left[  \tr^2 \lb \bfSigma_1 \bfB_{1,k} \rb^2 \right] .
    \end{align*}
    By the definition of $\bfB_{1,k}$ in \eqref{eq_def_B1}, we obtain
    \begin{align*}
        S_{n,1} \lesssim S_{n,1,1} + S_{n,1,2},
    \end{align*}
    where
    \begin{align*}
        S_{n,1,1} & = \frac{1}{n_1^4 ( n_1-1 )^4} \sum_{k=1}^{n_1} \E \left[  \tr^2 \lb \bfSigma_1 \bfA_{1:(k-1)} \rb^2 \right] , \\
        S_{n,1,2} & = \frac{1}{n_1^3}   \tr^2 \lb \bfSigma_1 \lb \bfSigma_1 - \bfSigma_2 \rb  \rb^2 .
    \end{align*}
    From \eqref{eq_def_sigma_n}, we see that 
    \begin{align*}
        \sigma_n^4 \gtrsim \frac{1}{n_1^2} \tr^2 \left\{ \bfSigma_1 ( \bfSigma_1 - \bfSigma_2) \right\}^2. 
    \end{align*}
    This implies that $S_{n,1,2} = o ( \sigma_n^4). $ Next,  we turn to $S_{n,1,1}$. Since $\bfx_i \bfx_i^\top = \bfSigma_1$ and $\bfx_i$ and $\bfx_j$ are independent for $i \neq j$, we obtain the representation
    \begin{align*}
         & \E \left[  \tr^2 \lb \bfSigma_1 \bfA_{1:(k-1)} \rb^2 \right] \\ 
        & = \sum_{i,j,l,m=1}^{k-1} \E \left[ \tr \lb \bfSigma_{1} \lb \bfx_i \bfx_i^\top - \bfSigma_1 \rb  \bfSigma_{1} \lb \bfx_j \bfx_j^\top - \bfSigma_1 \rb \rb \tr \lb \bfSigma_{1} \lb \bfx_l \bfx_l^\top - \bfSigma_1 \rb  \bfSigma_{1} \lb \bfx_m \bfx_m^\top - \bfSigma_1 \rb \rb \right] \\ 
        & = \sum_{i,j=1}^{k-1} \E \left[ \tr \lb \bfSigma_{1} \lb \bfx_i \bfx_i^\top - \bfSigma_1 \rb \rb^2 \tr \lb \bfSigma_{1} \lb \bfx_j \bfx_j^\top - \bfSigma_1 \rb \rb^2 \right] 
        \\ & \quad
        + 2 \sum_{i,j=1}^{k-1} \E \left[ \tr^2 \lb \bfSigma_{1} \lb \bfx_i \bfx_i^\top - \bfSigma_1 \rb  \bfSigma_{1} \lb \bfx_j \bfx_j^\top - \bfSigma_1 \rb \rb \right] .
    \end{align*}
   Next, we apply the Cauchy-Schwarz inequality for the Frobenius inner product as well as the fact $(a+b)^2 \lesssim a^2 +b^2$ for $a,b\geq 0$. This gives 
    \begin{align*}
         \E \left[  \tr^2 \lb \bfSigma_1 \bfA_{1:(k-1)} \rb^2 \right] 
        & \lesssim \sum_{i,j=1}^{k-1} \E \left[ \tr \lb \bfSigma_{1} \lb \bfx_i \bfx_i^\top - \bfSigma_1 \rb \rb^2 \tr \lb \bfSigma_{1} \lb \bfx_j \bfx_j^\top - \bfSigma_1 \rb \rb^2 \right] \\
        & =  \E \left[ \sum_{i=1}^{k-1} \tr \lb \bfSigma_{1} \lb \bfx_i \bfx_i^\top - \bfSigma_1 \rb \rb^2  \right]^2
        \\ & = \E \left[ \sum_{i=1}^{k-1} \tr \lb \lb \bfSigma_1 \bfx_i \bfx_i^\top \rb^2 - 2 \bfSigma_1 \bfx_i \bfx_i^\top \bfSigma_1 + \bfSigma_1^4  \rb  \right]^2
        \\ 
        & \lesssim 
        \E \left[ \sum_{i=1}^{k-1} \lb \bfx_i^\top \bfSigma_1 \bfx_i - \tr \bfSigma_1^2 \rb^2\right]^2
         + \E \left[ \sum_{i=1}^{k-1} \lb \bfx_i^\top \bfSigma_1^2 \bfx_i - \tr \bfSigma_1^3 \rb  \right]^2 \\ & \quad 
        + ( k - 1 ) ^2 \tr^4 \bfSigma_1^2 + (k - 1)^2 \tr^2 \bfSigma_1^3 \\
        & \lesssim (k-1)
        \E   \lb \bfx_1^\top \bfSigma_1 \bfx_1 - \tr \bfSigma_1^2 \rb^4
        + (k-1)^2 \left\{ \E   \lb \bfx_1^\top \bfSigma_1 \bfx_1 - \tr \bfSigma_1^2 \rb^2 \right\}^2 \\ & \quad 
         + (k - 1) \E   \lb \bfx_1^\top \bfSigma_1^2 \bfx_1 - \tr \bfSigma_1^3 \rb ^2 \\ & \quad 
        + ( k - 1 ) ^2 \tr^4 \bfSigma_1^2 + (k - 1)^2 \tr^2 \bfSigma_1^3 .
    \end{align*}
    We are now in the position to apply Lemma \ref{lem_quad_form_est}. This gives for the summands in $S_{n,1,1}$ 
 \begin{align*}
      \E \left[  \tr^2 \lb \bfSigma_1 \bfA_{1:(k-1)} \rb^2 \right]    
        & \lesssim (k - 1) \tr^2 \bfSigma_1^4 + (k - 1) o(1/p) \tr^4 \bfSigma_1^2 
        \\ & \quad + ( k - 1)^2 \lb  \tr \bfSigma_1^4 +  o(1/\sqrt{p}) \tr^2 \bfSigma_1^2 \rb^2  \\ 
        & \quad + (k-1) \tr \bfSigma_1^6 + (k-1) o(1/\sqrt{p}) \tr^2 \bfSigma_1^3 \\
        & \quad 
        + ( k - 1 ) ^2 \tr^4 \bfSigma_1^2 + (k - 1)^2 \tr^2 \bfSigma_1^4 \\ 
        & \lesssim ( k - 1 ) ^2 \tr^4 \bfSigma_1^2 + (k - 1)^2 \tr^2 \bfSigma_1^4 + (k -1) \tr^3 \bfSigma_1^2
       ,
 \end{align*}
 where we used the inequality $\tr\bfSigma_1^6 \leq \tr^3 \bfSigma^2$ for the last estimate.
    Assumption \ref{ass_cov} implies that the last term is of order 
    \begin{align*}
            ( k - 1 ) ^2 \tr^4 \bfSigma_1^2 + o \lb ( k - 1 ) ^2 \tr^4 \bfSigma_1^2 \rb .
    \end{align*}
    Thus, we get for $S_{n,1,1}$
    \begin{align*}
        S_{n,1,1} \lesssim \frac{1}{n_1^5} \tr^4 \bfSigma_1^2 + o \lb \frac{1}{n_1^5} \tr^4 \bfSigma_1^2 \rb .
    \end{align*}
    As a consequence of \eqref{eq_sigma_n_lower_bound}, we note that  
    \begin{align} \label{eq_bound_sigma_power4}
        \sigma_n^4 \gtrsim \frac{1}{n_1^4} \tr^4 \bfSigma_1^2 .
    \end{align}
    Thus, we conclude that  
    \begin{align} \label{eq_s_n11_result}
        S_{n,1,1}= o ( \sigma_n^4).
    \end{align}
    We now turn to the investigation of the second summand in \eqref{eq_est_t_4thmom}, that is,
    \begin{align*}
        S_{n,2} = \frac{1}{n_1^4} o \lb \frac{1}{p}\rb  \sum_{k=1}^{n_1}  \E \left[ \tr^4 \lb \bfSigma_1 \bfB_{1,k} \rb \right].
    \end{align*}
   By using the definition of $\mathbf{B}_{1,k}$ in \eqref{eq_def_B1}, we obtain
    \begin{align*}
        S_{n,2} & \lesssim S_{n,2,1} + S_{n,2,2}, 
    \end{align*}
    where
    \begin{align*}
        S_{n,2,1} & = \frac{1}{n_1^4 ( n_1-1)^4} o \lb \frac{1}{p}\rb  \sum_{k=1}^{n_1}  \E \left[ \tr^4 \lb \bfSigma_1 \bfA_{1:(k-1)} \rb \right], \\
        S_{n,2,2} & = \frac{1}{n_1^4} o \lb \frac{1}{p}\rb  \sum_{k=1}^{n_1}   \tr^4 \lb \bfSigma_1 \lb \bfSigma_1 - \bfSigma_2 \rb \rb .
    \end{align*}
    Similarly to $S_{n,1,2}$, it follows that $S_{n,2,2} = o ( \sigma_n^4). $ In the following, we use the basic fact that for centered and independent random variables $y_1, \ldots, y_m$ it holds
    \begin{align*}
        \E \left[ \lb \sum_{i=1}^m y_i \rb^4  \right] = \sum_{i=1}^m \E [y_i^4] + 3 \sum_{i \neq j} \E[y_{i}^2] \E[y_j^2].
    \end{align*}
    Combining this equality with Lemma \ref{lem_quad_form_est}, we get for the summands of $S_{n,2,1}$
    \begin{align*}
        & \E \left[ \tr^4 \lb \bfSigma_1 \bfA_{1:(k-1)} \rb \right] 
         = \E \left[ \sum_{i=1}^{k-1} \lb \bfx_i^\top \bfSigma_1 \bfx_i - \tr \bfSigma_1^2 \rb  \right]^4 \\ 
        & \lesssim  (k -1) \E  \lb \bfx_1^\top \bfSigma_1 \bfx_1 - \tr \bfSigma_1^2 \rb ^4
        + (k-1)^2 \left\{ \E  \lb \bfx_1^\top \bfSigma_1 \bfx_1 - \tr \bfSigma_1^2 \rb ^2 \right\}^2 \\ 
        & \lesssim (k-1) \tr^2 \bfSigma_1^4 
        + (k-1) o(1/p) \tr^4 \bfSigma_1^2 
        + (k-1)^2 \lb  \tr \bfSigma_1^4 +  o(1/\sqrt{p}) \tr^2 \bfSigma_1^2 \rb^2 \\
        & \lesssim (k-1)^2 \tr^2 \bfSigma_1^4 
        + (k-1)^2 o(1/p) \tr^4 \bfSigma_1^2 .
    \end{align*}
    Similarly to $S_{n,1,1}$, we obtain $S_{n,2,1} = o ( \sigma_n^4 ) . $
    The case $k > n_1$ can be treated similarly, and we omit the details for the sake of brevity. 
\end{proof}

\begin{proof}[Proof of Lemma \ref{lem_cov_var}] 
    To begin with, we show that 
    \begin{align} \label{eq_new_goal}
         \Var \lb \frac{8}{n_1^2} \sum_{k=1}^{n_1} \tr \lb \bfSigma_1 \bfB_{1,k} \rb^2 \rb ,
         \Var \lb \frac{8}{n_2^2} \sum_{k=n_1+1}^{n_1 + n_2} \tr \lb \bfSigma_2 \bfB_{2,k} \rb^2 \rb
         = o (\sigma_n^4). 
    \end{align}
    In the following, we are going to investigate the first term, and note that the second one can be handled similarly. 
    It follows from the definition of $\bfB_{1,k}$ in \eqref{eq_def_B1} that
    \begin{align}
        & \Var \lb \frac{1}{n_1^2} \sum_{k=1}^{n_1} \tr \lb \bfSigma_1 \bfB_{1,k} \rb^2 \rb 
       \nonumber \\ &  = \frac{1}{n_1^4}  
         \Var \lb \frac{1}{(n_1 - 1)^2} \sum_{k=1}^{n_1} \tr \lb \bfSigma_1 \bfA_{1:(k-1)} \rb^2 + \frac{2}{n_1 - 1} \sum_{k=1}^{n_1} \tr  \lb \bfSigma_1 \bfA_{1:(k-1)}  \lb \bfSigma_1 - \bfSigma_2\rb \rb   \rb 
         \nonumber \\ 
        & \lesssim  \frac{1}{n_1^8} \Var \lb  \sum_{k=1}^{n_1} \tr \lb \bfSigma_1 \bfA_{1:(k-1)} \rb^2 \rb 
        + \frac{1}{n_1^6 }\Var \lb \sum_{k=1}^{n_1} \tr  \lb \bfSigma_1 \bfA_{1:(k-1)}  \lb \bfSigma_1 - \bfSigma_2\rb \rb \rb =: V_{1,n}+ V_{2,n} ,
        \label{e4}
    \end{align}
    where $V_{1,n}$ and $V_{2,n}$ are defined in the obvious way. Thus, to prove \eqref{eq_new_goal}, it suffices to show that 
    \begin{align}
        \label{eq_goal_vn}
        V_{1,n}, V_{2,n} = o(\sigma_n^4). 
    \end{align}
    First, we analyze $V_{1,n}$ in \eqref{e4} and get 
    \begin{align}
    & \frac{1}{n_1^8} \Var \lb  \sum_{k=1}^{n_1} \tr \lb \bfSigma_1 \bfA_{1:(k-1)} \rb^2 \rb \nonumber \\ 
        & = \frac{1}{n_1^8} \sum_{k=1}^n \Var \lb   \tr \lb \bfSigma_1 \bfA_{1:(k-1)} \rb^2 \rb
        + \frac{2}{n_1^8} \sum_{1 \leq k<j \leq n_1} \cov \lb \tr \lb \bfSigma_1 \bfA_{1:(k-1)} \rb^2, \tr \lb \bfSigma_1 \bfA_{1:(j-1)} \rb^2 \rb \nonumber \\
        & = \frac{2}{n_1^8} \sum_{1 \leq k<j \leq n_1} \cov \lb \tr \lb \bfSigma_1 \bfA_{1:(k-1)} \rb^2, \tr \lb \bfSigma_1 \bfA_{1:(j-1)} \rb^2 \rb + o ( \sigma_n^4), \label{b2}
    \end{align}
    where the estimate follows from \eqref{eq_s_n11_result}.
    Using the definition of $\bfA_{1:(k-1)}$ in \eqref{eq_def_A_1k}, we obtain
    \begin{align}
        &  \sum_{1 \leq k<j \leq n_1} \cov \lb \tr \lb \bfSigma_1 \bfA_{1:(k-1)} \rb^2, \tr \lb \bfSigma_1 \bfA_{1:(j-1)} \rb^2 \rb \nonumber \\
        & = \sum_{1 \leq k<j \leq n_1} \sum_{m,m'=1}^{k-1} \sum_{l, l' =1}^{j-1} \cov \Big( \tr \left\{ \bfSigma_1 \lb \bfx_m \bfx_m^\top - \bfSigma_1 \rb \bfSigma_1 \lb \bfx_{m'} \bfx_{m'}^\top - \bfSigma_1 \rb    \right\}, \nonumber \\ 
        & \quad \quad \quad \quad ~ \quad \quad \quad \quad \quad \quad \quad \tr \left\{ \bfSigma_1 \lb \bfx_l \bfx_l^\top - \bfSigma_1 \rb \bfSigma_1 \lb \bfx_{l'} \bfx_{l'}^\top - \bfSigma_1 \rb    \right\} \Big) \nonumber \\
        & = 2 \sum_{1 \leq k<j \leq n_1} \sum_{m,l=1}^{k-1} \Var \lb \tr \left\{ \bfSigma_1 \lb \bfx_m \bfx_m^\top - \bfSigma_1 \rb \bfSigma_1 \lb \bfx_{l} \bfx_{l}^\top - \bfSigma_1 \rb    \right\} \rb \nonumber \\
        &= 2 \sum_{k=1}^{n_1} (n_1-k) (k-1) \Var \lb \tr \left\{ \bfSigma_1 \lb \bfx_1 \bfx_1^\top - \bfSigma_1 \rb     \right\}^2 \rb 
        \nonumber \\ & + 
       2  \sum_{k=1}^{n_1} (n_1-k) (k-1)^2 \Var \lb \tr \left\{ \bfSigma_1 \lb \bfx_1 \bfx_1^\top - \bfSigma_1 \rb \bfSigma_1 \lb \bfx_{2} \bfx_{2}^\top - \bfSigma_1 \rb    \right\} \rb \nonumber \\
        & = 2 n_1^3 \Var \lb \tr \left\{ \bfSigma_1 \lb \bfx_1 \bfx_1^\top - \bfSigma_1 \rb     \right\}^2 \rb 
         + 2
        n_1^4 \Var \lb \tr \left\{ \bfSigma_1 \lb \bfx_1 \bfx_1^\top - \bfSigma_1 \rb \bfSigma_1 \lb \bfx_{2} \bfx_{2}^\top - \bfSigma_1 \rb    \right\} \rb \nonumber \\
        & \lesssim n_1^3 \Var \lb \left\{ \bfx_1^\top \bfSigma_1 \bfx_1 \right\}^2 \rb 
        + n_1^3 \Var \lb  \bfx_1^\top \bfSigma_1^3 \bfx_1  \rb
        + n_1^4 \Var \lb \left\{ \bfx_1^\top \bfSigma_1 \bfx_2 \right\}^2 \rb 
        + n_1^4 \Var \lb  \bfx_1^\top \bfSigma_1^3 \bfx_1  \rb  \label{b_help}\\ 
        & \lesssim n_1^3 \Var \lb \left\{ \bfx_1^\top \bfSigma_1 \bfx_1 \right\}^2 \rb 
        + n_1^4 \Var \lb \left\{ \bfx_1^\top \bfSigma_1 \bfx_2 \right\}^2 \rb 
        + n_1^4 \Var \lb  \bfx_1^\top \bfSigma_1^3 \bfx_1  \rb,
        \label{b3}
    \end{align}
    where we used for \eqref{b_help} the crude upper bound $\Var (X+Y) \lesssim \Var(X) + \Var(Y)$ for two random variables $X,Y.$
    Thus, by \eqref{b2} and \eqref{b3}, it is left to show that 
    \begin{align}
        n_1 ^{-5} \Var \lb \left\{ \bfx_1^\top \bfSigma_1 \bfx_1 \right\}^2 \rb & = o ( \sigma_n^4), \label{b4}\\ 
        n_1^{-4} \Var \lb \left\{ \bfx_1^\top \bfSigma_1 \bfx_2 \right\}^2 \rb & = o ( \sigma_n^4), \label{b5}\\ 
        n_1^{-4} \Var \lb  \bfx_1^\top \bfSigma_1^3 \bfx_1  \rb & = o ( \sigma_n^4 ). \label{b6}
    \end{align}
    First, we have by Lemma \ref{lem_quad_form_est} and Lemma \ref{lem_quad_form_formula} \ref{lem_part1}
    \begin{align*}
        \Var \lb \left\{ \bfx_1^\top \bfSigma_1 \bfx_1 \right\}^2 \rb 
        & = \Var \lb \left\{ \bfx_1^\top \bfSigma_1 \bfx_1 - \tr\bfSigma_1^2\right\}^2 + \lb  2 \tr\bfSigma_1^2 \rb  \bfx_1^\top \bfSigma_1 \bfx_1 \rb \\
         & \lesssim \E \lb  \bfx_1^\top \bfSigma_1 \bfx_1 - \tr\bfSigma_1^2\rb^4 + \tr^2\bfSigma_1^2 \E \lb \bfx_1^\top \bfSigma_1 \bfx_1  - \tr\bfSigma_1^2 \rb^2 \\
         & \lesssim \tr^2 \bfSigma_1^4 + o(1/p) \tr^4 \bfSigma_1^2
         + \tr^2\bfSigma_1^2 \lb \tr \bfSigma_1^4 +\frac{1}{p} \tr^2 \bfSigma_1^2 \rb \\ 
         & = o \lb \tr ^4 \bfSigma_1^2\rb. 
    \end{align*}
    Using the bound $\sigma_n^4 \gtrsim n_1^{-4} \tr^4 \bfSigma_1^2$ from \eqref{eq_bound_sigma_power4}, the assertion \eqref{b4} follows. 

    For the term in \eqref{b5}, we use Lemma \ref{lem_quad_form_formula} \ref{lem_part5} and obtain
    \begin{align*}
        n_1^{-4} \Var \lb \left\{ \bfx_1^\top \bfSigma_1 \bfx_2 \right\}^2 \rb & \lesssim n_1^{-4} \E \lb  \bfx_1^\top \bfSigma_1 \bfx_2  \rb ^4 \lesssim n_1^{-4} \lb \tr^2 \bfSigma_1^4 + \tr \bfSigma_1^8 \rb \\ 
        & = o \lb n_1^{-4} \tr^4 \bfSigma_1^2 \rb, 
    \end{align*}
    which is \eqref{b5}. For the last term, we obtain from Lemma \ref{lem_quad_form_formula} \ref{lem_part1} and assumption \ref{ass_cov}
    \begin{align*}
        n_1^{-4} \Var \lb  \bfx_1^\top \bfSigma_1^3 \bfx_1  \rb 
        & = n_1^{-4} \E \lb  \bfx_1^\top \bfSigma_1^3 \bfx_1 - \tr\bfSigma_1^4 \rb^2 \\
        & \lesssim n_1^{-4} \lb \tr \bfSigma_1^8 + \frac{1}{p} \tr^2 \bfSigma_1^4 \rb 
        = o \lb n_1^{-4} \tr^4 \bfSigma_1^2 \rb.
    \end{align*}
    Thus, \eqref{b6} holds true, and  $V_{1,n} = o(\sigma_n^4).$
    To complete the proof of \eqref{eq_goal_vn}, we continue with the analysis of the second summand $V_{2,n}$ in \eqref{e4}.
    \begin{align*}
       V_{2,n} & = \frac{1}{n_1^6 }\Var \lb \sum_{k=1}^{n_1} \tr  \lb \bfSigma_1 \bfA_{1:(k-1)}  \lb \bfSigma_1 - \bfSigma_2\rb \rb \rb \\ 
       & = \frac{1}{n_1^6 } \sum_{k=1}^{n_1}  \Var \lb \tr  \lb \bfSigma_1 \bfA_{1:(k-1)}  \lb \bfSigma_1 - \bfSigma_2\rb \rb \rb \\ & 
       + \frac{2}{n_1^6 } \sum_{1 \leq k<j\leq n_1} \cov \lb \tr  \lb \bfSigma_1 \bfA_{1:(k-1)}  \lb \bfSigma_1 - \bfSigma_2\rb \rb , \tr  \lb \bfSigma_1 \bfA_{1:(j-1)}  \lb \bfSigma_1 - \bfSigma_2\rb \rb \rb \\
       & = \frac{1}{n_1^6 } \sum_{k=1}^{n_1} (k-1) \Var \lb \tr  \lb \bfSigma_1 \bfx_1 \bfx_1^\top  \lb \bfSigma_1 - \bfSigma_2\rb \rb \rb \\ & 
       + \frac{2}{n_1^6 }  \sum_{1 \leq k<j\leq n_1} (k-1) \Var \lb \tr  \lb \bfSigma_1 \bfx_1 \bfx_1^\top  \lb \bfSigma_1 - \bfSigma_2\rb \rb \rb \\
       & \lesssim \frac{1}{n_1^3}  \tr \lb \bfSigma_1^2 \lb \bfSigma_1 - \bfSigma_2\rb \rb^2
       + \frac{1}{n_1^3} o (1/p) \tr^2 \lb \bfSigma_1^2 \lb \bfSigma_1 - \bfSigma_2\rb \rb
       \\ 
       & \lesssim \frac{1}{n_1^3} \tr \lb \bfSigma_1^2 (\bfSigma_1 - \bfSigma_2) \rb^2 
       \lesssim \frac{1}{n_1^3} \tr \lb \bfSigma_1^2 \rb  \tr \lb \bfSigma_1 (\bfSigma_1 - \bfSigma_2) \rb^2 ,
    \end{align*}
    where we applied Lemma \ref{lem_quad_form_formula} \ref{lem_part1} for the third to last inequality, the fact $\tr^2 \bfC \leq p \tr \bfC^2 $ for any $\bfC\in \R^{p \times p}$ for the second to last inequality and the crude inequality $\tr \bfA \bfB \leq \tr \bfA \tr \bfB$ for positive definite matrices $\bfA, \bfB \in \R^{p\times p}$ in the last step.
    From \eqref{eq_def_sigma_n}, we have the lower bound
    \begin{align} \label{eq_ref_lower_bound_sigma}
      \sigma_n^4 \gtrsim \frac{1}{n_1^3} \tr^2   \lb \bfSigma_1^2 \rb  \tr \lb \bfSigma_1 (\bfSigma_1 - \bfSigma_2) \rb^2 .
    \end{align}
    Combining \eqref{eq_ref_lower_bound_sigma} and assumption \ref{ass_cov}, it follows that $V_{2,n} = o(\sigma_n^4)$, and the proof of \eqref{eq_goal_vn} concludes. This completes the proof of \eqref{eq_new_goal}.

Combining \eqref{eq_new_goal} with \eqref{a1} and \eqref{a10}, we find that
\begin{align} \label{q1}
      \E \left[  \sum_{k=1}^{n_1 + n_2}  \sigma_{n,k}^2 \right] - \sum_{k=1}^{n_1 + n_2}  \sigma_{n,k}^2
      = \operatorname{Rem} + o_{\PR}(\sigma_n^2),
\end{align}
where the remainder satisfies 
\begin{align}
   | \operatorname{Rem} | & \lesssim 
  \E \left[ \sum_{k=1}^{n_1} \frac{1}{n_1^4 p} \tr^2 \lb \bfSigma_1 \bfA_{1:(k-1)} \rb   \right]
   + \sum_{k=1}^{n_1} \frac{1}{n_1^4 p} \tr^2 \lb \bfSigma_1 \bfA_{1:(k-1)} \rb  \label{eq_first_line} \\ & 
   + \E \left[ \sum_{k=n_1 + 1}^{n_2} \frac{1}{n_2^2 p} \tr^2 \lb  \bfSigma_2 \bfC_{2,k} \rb  \right]
    +  \sum_{k=n_1 + 1}^{n_2} \frac{1}{n_2^2 p} \tr^2 \lb  \bfSigma_2 \bfC_{2,k} \rb ,\label{eq_second_line}
\end{align}
and we recall the definition of $\bfC_{2,k}= \bfC_2$ in \eqref{eq_def_C}.
For the terms in \eqref{eq_first_line}, we obtain the order $o_{\PR}(\sigma_n^2)$ from Lemma \ref{lem_tr_sq}. The terms in \eqref{eq_second_line} can be treated similarly, which implies that the right hand side of \eqref{q1} is of order $o_{\PR}(\sigma_n^2)$. 
\end{proof}

\subsubsection{Proof of \eqref{eq_remainder_negl}} \label{sec_proof_remainder}
 \begin{proof}[Proof of \eqref{eq_remainder_negl}]
    
We begin with the decomposition 

\begin{align*}
    T_n - \tilde{T}_n 
    & =  \sum_{i=1,2} \lb U_{n,i,2} + U_{n,i,3} \rb
    + V_{n,2} + V_{n,3}+ V_{n,4},
\end{align*}
where $U_{n,i,k}$ and $V_{n,k}$ stand for the $k$th summand appearing the definition of $U_{n,i}$ in \eqref{eq_def_U} and $V_n$ in \eqref{eq_def_V}. More precisely, we define for $i\in \{1,2\}$
\begin{align*}
     U_{n,i,2} & := -\frac{2}{(n_i)_3}\sum_{1 \leq j,k,l \leq n_i  }^{\star} \bfx_j^{(i)^\top}\bfx_k^{(i)}\bfx_k^{(i)^\top}\bfx_{l}^{(i)}   \quad  \\ 
       U_{n,i,3} & := \frac{1}{(n_i)_4}\sum_{1 \leq j, k , l, m \leq n_i }^{\star} \bfx_j^{(i)^\top}\bfx_k^{(i)}\bfx_{l}^{(i)^\top}\bfx_m^{(i)}, \\
       V_{n,2} &: =  - \frac{1}{n_1 n_2 (n_1 -1)} \sum_{1 \leq k,l \leq n_1}^\star \sum_{j=1}^{n_2} \bfx_l^{(1)^\top} \bfx_j^{(2)} \bfx_j^{(2)^\top} \bfx_k^{(1)}, \\
    V_{n,3} := &  -   \frac{1}{n_2 n_1 (n_2 -1)} \sum_{1 \leq k,l \leq n_2}^\star \sum_{j=1}^{n_1} \bfx_l^{(2)^\top} \bfx_j^{(1)} \bfx_j^{(1)^\top} \bfx_k^{(2)} , \\ 
    V_{n,4} & := \frac{1}{(n_1)_2 (n_2)_2} \sum_{1 \leq i,k \leq n_1}^\star \sum_{1 \leq j,l \leq n_2}^\star 
    \bfx_i^{(1)^\top} \bfx_j^{(2)} \bfx_k^{(1)^\top} \bfx_l^{(2)^\top} .
\end{align*}
Recalling that $\boldsymbol{\mu}_{n,1} =\boldsymbol{\mu}_{n,2} = \mathbf{0}, $ we have 
\begin{align*}
    \E [ U_{n,i,2}] = \E [ U_{n,i,3}] = \E [ V_{n,k}] = 0 , \quad i\in \{1,2\}, ~ k \in \{2,3,4\}. 
\end{align*}
Thus, it suffices to show that 
\begin{align}
    \Var ( U_{n,i,2}) , \Var (V_{2,n}), \Var(V_{n,3}) & = o (\sigma_n^2), \label{goal1}\\
    \Var ( U_{n,i,3}) , \Var (V_{n,4}) = o (\sigma_n^2). \label{goal2}
\end{align} 
For symmetry reasons, we may concentrate on the case $i=1$ and, to facilitate notation, we again write $\bfx_k^{(1)}=\bfx_k$ for $1 \leq k \leq n_1$. 
    Combining the identities in Lemma \ref{lem_quad_form_formula} \ref{lem_part2}-\ref{lem_part3} with assumption \ref{ass_model}, we obtain 
    \begin{align} 
        \Var ( U_{n,1, 2} ) 
       &  \lesssim \frac{1}{n_1^6} \sum_{1 \leq j,k,l \leq n_1  }^{\star}
        \sum_{1 \leq j',k',l' \leq n_1  }^{\star}
        \cov \lb  \bfx_j^\top \bfx_k \bfx_k^\top \bfx_l ,\bfx_{j'}^\top \bfx_{k'} \bfx_{k'}^\top \bfx_{l'} \rb 
       \label{eq_cov} \\& 
        \lesssim \frac{1}{n^2} \Var  \lb \bfx_1^{\top} \bfx_2 \bfx_2^\top \bfx_3 \rb
        = 
        \frac{1}{n_1^2} \left\{ \frac{\E \xi_{1,1}^4}{p(p+2)} \lb \tr^2 \bfSigma_{1}^2 + 2 \tr \bfSigma_{1}^4 \rb - \tr^2\bfSigma_1^2 \right\}  \nonumber \\
        & \lesssim   \frac{1}{n_1^2 p} \tr^2 \bfSigma_{n,1}^2 +  \frac{1}{n_1^2}\tr\bfSigma_1^4
        = o \lb \frac{1}{n_1^2} \tr^2 \bfSigma_{1}^2 \rb = o(\sigma_n^2).
        \label{eq_bound_un112}
    \end{align}
    Here, we note that covariance terms in \eqref{eq_cov} are zero if $j,l\notin \{j',l',k'\}$ or $j',l' \notin \{ j,l,k\}$, as the random vectors $\bfx_1,\ldots, \bfx_{n_1}$ are independent and centered. 
    For the last estimate in \eqref{eq_bound_un112}, we used the fact that $\sigma_n^2 \gtrsim \frac{1}{n_1^2} \tr^2 \bfSigma_{n,1}^2.$ We note that the terms $V_{n,2}$ and $V_{n,3}$ can be handled similarly as they also include a summation over three pairwise indices. Thus, the proof of \eqref{goal1} conludes, and it suffices to show \eqref{goal2} (the case of four pairwise indices). 
    For similar reasons as above, we restrict ourselves to the proof of  $\Var ( U_{n,1,3}) = o(\sigma_n^2)$ and note the analogue assertion for $V_{n,4}$ can be shown similarly.  
    Combining Lemma \ref{lem_quad_form_formula} \ref{lem_part4} with assumption \ref{ass_model}, we obtain
    \begin{align} \label{eq_cov2}
        \Var (U_{n,1,3}) & \lesssim 
        \frac{1}{n_1^8}\sum_{1 \leq j, k , l, m \leq n_i }^{\star} \sum_{1 \leq j', k' , l', m' \leq n_i }^{\star} 
    \cov \lb \bfx_j \bfx_k \bfx_{l} \bfx_m , \bfx_{j'} \bfx_{k'} \bfx_{l'} \bfx_{m'} \rb 
        \\
        & \lesssim  \frac{1}{n_1^4}   \E \lb \bfx_1^\top \bfx_2 \bfx_3^\top \bfx_4 \rb^2 
        = \frac{1}{n_1^4} \tr^2 \bfSigma_{n,1}^2 = o (\sigma_{n}^2),  \label{eq_bound_un113}
    \end{align}
    Here, we note that covariance terms in \eqref{eq_cov2} are zero if $|\{i,j,k,l,i',j',k',l\}|>4$, since the random vectors $\bfx_1, \ldots \bfx_{n_1}$ are independent and centered. 
    Also, we used $\sigma_n^2 \gtrsim \frac{1}{n_1^2} \tr^2 \bfSigma_{n,1}^2$ for the last estimate in \eqref{eq_bound_un113}.
    This concludes the proof of \eqref{eq_remainder_negl}.
 \end{proof}
 
\subsection{Proofs of Proposition \ref{prop_consistent_sigma} and Theorem \ref{thm_power}} \label{sec_proof_power_consistent}

\begin{proof}[Proof of Proposition \ref{prop_consistent_sigma}]
It is sufficient to show the first assertion of the theorem, as the second one is an immediate consequence. 
To this end, we assume $i=1$ for convenience.   We may again assume w.l.o.g. that $\boldsymbol{\mu}_i = \mathbf{0.}$ 
It is straightforward to show that $\frac{2}{n_1}U_{n,1}$ is an unbiased estimator for $\frac{2}{n_1} \tr \bfSigma_{n,i}^2$ for $i \in \{1,2\}$. Recall the decomposition $U_{n,1} = U_{n,1,1} + U_{n,1,2} + U_{n,1,3} $, where $U_{n,1,1}$ is defined in \eqref{def_u_ni1}.  From \eqref{eq_bound_un112} and \eqref{eq_bound_un113}, we know that 
\begin{align*}
    \Var (U_{n,1,2} ), \Var (U_{n,1,3} ) = o \lb  \tr^2 \bfSigma_1^2 \rb. 
\end{align*}
For the variance of $U_{n,1,1}$, we have
\begin{align}
    \Var ( U_{n,1,1} ) & \lesssim \frac{1}{n_1^2} \Var \lb \lb \bfx_1^\top \bfx_2\rb^2 \rb + \frac{1}{n_1} \cov \lb \lb \bfx_1^\top \bfx_2\rb^2, \lb \bfx_1^\top \bfx_3\rb^2\rb 
    \lesssim \frac{1}{n_1} \Var \lb \lb \bfx_1^\top \bfx_2\rb^2 \rb  \label{t1}
    .
\end{align}
We also note that, under assumption \ref{ass_model},
 $$ \E \xi_{1,1}^4 = p^2 + \tau_i p+ o(p).$$
For upper bound in \eqref{t1}, we obtain, using Lemma \ref{lem_quad_form_formula} \ref{lem_part5} and assumption \ref{ass_model},
\begin{align*}
   \frac{1}{n_1}  \Var \lb \lb \bfx_1^\top \bfx_2\rb^2 \rb 
    & \lesssim \frac{1}{n_1} \E \lb  \bfx_1^\top \bfx_2\rb^4
    = \frac{3}{n_1}  \lb \frac{\E \xi_{1,1}^4}{p(p+2)} \rb^2 \lb \tr^2 \bfSigma_1^2 + \frac{2}{n_1^2} \tr \bfSigma_1^4\rb 
     = \mathcal{O} \lb \frac{1}{n_1 } \tr^2 \bfSigma_1^2 \rb  \\ &  = o \lb  \tr^2 \bfSigma_1^2 \rb .
\end{align*}
This concludes the proof. 
\end{proof}

\begin{proof}[Proof of Theorem \ref{thm_power}]
 As a preparatory step, we show that 
\begin{align} \label{eq_bound_r}
     \frac{\tilde \sigma_n}{\sigma_n} \lesssim 1.
\end{align}
To this end, we will first verify that the variance $\sigma_n^2$ defined in \eqref{eq_def_sigma_n} satisfies
     \begin{align}
         \sigma_n^2 \geq \frac{4}{n_1^2} \tr^2 \lb \bfSigma_{n,1}^2\rb + \frac{4}{n_2^2} \tr^2 \lb \bfSigma_{n,2}^2\rb. \label{eq_sigma_n_lower_bound}
    \end{align}
 To see this, we first note that $\tr ^2 \bfA \leq p \tr \bfA^2$ for any $p \times p$ matrix $\bfA$ as a consequence from the Cauchy-Schwarz inequality. Using also that $\tau_i\geq 0,$ we obtain  
     \begin{align}
        &    \frac{8}{n_i} \tr  \left\{ \bfSigma_{n,i} \lb \bfSigma_{n,1} - \bfSigma_{n,2} \rb   \right\} ^2   
        + \frac{4 (\tau_i - 2) }{p n_i}  \tr^2 \lb \bfSigma_{n,i} \lb \bfSigma_{n,1} - \bfSigma_{n,2} \rb \rb 
      \nonumber   \\ & \geq \frac{8}{n_i} \tr  \left\{ \bfSigma_{n,i} \lb \bfSigma_{n,1} - \bfSigma_{n,2} \rb   \right\} ^2   
        - \frac{8  }{p n_i}  \tr^2 \lb \bfSigma_{n,i} \lb \bfSigma_{n,1} - \bfSigma_{n,2} \rb \rb 
        \geq 0. \label{eq_ineq_sigma_term_positive}
     \end{align}
     Then, \eqref{eq_sigma_n_lower_bound} follows from the definition of $\sigma_n^2$ in \eqref{eq_def_sigma_n}. 
   
    Let $k_n = n_1 / (n_1 + n_2). $ Using \eqref{eq_sigma_n_lower_bound}, it is not hard to show that (see p. 914 in \cite{lichen2012})
    \begin{align*}
        \frac{ \tilde\sigma_n }{\sigma_n} \leq \frac{1}{k_n ( 1-k_n)} \lesssim 1. 
    \end{align*}
    This concludes the proof of \eqref{eq_bound_r}.

    Now define $\mu_n = \| \bfSigma_{n,1} - \bfSigma_{n,2} \|_F^2.$  We decompose
    \begin{align*}
            \frac{1}{\hat\sigma_n}T_n
           & = \frac{1}{\hat\sigma_n} \lb T_n - \mu_n \rb + \frac{\mu_n}{\hat\sigma_n}
            = \frac{\tilde\sigma_n}{\hat\sigma_n} \frac{\sigma_n}{\tilde\sigma_n} \frac{1}{\sigma_n} (T_n - \mu_n) + \frac{\mu_n}{\tilde\sigma_n} \frac{\tilde\sigma_n}{\hat\sigma_n}
            = \frac{\mu_n}{\tilde\sigma_n} \frac{\tilde\sigma_n}{\hat\sigma_n} \lb \frac{\sigma_n}{\mu_n} \frac{1}{\sigma_n} (T_n - \mu_n) +1  \rb \\
            & = \frac{\mu_n}{\tilde\sigma_n}  \lb \frac{\sigma_n}{\mu_n} \mathcal{O}_{\PR}(1)  +1  \rb \mathcal{O}_{\PR}(1),
    \end{align*}
    where we used Proposition \ref{prop_consistent_sigma} and Theorem \ref{thm_conv_non_feasible} in the last step. 
    Since $\mu_n / \tilde \sigma_n \gtrsim \mu_n / \sigma_n \to \infty$ by \eqref{eq_bound_r} and our assumption, we have
    \begin{align*}
       \lim_{n\to\infty} \PR \lb\frac{1}{\hat\sigma_n}T_n > z_{1-\alpha} \rb = 1.  
    \end{align*}
\end{proof}

  \section{Background results} \label{sec_appendix_b}
We first collect formulas for mixed moments of bilinear forms in elliptical random vectors. 
\begin{lemma} \label{lem_quad_form_formula}
 Suppose that the $p-$dimensional random vectors $\bfx, \bfx_1, \ldots, \bfx_4$ are independent and satisfy \eqref{eq_em} with $\boldsymbol{\mu}=\mathbf{0}.$
 \begin{enumerate}[label=(\alph*)]
     \item \label{lem_part1}
     Let $\bfC_1, \bfC_2$ be deterministic symmetric $p\times p$ matrices. Then, we have
    \begin{align*}
        & \E \lb \bfx^\top  \bfC_1 \bfx - \tr \bfSigma \bfC_1 \rb \lb \bfx^\top  \bfC_2 \bfx - \tr \bfSigma \bfC_2 \rb \\ 
        & = \frac{\E [ \xi^4 ]}{p(p+2)} \lb \tr \bfSigma \bfC_1 \tr \bfSigma\bfC_2 
        +2 \tr \bfSigma \bfC_1 \bfSigma \bfC_2 \rb - \tr \bfSigma \bfC_1 \tr \bfSigma \bfC_2.
    \end{align*}
    \item \label{lem_part2} It holds
\begin{align*}
    \E \left[ \lb \bfx_1^{\top} \bfx_2 \bfx_2^\top \bfx_3 \rb^2 \right] 
    = \frac{\E \xi_{}^4}{p(p+2)} \lb \tr^2 \bfSigma^2 + 2 \tr \bfSigma^4 \rb. 
    \end{align*}
    \item \label{lem_part3} 
    It holds 
    \begin{align*}
        \E \left[ \lb \bfx_1^\top \bfx_2\rb^2\right] = \tr \bfSigma^2.
    \end{align*}
    \item \label{lem_part4} 
    It holds
    \begin{align*}
        \E \left[ \lb \bfx_1^\top \bfx_2 \bfx_3^\top \bfx_4 \rb^2 \right]  = \tr^2 \bfSigma^2. 
    \end{align*}
    \item \label{lem_part5} 
    It holds
    \begin{align*}
        \E \left[ \lb \bfx_1^\top \bfx_2 \rb ^4 \right] = 3 \lb \frac{\E [\xi^4]}{p(p+2)} \rb^2 \left\{  \tr^2 \lb \bfSigma^2 \rb + 2 \tr \lb \bfSigma^4\rb \right\}  .
    \end{align*}
 \end{enumerate}

\end{lemma}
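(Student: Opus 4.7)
The strategy is to build all five identities from a single computational backbone: the low-order moments of $\bfu^\top A \bfu$ when $\bfu$ is uniform on $S^{p-1}$. As a first step, I would record the three standard sphere identities
\begin{align*}
\E \lb \bfu^\top A \bfu \rb &= \frac{\tr A}{p}, \\
\E \lb \bfu^\top A \bfu \cdot \bfu^\top B \bfu \rb &= \frac{\tr(A)\tr(B) + 2 \tr(AB)}{p(p+2)}, \\
\E \left[ (\bfu^\top \mathbf{v})^4 \right] &= \frac{3}{p(p+2)},
\end{align*}
valid for symmetric $A,B \in \R^{p\times p}$ and any deterministic unit vector $\mathbf{v} \in \R^p$. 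These follow cleanly from the representation $\bfu = \bfz/\|\bfz\|$ with $\bfz \sim \mathcal{N}(\mathbf{0}, \bfI_p)$ together with known $\chi^2$ moments, or may be cited from standard references on Haar measure on the sphere.

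For part (a), I would substitute $\bfx = \xi \bfSigma^{1/2} \bfu$ so that $\bfx^\top \bfC_i \bfx = \xi^2 \bfu^\top \tilde{\bfC}_i \bfu$ with $\tilde{\bfC}_i := \bfSigma^{1/2} \bfC_i \bfSigma^{1/2}$, noting $\tr \tilde{\bfC}_i = \tr \bfSigma \bfC_i$ and $\tr(\tilde{\bfC}_1 \tilde{\bfC}_2) = \tr(\bfSigma \bfC_1 \bfSigma \bfC_2)$. Using independence of $\xi$ and $\bfu$ together with $\E \xi^2 = p$ gives $\E \bfx^\top \bfC_i \bfx = \tr \bfSigma \bfC_i$; expanding the centred product and invoking the second sphere identity then yields the stated formula.

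Part (c) is a one-line computation: $\E (\bfx_1^\top \bfx_2)^2 = \E \tr(\bfx_2 \bfx_2^\top \bfSigma) = \tr \bfSigma^2$. For (b), I would condition on $\bfx_2$, set $\mathbf{M} = \bfx_2 \bfx_2^\top$, and compute $\E_{\bfx_1, \bfx_3} [\bfx_1^\top \mathbf{M} \bfx_3 \bfx_3^\top \mathbf{M} \bfx_1] = \tr(\bfSigma \mathbf{M} \bfSigma \mathbf{M}) = (\bfx_2^\top \bfSigma \bfx_2)^2$, then apply (a) with $\bfC_1 = \bfC_2 = \bfSigma$ to the outer expectation (the centring and its cancellation are routine). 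For (d), the independence of the disjoint pairs $(\bfx_1, \bfx_2)$ and $(\bfx_3, \bfx_4)$ factors $(\bfx_1^\top \bfx_2 \bfx_3^\top \bfx_4)^2$ into two independent copies of the expression in (c). For (e), I would condition on $\bfx_2$ and write $\bfx_1^\top \bfx_2 = \sqrt{\bfx_2^\top \bfSigma \bfx_2} \cdot \xi_1 \bfu_1^\top \mathbf{v}$ with $\mathbf{v} := \bfSigma^{1/2} \bfx_2 / \| \bfSigma^{1/2} \bfx_2 \|$, then apply $\xi_1 \perp \bfu_1$ and the sphere fourth-moment identity to reduce the conditional expectation to $3 \E[\xi^4]/[p(p+2)] \cdot (\bfx_2^\top \bfSigma \bfx_2)^2$, and close with (a) for the outer expectation.

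The only nonroutine ingredient is the sphere fourth-moment identity needed for (e); once it is available, every remaining step is short algebra combining the two quadratic-form identities with the $(\xi,\bfu)$-independence. No genuine obstacle arises in transferring between $\bfx$ and $(\xi,\bfu)$: the normalization $\E \xi^2 = p$ built into assumption \ref{ass_model} is precisely what makes the $1/p$ factors from the sphere moments collapse into clean traces of $\bfSigma$.
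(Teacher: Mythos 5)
Your proof is correct, and it is genuinely different from the paper's: the paper disposes of Lemma~\ref{lem_quad_form_formula} by citation only (part~\ref{lem_part1} to Lemma~A.1 of \cite{hu_et_al_2019}, part~\ref{lem_part3} as ``a direct computation'', and parts~\ref{lem_part2},~\ref{lem_part4},~\ref{lem_part5} to Lemma~D.2 of \cite{wang2023bootstrap}), whereas you give a self-contained derivation of all five identities from the three standard sphere-moment formulas. Your sphere identities are correct (the fourth-moment one $\E[(\bfu^\top \mathbf{v})^4]=3/(p(p+2))$ is in fact a special case of the quadratic one with $A=B=\mathbf{v}\mathbf{v}^\top$, so even that is not a separate ingredient), the substitution $\bfx^\top\bfC_i\bfx=\xi^2\,\bfu^\top\bfSigma^{1/2}\bfC_i\bfSigma^{1/2}\bfu$ and the use of $\xi\perp\bfu$ in part~\ref{lem_part1} are exactly right, the reduction of part~\ref{lem_part2} to $\E[(\bfx_2^\top\bfSigma\bfx_2)^2]$ via conditioning on $\bfx_2$ and taking iterated expectations over $\bfx_3$ then $\bfx_1$ is clean, part~\ref{lem_part4} factors as claimed since the pairs $(\bfx_1,\bfx_2)$ and $(\bfx_3,\bfx_4)$ are independent, and in part~\ref{lem_part5} the conditional reduction to $(\bfx_2^\top\bfSigma\bfx_2)^2\cdot\E[\xi^4]\cdot 3/(p(p+2))$ followed by the expansion of $\E[(\bfx_2^\top\bfSigma\bfx_2)^2]$ from part~\ref{lem_part1} (adding back $(\tr\bfSigma^2)^2$ and noting $\E[\bfx_2^\top\bfSigma\bfx_2]=\tr\bfSigma^2$) reproduces the stated constant. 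What your route buys is transparency and independence from the cited lemmas; what the paper's route buys is brevity and reliance on results already vetted in the literature. Either is acceptable; yours is the more instructive.
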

Part \ref{lem_part1} is an immediate consequence of \cite[Lemma A.1]{hu_et_al_2019}, Part 
\ref{lem_part3} can be verified by a direct computation. Parts \ref{lem_part2}, \ref{lem_part4}, \ref{lem_part5} are given in \cite[Lemma D.2]{wang2023bootstrap}. 

The following lemma provides estimates for higher-order moment of centered quadratic forms in elliptical random vectors. 
\begin{lemma}[Lemma D.4 in \cite{wang2024testing}]
\label{lem_quad_form_est}
    Let $\bfC$ be a deterministic symmetric $p\times p$ matrix. Suppose that the $p-$dimensional random vector $\bfx$ satisfies \eqref{eq_em} with $\boldsymbol{\mu}=\mathbf{0}$ and $\xi$ satisfying condition \eqref{eq_ass_xi}. 
    Then, it holds for any $1 \leq q \leq 8,$
    \begin{align*}
        \E \lb \bfx^\top  \bfC \bfx - \tr \bfSigma \bfC  \rb^q
        \lesssim \tr^{q/2} \lb \bfC \bfSigma \rb^2 
        + o \lb \frac{1}{p^{q/4}} \rb \tr^q \lb  \bfC \bfSigma \rb.
    \end{align*}
\end{lemma}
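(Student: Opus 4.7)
The plan is to proceed via the stochastic representation $\bfx = \xi \bfSigma^{1/2}\bfu$ and separate the contributions of the scalar $\xi$ and the uniform direction $\bfu$ to the centered quadratic form. Set $\tilde{\bfC} = \bfSigma^{1/2}\bfC\bfSigma^{1/2}$ so that $\tr(\bfSigma\bfC) = \tr\tilde{\bfC}$ and $\bfx^\top\bfC\bfx = \xi^2 \bfu^\top\tilde{\bfC}\bfu$. Then I would write the additive decomposition
\begin{align*}
\bfx^\top\bfC\bfx - \tr(\bfSigma\bfC) = \xi^2\Big(\bfu^\top\tilde{\bfC}\bfu - \tfrac{1}{p}\tr\tilde{\bfC}\Big) + \tfrac{\xi^2 - p}{p}\tr\tilde{\bfC} .
\end{align*}
Applying the $c_q$-inequality and the independence of $\xi$ and $\bfu$, the $q$-th absolute moment is bounded by
\begin{align*}
\E[\xi^{2q}] \cdot \E\Big|\bfu^\top\tilde{\bfC}\bfu - \tfrac{1}{p}\tr\tilde{\bfC}\Big|^q + \tfrac{1}{p^q}\E|\xi^2 - p|^q \,(\tr\tilde{\bfC})^q .
\end{align*}

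For the second summand, I would invoke the moment condition $\E|(\xi^2-p)/\sqrt{p}|^{8} = o(p^2)$ from \eqref{eq_ass_xi}, i.e.\ $\E|\xi^2-p|^{8} = o(p^{6})$. Jensen's inequality then yields $\E|\xi^2-p|^q = o(p^{3q/4})$ for $1\leq q\leq 8$, and consequently this term contributes $o(p^{-q/4})\,\tr^q(\bfC\bfSigma)$, which matches the second term of the stated bound.

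The first summand is the main technical step, and this is where I expect the real work to lie. Here I would exploit the Gaussian representation $\bfu = \bfz/\|\bfz\|$ with $\bfz \sim \mathcal{N}(0,\bfI_p)$, and further decompose
\begin{align*}
\bfu^\top\tilde{\bfC}\bfu - \tfrac{1}{p}\tr\tilde{\bfC} = \tfrac{1}{\|\bfz\|^2}\bigl(\bfz^\top\tilde{\bfC}\bfz - \tr\tilde{\bfC}\bigr) + \tr\tilde{\bfC}\,\Bigl(\tfrac{1}{\|\bfz\|^2} - \tfrac{1}{p}\Bigr).
\end{align*}
Classical Gaussian quadratic form moment bounds (Hanson–Wright / Rosenthal-type inequalities) supply $\E|\bfz^\top\tilde{\bfC}\bfz - \tr\tilde{\bfC}|^q \lesssim \tr^{q/2}(\tilde{\bfC}^2)$, while $\|\bfz\|^2$ concentrates sharply around $p$ with Gaussian-type tails, so that $\E|p/\|\bfz\|^2 - 1|^q \lesssim p^{-q/2}$. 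The main obstacle is handling the random denominator $\|\bfz\|^2$ uniformly: I would split the expectation across the event $\{\|\bfz\|^2\geq p/2\}$ and its complement, using super-polynomially small probability bounds on the latter together with the trivial deterministic estimate $\bfu^\top\tilde{\bfC}\bfu \leq \|\tilde{\bfC}\|_{\rm op}$. Combining these bounds and using $\E[\xi^{2q}] = O(p^q)$ (which follows from $\E[\xi^2]=p$ and the variance / higher-moment conditions in \eqref{eq_ass_xi}), the first summand contributes at most $p^q \cdot p^{-q}\,\tr^{q/2}(\tilde{\bfC}^2) = \tr^{q/2}(\bfC\bfSigma)^2$, yielding the first term of the claimed bound. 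Summing the two contributions completes the argument.
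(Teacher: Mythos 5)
The paper does not contain its own proof of this lemma: it is imported verbatim as Lemma D.4 from \cite{wang2024testing}, so there is no argument in the present manuscript to compare against.

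That said, your outline is a plausible route to the statement, and the key ingredients are correctly identified: the additive decomposition separating the radial fluctuation $(\xi^2-p)\tr\tilde{\bfC}/p$ from the directional fluctuation $\xi^2(\bfu^\top\tilde{\bfC}\bfu - p^{-1}\tr\tilde{\bfC})$, the Lyapunov step turning the eighth-moment hypothesis on $\xi^2-p$ into $\E|\xi^2-p|^q = o(p^{3q/4})$, and the Gaussian representation $\bfu = \bfz/\|\bfz\|$ combined with Rosenthal-type bounds for $\bfz^\top\tilde{\bfC}\bfz - \tr\tilde{\bfC}$. Two points would need tightening in a full write-up. First, the final bookkeeping is slightly off: the Gaussian decomposition of $\bfu^\top\tilde{\bfC}\bfu - p^{-1}\tr\tilde{\bfC}$ feeds into \emph{both} terms of the claimed bound, not only the first. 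The piece $\|\bfz\|^{-2}(\bfz^\top\tilde{\bfC}\bfz-\tr\tilde{\bfC})$ gives $\lesssim p^{-q}\tr^{q/2}(\bfC\bfSigma)^2$, while the piece $\tr\tilde{\bfC}\,(\|\bfz\|^{-2}-p^{-1})$ contributes, after multiplying by $\E[\xi^{2q}]=O(p^q)$, something of order $p^{-q/2}\tr^q(\bfC\bfSigma)=o(p^{-q/4})\tr^q(\bfC\bfSigma)$, which lands in the second term. Second, on the complementary event $\{\|\bfz\|^2<p/2\}$ you cannot control the two Gaussian pieces separately (the term $\tr\tilde{\bfC}/\|\bfz\|^2$ is unbounded there); the crude bound $|\bfu^\top\tilde{\bfC}\bfu - p^{-1}\tr\tilde{\bfC}|\le 2\|\tilde{\bfC}\|_{\rm op}\le 2\,\tr^{1/2}(\bfC\bfSigma)^2$ must be applied to the undecomposed quantity before invoking the exponentially small tail probability. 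With those adjustments the plan goes through.
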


Next, we compute terms arising in the asymptotic variance of $T_n$ in the proof of Lemma \ref{lem_cov_mean}. To this end, recall the definition of $\bfA_{1:(k-1)}$ in \eqref{eq_def_A_1k}. 
\begin{lemma} \label{lem_tr_sq}
    Under the assumptions of Theorem \ref{thm_conv_non_feasible}, it holds
     $$ \E \left[ \frac{1}{n_1^4 } \sum_{k=1}^{n_1 } \lb \tr  \bfSigma_1 \bfA_{1: (k-1) }\rb ^2 \right]
     =  
     o \lb \frac{1}{n_1^2} \tr^2 \bfSigma_1^2 \rb. $$
\end{lemma}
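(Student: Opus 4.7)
The plan is to exploit independence to reduce the expectation to a single-vector variance, then apply the formula from Lemma \ref{lem_quad_form_formula}\ref{lem_part1} and invoke assumption \ref{ass_cov}.

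First I would rewrite the trace as a linear functional of the i.i.d.\ vectors. Since $\bfA_{1:(k-1)} = \sum_{i=1}^{k-1}(\bfx_i \bfx_i^\top - \bfSigma_1)$ and trace is linear,
\begin{align*}
\tr\bigl(\bfSigma_1 \bfA_{1:(k-1)}\bigr) = \sum_{i=1}^{k-1}\bigl(\bfx_i^\top \bfSigma_1 \bfx_i - \tr \bfSigma_1^2\bigr).
\end{align*}
The summands are centered and mutually independent, so the cross terms vanish upon squaring:
\begin{align*}
\E\bigl[\bigl(\tr\bfSigma_1 \bfA_{1:(k-1)}\bigr)^2\bigr] = (k-1)\, \Var\bigl(\bfx_1^\top \bfSigma_1 \bfx_1\bigr).
\end{align*}

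Next I would compute the per-vector variance using Lemma \ref{lem_quad_form_formula}\ref{lem_part1} with $\bfC_1 = \bfC_2 = \bfSigma_1$:
\begin{align*}
\Var\bigl(\bfx_1^\top \bfSigma_1 \bfx_1\bigr) = \Bigl(\tfrac{\E \xi_{1,1}^4}{p(p+2)}-1\Bigr)\tr^2\bfSigma_1^2 + \tfrac{2\E \xi_{1,1}^4}{p(p+2)}\tr\bfSigma_1^4.
\end{align*}
Under \ref{ass_model}, \eqref{eq_ass_xi} yields $\E \xi_{1,1}^4 = p^2 + \tau_1 p + o(p)$, so $\tfrac{\E \xi_{1,1}^4}{p(p+2)}-1 = \mathcal{O}(1/p)$ and $\tfrac{\E \xi_{1,1}^4}{p(p+2)} = 1 + o(1)$. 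Hence
\begin{align*}
\Var\bigl(\bfx_1^\top \bfSigma_1 \bfx_1\bigr) \lesssim \tfrac{1}{p}\tr^2\bfSigma_1^2 + \tr\bfSigma_1^4.
\end{align*}

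Finally I would sum over $k$. Using $\sum_{k=1}^{n_1}(k-1) \leq n_1^2$ gives
\begin{align*}
\E\!\left[\frac{1}{n_1^4}\sum_{k=1}^{n_1}\bigl(\tr\bfSigma_1 \bfA_{1:(k-1)}\bigr)^2\right] \lesssim \frac{1}{n_1^2}\left(\tfrac{1}{p}\tr^2\bfSigma_1^2 + \tr\bfSigma_1^4\right).
\end{align*}
Since $p \to \infty$ by \ref{ass_n_p} and $\tr\bfSigma_1^4 = o(\tr^2\bfSigma_1^2)$ by \ref{ass_cov} (taking $i=j=k=l=1$), both contributions are $o(\tfrac{1}{n_1^2}\tr^2\bfSigma_1^2)$, which is the claim. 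There is no real obstacle here; the proof is essentially bookkeeping, with assumption \ref{ass_cov} doing the qualitative work of ensuring that $\tr\bfSigma_1^4$ is negligible compared to $\tr^2\bfSigma_1^2$.
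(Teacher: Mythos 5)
Your proof is correct and follows essentially the same route as the paper: both expand $\tr(\bfSigma_1\bfA_{1:(k-1)})$ as a sum of i.i.d.\ centered quadratic forms, apply Lemma \ref{lem_quad_form_formula}\ref{lem_part1} to compute the per-vector variance, and conclude via $\E\xi_{1,1}^4 = p^2 + \tau_1 p + o(p)$ together with assumption \ref{ass_cov}. The only cosmetic difference is that you pass to $\lesssim$ bounds where the paper carries exact constants through, but the mathematical content is identical.
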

\begin{proof}[Proof of Lemma \ref{lem_tr_sq}]
    Using Lemma \ref{lem_quad_form_formula} \ref{lem_part1}, we obtain 
    \begin{align*}
         \E \left[  \lb \tr  \bfSigma_1 \bfA_{1: (k-1) }\rb ^2 \right] 
         &= \sum_{i=1}^{k-1} \E \lb \bfx_i^\top \bfSigma_1 \bfx_i - \tr \bfSigma_1^2 \rb ^2 \\
         &=  \sum_{i=1}^{k-1} \left\{ \frac{\E \xi_{1,1}^4}{p ( p +2)} \lb 2 \tr \bfSigma_1^4 +  \tr^2 \bfSigma_1^2 \rb  - \tr^2 \bfSigma_1^2 \right\} \\ 
         &= (k-1) \left\{ \frac{\E \xi_{1,1}^4}{p ( p +2)} \lb 2 \tr \bfSigma_1^4 +  \tr^2 \bfSigma_1^2 \rb  - \tr^2 \bfSigma_1^2 \right\}.
    \end{align*}
    Using $\sum_{k=1}^{n_1} (k-1) = (n_1-1) n_1 /2$, \ref{ass_model} and \ref{ass_cov}, this implies 
    \begin{align*}
        & \E \left[ \frac{1}{n_1^4 } \sum_{k=1}^{n_1 } \lb \tr  \bfSigma_1 \bfA_{1: (k-1) }\rb ^2 \right] \\ 
        & = \frac{1}{n_1^4 } \sum_{k=1}^{n_1 }(k-1) \left\{ \frac{\E \xi_{1,1}^4}{p ( p +2)} \lb 2 \tr \bfSigma_1^4 +  \tr^2 \bfSigma_1^2 \rb  - \tr^2 \bfSigma_1^2 \right\} \\
         & = \frac{(n_1 -1)}{2 n_1^3 }  \left\{ \frac{\E \xi_{1,1}^4}{p ( p +2)} \lb 2 \tr \bfSigma_1^4 +  \tr^2 \bfSigma_1^2 \rb  - \tr^2 \bfSigma_1^2 \right\} \\ 
         &  = \frac{(n_1 -1)}{2 n_1^3 }  \left\{
            (1+o(1)) \tr\bfSigma_1^4 + 
            \frac{\tau_1-2 +o(1)}{p+2} \tr^2 \bfSigma_1^2
         \right\}  \\ 
         & =   o \lb \frac{1}{n^2} \tr^2 \bfSigma_1^2 \rb.
    \end{align*}
\end{proof}

\begin{lemma} \label{lem_tr}
 Under the assumptions of Theorem \ref{thm_conv_non_feasible}, it holds for 
      $k \leq n_1,$ 
     $$  \E \left[ \frac{4}{n_1^2 (n_1 - 1)^2 } \sum_{k=1}^{n_1} \tr \lb \bfSigma_1 \bfA_{1: ( k-1 ) }\rb^2 \right] = \frac{2}{n_1^2} \tr^2 \bfSigma_1^2 + o \lb \frac{1}{n_1^2} \tr^2 \bfSigma_1^2 \rb,  $$
     and for $k> n_1$,
      \begin{align*}  
      & \E \left[ \frac{4}{n_2^2 (n_2 - 1)^2 } \sum_{k=n_1+1}^{n} \tr \lb \bfSigma_1 \bfA_{(n_1+1): ( k-1 ) }\rb^2 \right] \\ 
      & = \frac{2}{n_2^2} \tr^2 \bfSigma_2^2 + o \lb \frac{1}{n_2^2} \tr^2 \bfSigma_2^2 \rb .
      \end{align*}
      Moreover, we have
      \begin{align*}
 \E \left[ \frac{8}{ n_2^2 n_1^2 } \tr \lb \bfSigma_2 \bfA_{1:n_1} \rb ^2 \right] = \frac{8}{ n_2^2 n_1 } \tr^2 \bfSigma_1 \bfSigma_2 + o \lb \frac{1}{ n_2^2 n_1 } \tr^2 \bfSigma_1 \bfSigma_2\rb .  
      \end{align*}
\end{lemma}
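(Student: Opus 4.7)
The three assertions share a common structure, so the plan is to isolate a single core moment computation and then specialize. Throughout, let $\mathbf{M}$ be a deterministic symmetric $p\times p$ matrix and suppose $\bfA = \sum_{i=l'}^{l} \bfy_i$ where $\bfy_i = \bfx_i \bfx_i^\top - \bfSigma$ and the $\bfx_i$ are i.i.d.\ with covariance $\bfSigma$ drawn from the elliptical model. First I would expand
\begin{align*}
\tr \lb (\mathbf{M} \bfA)^2 \rb = \sum_{i,j=l'}^{l} \tr \lb \mathbf{M} \bfy_i \mathbf{M} \bfy_j \rb,
\end{align*}
and use independence together with $\E \bfy_i = \mathbf{0}$ to kill the off-diagonal terms. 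This reduces the calculation to $\E \tr(\mathbf{M} \bfA \mathbf{M} \bfA) = (l - l' + 1) \cdot \E \tr(\mathbf{M} \bfy_1 \mathbf{M} \bfy_1)$.

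Next I would compute the single-summand expectation. Expanding $\mathbf{M}\bfy_1\mathbf{M}\bfy_1$ and taking traces gives $(\bfx_1^\top \mathbf{M}\bfx_1)^2 - 2 \bfx_1^\top \mathbf{M}\bfSigma\mathbf{M}\bfx_1 + \tr(\mathbf{M}\bfSigma)^2$. Using $\bfx_1 = \xi \bfSigma^{1/2}\bfu$ and the sphere fourth-moment identity, Lemma \ref{lem_quad_form_formula}\ref{lem_part1} gives $\E(\bfx_1^\top \mathbf{M}\bfx_1)^2 = \frac{\E\xi^4}{p(p+2)} [\tr^2(\mathbf{M}\bfSigma) + 2\tr(\mathbf{M}\bfSigma)^2]$, while $\E \bfx_1^\top \mathbf{M}\bfSigma\mathbf{M}\bfx_1 = \tr(\mathbf{M}\bfSigma)^2$. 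After substitution the cross-term cancels and I obtain
\begin{align*}
\E \tr(\mathbf{M}\bfy_1\mathbf{M}\bfy_1) = \frac{\E\xi^4}{p(p+2)} \tr^2(\mathbf{M}\bfSigma) + \lb \frac{2\E\xi^4}{p(p+2)} - 1 \rb \tr(\mathbf{M}\bfSigma)^2,
\end{align*}
and by \eqref{eq_ass_xi} the prefactor $\E\xi^4/[p(p+2)] = 1 + O(1/p)$.

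The step I expect to require the most care is verifying that every piece other than $\tr^2(\mathbf{M}\bfSigma)$ is negligible, uniformly across the three pairings $(\mathbf{M},\bfSigma)\in\{(\bfSigma_1,\bfSigma_1),(\bfSigma_2,\bfSigma_2),(\bfSigma_2,\bfSigma_1)\}$. Assumption \ref{ass_cov} directly gives $\tr(\mathbf{M}\bfSigma)^2 = o(\tr^2(\mathbf{M}\bfSigma))$ in each case, and the $O(1/p)$ prefactor correction multiplied by $\tr^2(\mathbf{M}\bfSigma)$ is absorbed via the Cauchy--Schwarz bound
\begin{align*}
\tr^2(\mathbf{M}\bfSigma) = \tr^2(\bfSigma^{1/2}\mathbf{M}\bfSigma^{1/2}) \leq p \cdot \tr \lb (\bfSigma^{1/2}\mathbf{M}\bfSigma^{1/2})^2 \rb = p \cdot \tr(\mathbf{M}\bfSigma)^2,
\end{align*}
so that $p^{-1}\tr^2(\mathbf{M}\bfSigma) = o(\tr^2(\mathbf{M}\bfSigma))$. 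The cross-covariance case (3), $(\mathbf{M},\bfSigma)=(\bfSigma_2,\bfSigma_1)$, is the one where this Cauchy--Schwarz step is essential rather than trivial, and it is the main obstacle I foresee beyond routine algebra. Combining yields the master estimate $\E\tr(\mathbf{M}\bfA\mathbf{M}\bfA) = (l-l'+1) \tr^2(\mathbf{M}\bfSigma)(1+o(1))$.

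To finish I would specialize. For (1) and (2) I take $\mathbf{M}=\bfSigma=\bfSigma_i$, sum $\sum_{k=1}^{n_i}(k-1) = n_i(n_i-1)/2$ copies, and multiply by $4/[n_i^2(n_i-1)^2]$ to obtain $2\tr^2 \bfSigma_i^2 / n_i^2 (1+o(1))$, as stated. Statement (3) has no outer sum: the range $[1,n_1]$ provides exactly $n_1$ copies with $\mathbf{M}=\bfSigma_2$ and $\bfSigma=\bfSigma_1$, and multiplying by $8/(n_2^2 n_1^2)$ produces $8\tr^2(\bfSigma_1\bfSigma_2)/(n_2^2 n_1)(1+o(1))$, which is the claim.
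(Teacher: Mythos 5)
Your proof is correct and follows essentially the same route as the paper: expand $\tr(\mathbf{M}\bfA\mathbf{M}\bfA)$ using independence and zero mean of the $\bfy_i$ to kill off-diagonal terms, evaluate the single-summand expectation via Lemma~\ref{lem_quad_form_formula}\ref{lem_part1}, and invoke assumptions~\ref{ass_model} and~\ref{ass_cov} to isolate the leading $\tr^2(\mathbf{M}\bfSigma)$ contribution. The only notable difference is organizational: you work with a generic $(\mathbf{M},\bfSigma)$ and specialize at the end, whereas the paper carries out part (1) in full and dispatches parts (2)--(3) by appealing to symmetry and similarity; your unified treatment makes those claims explicit, which is arguably cleaner. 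Two minor remarks: the Cauchy--Schwarz detour is unnecessary since $p^{-1}\tr^2(\mathbf{M}\bfSigma) = o(\tr^2(\mathbf{M}\bfSigma))$ follows directly from $p\to\infty$, and ``the cross-term cancels'' is a slight overstatement (the $-2\tr(\mathbf{M}\bfSigma)^2$ and $+\tr(\mathbf{M}\bfSigma)^2$ combine to $-\tr(\mathbf{M}\bfSigma)^2$ rather than vanishing), though your final displayed formula is right.
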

\begin{proof}[Proof of Lemma \ref{lem_tr}]
We first use Lemma \ref{lem_quad_form_formula} \ref{lem_part1} and get
    \begin{align*}
         \E \left[ \tr \lb \bfSigma_1 \bfA_{1: ( k-1 ) }\rb^2  \right] 
          &= \sum_{i=1}^{k-1} \E \tr \lb \bfSigma_1 \lb \bfx_i \bfx_i^\top - \bfSigma_1 \rb \rb^2 \\
          &= \sum_{i=1}^{k-1} \E \left[ \lb \bfx_i^\top \bfSigma_1 \bfx_i \rb ^2 
          - 2 \bfx_i^\top \bfSigma_1^3 \bfx_i + \tr \bfSigma_1^4
          \right] \\
          &= \sum_{i=1}^{k-1} \E \left[ \lb \bfx_i^\top \bfSigma_1 \bfx_i \rb ^2 
           - \tr \bfSigma_1^4
          \right] \\ 
         & =  \sum_{i=1}^{k-1} \E \left[ \lb \bfx_i^\top \bfSigma_1  \bfx_i - \tr \bfSigma_1^2 \rb ^2 
          - \tr^2 \bfSigma_1^2 
          + 2 \lb \tr \bfSigma_1^2 \rb \bfx_i^\top \bfSigma_1 \bfx_i 
           - \tr \bfSigma_1^4
          \right] \\
            & =  \sum_{i=1}^{k-1} \E \left[ \lb \bfx_i^\top \bfSigma_1  \bfx_i - \tr \bfSigma_1^2 \rb ^2 
          + \tr^2 \bfSigma_1^2 
           - \tr \bfSigma_1^4
          \right] \\
          &= (k -1 ) \Big\{ \frac{\E \xi_{1,1}^4}{p(p+2)} 
          \left[ \tr^2 \bfSigma_1^2 + 2 \tr \bfSigma_1^4  \right] -  \tr \bfSigma_1^4 
          \Big\}.
    \end{align*}
    Using $\sum_{k=1}^{n_1} (k-1)= n_1 (n_1-1)/2$, \ref{ass_model} and \ref{ass_cov}, this implies
    \begin{align*}
        & \E \left[ \frac{4}{n_1^2 (n_1 - 1)^2 } \sum_{k=1}^{n_1} \tr \lb \bfSigma_1 \bfA_{1: ( k-1 ) }\rb^2 \right] \\ 
        & = \frac{2}{n_1 ( n_1 -1)} \Big\{ \frac{\E \xi_{1,1}^4}{p(p+2)} 
          \left[ \tr^2 \bfSigma_1^2 + 2 \tr \bfSigma_1^4  \right] -  \tr \bfSigma_1^4 
          \Big\} \\
          & =  \frac{2}{n_1^2} \tr^2 \bfSigma_1^2 + o \lb \frac{1}{n^2}  \tr^2 \bfSigma_1^2 \rb  .
    \end{align*}
      This shows the first assertion of the lemma. The second one follows for symmetry reasons. The third assertion follows very similar to the first one, and we do not repeat the arguments here for the sake of brevity. 
\end{proof}

\end{appendix}

%%%%%%%%%%%%%%%%%%%%%%%%%%%%%%%%%%%%%%%%%%%%%%
%% Acknowledgements                         %%
%% should be provided in the                %%
%% Acknowledgements section.                %%
%%%%%%%%%%%%%%%%%%%%%%%%%%%%%%%%%%%%%%%%%%%%%%
% \begin{acks}[Acknowledgments]
% The authors would like to thank the anonymous referees, an Associate
% Editor and the Editor for their constructive comments that improved the
% quality of this paper.
% \end{acks}

%%%%%%%%%%%%%%%%%%%%%%%%%%%%%%%%%%%%%%%%%%%%%%
%% Funding information, if any,             %%
%% should be provided in the                %%
%% funding section.                         %%
%%%%%%%%%%%%%%%%%%%%%%%%%%%%%%%%%%%%%%%%%%%%%%
\begin{funding}
The author was supported by the Aarhus University Research Foundation (AUFF), project numbers 47221 and 47388.
\end{funding}

\bibliographystyle{imsart-nameyear} % Style BST file (imsart-number.bst or imsart-nameyear.bst)
\bibliography{references.bib}       % Bibliography file (usually '*.bib')

%% or include bibliography directly:

\end{document}